\DeclareMathOperator{\ou}{ou}
\DeclareMathOperator{\Gal}{Gal}
\DeclareMathOperator{\codim}{codim}
\DeclareMathOperator{\premier}{premier}
\DeclareMathOperator{\eff}{eff}
\DeclareMathOperator{\supp}{supp}
\DeclareMathOperator{\PGCD}{pgcd}
\DeclareMathOperator{\primitifs}{primitifs}
\DeclareMathOperator{\Pic}{Pic}
\DeclareMathOperator{\card}{card}
\DeclareMathOperator{\mauvais}{mauvais}
\DeclareMathOperator{\bon}{bon}
\DeclareMathOperator{\lle}{le}
\DeclareMathOperator{\dapres}{d'apr\grave{e}s}
\DeclareMathOperator{\lemme}{lemme}
\DeclareMathOperator{\Vol}{Vol}
\newcommand{\CC}{\mathbf{C}}
\newcommand{\RR}{\mathbf{R}}
\newcommand{\ZZ}{\mathbf{Z}}
\newcommand{\NN}{\mathbf{N}}
\newcommand{\QQ}{\mathbf{Q}}
\newcommand{\xx}{\boldsymbol{x}}
\renewcommand{\ss}{\boldsymbol{s}}
\newcommand{\y}{\mathbf{y}}
\newcommand{\yy}{\boldsymbol{y}}
\newcommand{\zz}{\boldsymbol{z}}
\newcommand{\ww}{\boldsymbol{w}}
\renewcommand{\tt}{\boldsymbol{t}}
\newcommand{\jj}{\boldsymbol{j}}
\newcommand{\omeg}{\underline{\omega}_{\varepsilon}}
\newcommand{\poids}{\underline{w}_{\varepsilon}}
\newcommand{\poidsun}{\underline{w}_{\varepsilon,1}}
\newcommand{\poidsdeux}{\underline{w}_{\varepsilon,2}}
\newcommand{\cc}{\boldsymbol{c}}
\renewcommand{\aa}{\boldsymbol{a}}
\newcommand{\0}{\boldsymbol{0}}
\newcommand{\uu}{\boldsymbol{u}}
\newcommand{\vv}{\boldsymbol{v}}
\newcommand{\bb}{\boldsymbol{b}}
\newcommand{\ra}{\rightarrow}
\newcommand{\mt}{\mapsto}
\newcommand{\PP}{\mathbf{P}}
\newcommand{\OO}{\mathcal{O}}
 \newtheorem{thm}{Th\'eor\`eme}[section]
 \newtheorem{prop}[thm]{Proposition}
 \newtheorem{lemma}[thm]{Lemme}
 \newtheorem{Def}[thm]{D\'efinition}
  \newtheorem{rem}[thm]{Remarque}
\begin{document}

\title{\underline{POINTS DE HAUTEUR BORN\'EE} \\ \underline{ SUR LES HYPERSURFACES } \\
\underline{DE $ \PP^{n}\times \PP^{n}\times\PP^{n} $}}

\author{Teddy Mignot}

\maketitle

\begin{abstract}
Nous \'etablissons ici une formule asymptotique pour le nombre de points de hauteur born\'ee sur l'hypersurface de l'espace triprojectif $ \PP^{n}_{\QQ}\times \PP^{n}_{\QQ}\times \PP^{n}_{\QQ} $ d\'efinie par l'\'equation $ x_{0}y_{0}z_{0}+...+x_{n}y_{n}z_{n}=0 $. Nous verrons que le r\'esultat obtenu est en accord avec la conjecture de Batyrev-Manin. La m\'ethode utilis\'ee est essentiellement celle d\'evelopp\'ee par Heath-Brown dans \cite{HB1} et reprise par Robbiani dans \cite{R} pour \'evaluer le nombre de points rationnels sur une hypersurface de l'espace biprojectif.

\end{abstract}

\tableofcontents

\section{Introduction}

Suite \`a la parution de la pr\'ec\'edente version de cet article sous le num\'ero arXiv : 1402.5531v1, j'ai appris que le r\'esultat principal (\`a savoir la proposition \ref{conclusion}) avait d\'ej\`a \'et\'e obtenu auparavant dans un cadre beaucoup plus g\'en\'eral par Blomer et Br\"{u}dern dans \cite{BB}. En effet, dans cet article, Blomer et Br\"{u}dern \'etablissent une formule asymptotique pour le nombre depoints de hauteur born\'ee pour les hypersurfaces d'espaces multiprojectifs d\'efinies par des \'equations diagonales, donc en particulier pour l'hypersurface consid\'er\'ee ici. Aussi ai-je d\'ecid\'e de pr\'esenter cette nouvelle version de mon article dans laquelle je cite cette nouvelle r\'ef\'erence.\\
   
Etant donn\'e un entier $ n\geqslant 2 $, on consid\`ere l'hypersurface $ X $ de l'espace $ \PP^{n}_{\QQ}\times \PP^{n}_{\QQ}\times\PP^{n}_{\QQ}  $ d\'efinie par l'\'equation $ F(\xx,\yy,\zz)=0 $ o\`u  \[ F(\xx,\yy,\zz)=x_{0}y_{0}z_{0}+...+x_{n}y_{n}z_{n}, \] avec $ (\xx,\yy,\zz)=((x_{0}:...:x_{n}),(y_{0}:...:y_{n}),(z_{0}:...:z_{n})) \in \PP^{n}_{\QQ}\times \PP^{n}_{\QQ}\times \PP^{n}_{\QQ} $.  
On dira que $ (x_{0},...,x_{n})\in \ZZ^{n+1} $ est \emph{primitif} si $ \PGCD(x_{0},...,x_{n})=1 $
Dans tout ce qui va suivre, on note pour tout $ (\xx,\yy,\zz) \in \PP^{n}_{\ZZ}\times \PP^{n}_{\ZZ}\times \PP^{n}_{\ZZ} $ : \nomenclature{$ H(\xx,\yy,\zz) $ }{hauteur} \[ H(\xx,\yy,\zz)=\max_{0\leqslant i \leqslant n}|x_{i}| \max_{ 0\leqslant j \leqslant n} |y_{j}|\max_{0\leqslant k\leqslant n}|z_{k}|,  \]o\`u $ (x_{0},...,x_{n}), \; (y_{0},...,y_{n}),(z_{0},...,z_{n}) \in \ZZ^{n+1} $ sont primitifs et tels que \linebreak  $ (\xx,\yy,\zz)=((x_{0}:...:x_{n}),(y_{0}:...:y_{n}), \;(z_{0}:...:z_{n})) $.
On souhaite d\'eterminer une formule asymptotique pour le nombre de points de l'hypersurface $ X $ de l'espace triprojectif $ \PP^{n}_{\QQ}\times \PP^{n}_{\QQ}\times\PP^{n}_{\QQ} $  de hauteur born\'ee par $ B $, ce qui revient \`a \'evaluer \nomenclature{$ \mathcal{N}(B) $}{nombre de points primitifs} \begin{multline*} \mathcal{N}(B) =\card\{(\xx,\yy,\zz)\in \ZZ_{0}^{n+1}\times \ZZ_{0}^{n+1}\times \ZZ_{0}^{n+1} \; | \; \\ (x_{0},...,x_{n}), (y_{0},...,y_{n}), (z_{0},...,z_{n}) \; \primitifs \;,  \xx.\yy.\zz =0, \; H(\xx,\yy,\zz)\leqslant B\}, \end{multline*} o\`u l'on a not\'e \nomenclature{ $ \xx.\yy.\zz $ }{ $ =\sum_{i=0}^{n}x_{i}y_{i}z_{i} $ } $ \xx.\yy.\zz=x_{0}y_{0}z_{0}+...+x_{n}y_{n}z_{n} $ et $ \ZZ_{0}=\ZZ\setminus \{0\} $. Par une inversion de M\"{o}bius, on se ram\`ene au calcul de \nomenclature{$ N(B) $}{nombre de points}\begin{multline*} N(B)= \card\{(\xx,\yy,\zz)\in \ZZ_{0}^{n+1}\times \ZZ_{0}^{n+1}\times \ZZ_{0}^{n+1} \\ \; | \; x_{\max}>0, \; y_{\max}>0, \;  \xx.\yy.\zz=0, \;H(\xx,\yy,\zz) \leqslant B\},  \end{multline*} o\`u l'indice \og $ \max $ \fg \ indique la coordonn\'ee dont la valeur absolue est maximale. Nous allons \'evaluer ce nombre $ N(B) $ en suivant la m\'ethode d\'ecrite par Robbiani (cf. \cite{R}), qui elle m\^eme s'inspire de la m\'ethode de Heath-Brown (cf. \cite{HB1}), pour finalement arriver au r\'esultat principal, \`a savoir la proposition \ref{conclusion}, qui correspond exactement \`a la conjecture de Batyrev-Manin raffin\'ee dans ce cas. 

\section{La m\'ethode de Heath-Brown}

On introduit les notations suivantes : \nomenclature{$ \omega_{0} $}{fonction poids}
\[ \omega_{0}(x)=\left\{ \begin{array}{lcr} \exp(-(1-x^{2})^{-1}) & \mbox{si} & |x|<1 \\ 0 & \mbox{si} & |x| \geqslant 1 \end{array} \right. \]
\[ c_{0}=\int_{-\infty}^{+\infty}\omega_{0}(x)dx,\] \[ \omega(x)=4c_{0}^{-1}\omega_{0}(4x-3) \] \nomenclature{$ h(x,y) $}{fonction d'Iwaniec}
\[h(x,y)=\sum_{j>0}\frac{1}{xj}\left(\omega(xj)-\omega\left(\frac{|y|}{xj}\right)\right),\]  pour tout $ (x,y)\in \mathopen]0,+\infty\mathopen[ \times \RR $
\begin{rem}\label{rem2}
La fonction r\'eelle $ h $ est infiniment diff\'erentiable et $ h(x,y) $ est nul lorsque $ x> \max(1,2|y|) $. On a par ailleurs, pour tout $ y \in \RR $ (cf. \textup{\cite[lemme 4]{HB1}}): \[h(x,y) \ll \frac{1}{x} \] 
\end{rem}
La m\'ethode de Heath-Brown inspir\'ee de la m\'ethode du cercle classique, permet de donner une approximation du nombre de points $ \xx \in \ZZ^{n+1} $ de hauteur born\'ee v\'erifiant une \'equation $ G(\xx)=0 $, $ G $ \'etant un polyn\^ome en $ n+1 $ variables \`a coefficients entiers. Cette m\'ethode consiste, dans un premier temps, \`a consid\'erer une fonction lisse \`a support inclus dans un compact $ w : \RR^{n+1} \ra \RR $ bien choisie et \`a  estimer  \[ N(G,w)=\sum_{\xx \in \ZZ^{n+1}, \;  G(\xx)=0}w(\xx)=\sum_{\xx \in \ZZ^{n+1}}w(\xx)\delta_{G(\xx)},\] o\`u l'on a pos\'e $ \delta_{l}=\left\{ \begin{array}{lcr} 1 & \mbox{si} & l=0 \\ 0 & \mbox{si} & l\neq 0 \end{array} \right. $ pour tout entier $ l $. 
On utilise alors le th\'eor\`eme suivant : 
\begin{thm}[Duke, Friedlander, Iwaniec]
Pour tout $ l \in \ZZ $ et pour tout $ Q>1 $, il existe une constante \nomenclature{$ c_{Q} $}{$ =1+O_{N}(Q^{-N}) $} $ c_{Q}=1+O_{N}(Q^{-N}) $ pour tout $ N>0 $, telle que : \[ \delta_{l}=c_{Q}Q^{-2}\sum_{q=1}^{\infty}\sum_{d\in (\ZZ/q\ZZ)^{\ast}}e_{q}(dl)h\left(\frac{q}{Q},\frac{l}{Q^{2}}\right), \] o\`u l'on note $ e(x)=\exp(2i\pi x) $ et $ e_{q}(x)=e\left(\frac{ x}{q}\right) $.
\end{thm}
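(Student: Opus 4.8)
\emph{Plan de démonstration.} Il s'agit de l'identité du \og $\delta$-symbole \fg\ de Duke, Friedlander et Iwaniec, et le plan est de reprendre l'argument de Heath-Brown (\cite{HB1}). On commencerait par poser
\[ c_Q = \left(Q^{-1}\sum_{q=1}^{\infty}\omega\!\left(\frac{q}{Q}\right)\right)^{\!-1}, \]
expression licite pour $Q>1$ puisque $\omega$ est strictement positive sur $\mathopen]1/2,1\mathopen[$, intervalle qui contient toujours un réel de la forme $q/Q$ avec $q\in\NN^{\ast}$. Comme $\omega$ est lisse, à support compact dans $\mathopen]0,+\infty\mathopen[$ et vérifie $\int_{\RR}\omega(x)\,dx = 1$ (calcul immédiat à partir des définitions de $\omega_{0}$, $c_{0}$ et $\omega$), la formule sommatoire de Poisson donnerait $\sum_{q\geqslant 1}\omega(q/Q) = Q\int_{\RR}\omega(x)\,dx + O_{N}(Q^{-N}) = Q + O_{N}(Q^{-N})$ pour tout $N>0$, d'où $c_Q = 1 + O_{N}(Q^{-N})$ ; c'est le seul ingrédient analytique, et il est standard. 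On observerait au passage, grâce à la remarque \ref{rem2}, que $h(q/Q,l/Q^{2})$ s'annule dès que $q>\max(Q,2|l|/Q)$, de sorte que toutes les sommes qui suivent ne comportent qu'un nombre fini de termes non nuls.

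On développerait ensuite le membre de droite. En écrivant $h(q/Q,l/Q^{2}) = \sum_{j\geqslant 1}\frac{Q}{qj}\bigl(\omega(qj/Q) - \omega(|l|/(qjQ))\bigr)$ et en notant $c_q(l) = \sum_{d\in(\ZZ/q\ZZ)^{\ast}}e_q(dl)$ la somme de Ramanujan, ce membre de droite devient
\[ c_Q Q^{-1}\sum_{q,j\geqslant 1}\frac{c_q(l)}{qj}\left(\omega\!\left(\frac{qj}{Q}\right) - \omega\!\left(\frac{|l|}{qjQ}\right)\right). \]
L'idée centrale est de regrouper les termes selon l'entier $r = qj$ : le facteur $\frac1r\bigl(\omega(r/Q) - \omega(|l|/(rQ))\bigr)$ ne dépend que de $r$, et l'on invoquerait l'identité élémentaire $\sum_{q\mid r}c_q(l) = r\,\mathbf{1}_{\{r\mid l\}}$ (conséquence de ce que les fractions réduites $a/q$ de dénominateur $q\mid r$ décrivent exactement l'ensemble $\{b/r\ :\ 0\leqslant b<r\}$ modulo $1$, puis de $\sum_{b\bmod r}e_r(bl) = r\,\mathbf{1}_{\{r\mid l\}}$). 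On aboutirait à
\[ c_Q Q^{-1}\sum_{r\mid l}\left(\omega\!\left(\frac{r}{Q}\right) - \omega\!\left(\frac{|l|}{rQ}\right)\right). \]

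Il ne resterait plus qu'à conclure par disjonction de cas. Si $l\neq 0$, l'application $r\mapsto |l|/r$ est une involution de l'ensemble fini des diviseurs positifs de $|l|$, d'où $\sum_{r\mid l}\omega(|l|/(rQ)) = \sum_{r\mid l}\omega(r/Q)$ : la somme précédente est nulle, ce qui est bien $\delta_l = 0$. Si $l = 0$, la condition \og $r\mid l$ \fg\ est toujours vérifiée et $\omega(|l|/(rQ)) = \omega(0) = 0$, si bien que l'expression vaut $c_Q Q^{-1}\sum_{q\geqslant 1}\omega(q/Q) = 1 = \delta_0$ par le choix même de $c_Q$. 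Comme $c_Q$ ne dépend que de $Q$, ceci fournirait la constante annoncée.

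L'obstacle principal n'est pas de nature analytique : c'est l'articulation entre l'identité sur les sommes de Ramanujan et la simplification produite par l'involution $r\mapsto|l|/r$ qui forme le cœur de l'argument — c'est elle qui impose la forme précise de la fonction $h$ et qui fait \og télescoper \fg\ la double somme en $q$ et $j$. Il faudrait par ailleurs prendre soin, à chaque interversion de sommations, de justifier qu'elle est licite, ce qui ne pose aucune difficulté ici puisque, comme noté ci-dessus, toutes les sommes en jeu sont en réalité finies.
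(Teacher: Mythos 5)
Votre d\'emonstration est correcte. Notez d'abord que le pr\'esent article ne d\'emontre pas ce th\'eor\`eme : il est cit\'e comme r\'esultat connu avec renvoi \`a \cite{HB1}, il n'y a donc pas de preuve interne \`a laquelle comparer la v\^otre. Celle que vous proposez reprend exactement les ingr\'edients de la preuve de \cite[Th\'eor\`eme 1]{HB1} --- regroupement selon $r=qj$, identit\'e $\sum_{q\mid r}c_{q}(l)=r\,\mathbf{1}_{\{r\mid l\}}$ sur les sommes de Ramanujan, involution $r\mapsto |l|/r$ pour $l\neq 0$, normalisation de $c_{Q}$ par la formule sommatoire de Poisson --- que vous exposez \og \`a rebours \fg\ (en v\'erifiant l'identit\'e plut\^ot qu'en la construisant \`a partir de l'identit\'e de divisibilit\'e de Duke--Friedlander--Iwaniec), ce qui est licite puisque, comme vous le notez, toutes les sommes en jeu sont finies.

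Une seule impr\'ecision, mineure mais r\'eelle : l'affirmation selon laquelle l'intervalle $\mathopen]1/2,1\mathopen[$ contient toujours un r\'eel de la forme $q/Q$ avec $q\in\NN^{\ast}$ est fausse pour $Q=2$ (il faudrait un entier strictement compris entre $1$ et $2$) ; en ce point on a $\sum_{q\geqslant 1}\omega(q/Q)=0$ et votre $c_{Q}$ n'est pas d\'efini. C'est un cas d\'eg\'en\'er\'e isol\'e --- pour $Q>2$ l'intervalle $\mathopen]Q/2,Q\mathopen[$ est de longueur $>1$ et contient donc un entier, et pour $1<Q<2$ il contient $1$ --- sans aucune incidence sur l'application faite ici o\`u $Q=\sqrt{B}\ra\infty$, mais pour obtenir l'\'enonc\'e tel quel (\og pour tout $Q>1$ \fg) il faudrait soit exclure ce point, soit le contourner, par exemple en perturbant l\'eg\`erement le support de $\omega$.
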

Ce th\'eor\`eme appliqu\'e \`a $ \delta_{G(\xx)} $ donne : \[ N(G,w)=\sum_{\xx\in \ZZ^{n+1}}c_{Q}Q^{-2}\sum_{q=1}^{\infty}\sum_{d\in (\ZZ/q\ZZ)^{\ast}}w(\xx)e_{q}(dG(\xx))h\left(\frac{q}{Q},\frac{G(\xx)}{Q^{2}}\right), \] pour toute constante $ Q>1 $. En utilisant une formule de Poisson en dimension $ n+1 $, Heath-Brown en d\'eduit la relation ci-dessous (cf. \cite[Th\'eor\`eme 2]{HB1}) :
\begin{equation}\label{equation}
N(G,w)=\sum_{\cc \in \ZZ^{n+1}}\sum_{q=1}^{\infty}q^{-n-1}S_{q}(\cc)I_{q}(\cc), 
\end{equation} 
o\`u \nomenclature{$ S_{q}(\cc) $}{Somme singuli\`ere}\begin{equation} S_{q}(\cc)=\sum_{d\in (\ZZ/q\ZZ)^{\ast}}\sum_{\bb\in (\ZZ/q\ZZ)^{n+1}}e_{q}(dG(\bb)+\cc.\bb), \end{equation}
et \nomenclature{$ I_{q}(\cc) $}{Int\'egrale singuli\`ere}\begin{equation} I_{q}(\cc)=\int_{\RR^{n+1}}c_{Q}Q^{-2}w(\xx)h\left(\frac{q}{Q},\frac{G(\xx)}{Q^{2}}\right)e_{q}(-\cc.\xx)d\xx. \end{equation}

Nous allons appliquer la m\'ethode de Heath-Brown \`a l'\'etude du nombre de points $ (\xx,\yy,\zz)\in \ZZ^{n+1}\times \ZZ^{n+1} \times \ZZ^{n+1} $ tels que $ \xx.\yy.\zz=0 $ et $ H(\xx,\yy,\zz)\leqslant B $. Nous verrons alors que $ N(B) $ est du type $ CB^{n}\log(B)^{2}+o(B^{n}\log(B)^{2}) $, o\`u $ C $ est une constante que nous pr\'eciserons. \\

Remarquons dans un premier temps que la condition $ H(\xx,\yy,\zz)\leqslant B $ implique que l'on a $ \max_{i}|x_{i}|\max_{j}|y_{j}|\leqslant B^{\frac{2}{3}} $ ou $ \max_{j}|y_{j}|\max_{k}|z_{k}|\leqslant B^{\frac{2}{3}} $ ou $ \max_{i}|x_{i}|\max_{k}|z_{k}|\leqslant B^{\frac{2}{3}} $ (en effet si ce n'\'etait pas le cas, la condition $ H(\xx,\yy,\zz)\leqslant B $ donnerait $ \max_{i}|x_{i}|\leqslant B^{\frac{1}{3}} $, $ \max_{j}|y_{j}|\leqslant B^{\frac{1}{3}} $ et $ \max_{k}|z_{k}|\leqslant B^{\frac{1}{3}} $ et on obtient alors une contradiction). Nous nous int\'eresserons donc exclusivement aux points $ (\xx,\yy,\zz) \in X $ v\'erifiant l'une (au moins) de ces trois conditions. Nous montrerons par la suite, dans la section \ref{dernieresection} que le nombre de points contenus dans l'intersection des domaines ainsi d\'efinis est du type $ O(B\log(B)) $, donc n\'egligeable par rapport au terme principal $ CB\log(B)^{2} $. Remarquons par ailleurs qu'une telle condition implique n\'ecessairement que l'une (au moins) des trois coordonn\'ees $ \max_{i}x_{i} $, $ \max_{j}y_{j} $, $ \max_{k}z_{k} $ est inf\'erieure ou \'egale \`a $ B^{\frac{1}{3}} $. Pour simplifier les notations, nous consid\'ererons dans un premier temps les points tels que $ \max_{i}|x_{i}|\max_{j}|y_{j}|\leqslant B^{\frac{2}{3}}$ et $ \max_{i}|x_{i}|\leqslant B^{\frac{1}{3}} $ et on supposera par ailleurs\nomenclature{$ i_{0},j_{0} $}{ indices des maxima pour $ \xx,\yy $} $ x_{i_{0}}=\max_{0\leqslant i\leqslant n}|x_{i}| $ et $ y_{j_{0}}=\max_{0\leqslant j \leqslant n}|y_{j}| $, pour $ i_{0},j_{0} \in \{0,...,n\} $ fix\'es. 
Nous allons \`a pr\'esent introduire des fonctions fonctions particuli\`eres dites fonctions poids traduisant ces conditions : \\
\begin{Def}
On appelle \emph{fonction poids sur $ \RR^{d} $} une application $ w : \RR^{d} \ra \RR $ lisse \`a support compact telle que pour tout $ \jj=(j_{1},...,j_{d}) \in \NN^{d} $ il existe $ A_{\jj}>0 $ tel que \[ \forall \xx\in \RR^{d}, \; \; \left|\frac{\partial^{j_{1}+...+j_{d}}w}{\partial x_{1}^{j_{1}}...\partial x_{d}^{j_{d}}}(\xx) \right| \leqslant A_{\jj}. \]
\end{Def}
Pour tout $ 0<\varepsilon<1 $ fix\'e, et pour tout r\'eel $ x $, on pose 
\[ \underline{\omega}_{\varepsilon}(x)=c_{0}^{-1}\int_{-\infty}^{\frac{x}{\varepsilon}-1}\omega_{0}(y)dy. \]
Remarquons que cette fonction est nulle pour tout $ x\leqslant 0 $ et vaut $ 1 $ lorsque $ x \geqslant 2\varepsilon $. On d\'efinit de m\^eme :

\[ \overline{\omega}_{\varepsilon}(x)=c_{0}^{-1}\int_{-\infty}^{\frac{x}{\varepsilon}+1}\omega_{0}(y)dy, \]
qui vaut $ 0 $ pour $ x\leqslant -2\varepsilon $ et $ 1 $ pour $ x\geqslant 0 $. On d\'efinit ensuite les fonctions poids

\nomenclature{$ \underline{w}_{\varepsilon}(x_{i_{0}},y_{j_{0}}) $}{ \eqref{(4)} fonction poids en $ x_{i_{0}},y_{j_{0}} $}
 \begin{equation}\label{(4)}
 \underline{w}_{\varepsilon,B}(x_{i_{0}},y_{j_{0}})=\underline{w}_{\varepsilon}(x_{i_{0}},y_{j_{0}})=\omeg(x_{i_{0}}-1)\omeg(B^{\frac{1}{3}}-x_{i_{0}})\underline{\omega}_{\varepsilon}(y_{j_{0}}-1)\underline{\omega}_{\varepsilon}(B^{\frac{2}{3}}-x_{i_{0}}y_{j_{0}}), 
 \end{equation} 
(d'apr\`es les remarques pr\'ec\'edentes, cette fonction vaut $ 0 $ lorsque $ x_{i_{0}}<1 $ ou $ y_{j_{0}}<1 $ ou $ x_{i_{0}}y_{j_{0}}>B^{\frac{2}{3}} $ ou $ x_{i_{0}}>B^{\frac{1}{3}} $, cette fonction traduit donc les conditions $ 1\leqslant x_{i_{0}} \leqslant B^{\frac{1}{3}} $, $  x_{i_{0}}y_{j_{0}}\leqslant B^{\frac{2}{3}} $ et $ y_{j_{0}}\geqslant 1 $ que l'on souhaite imposer \`a $ x_{i_{0}} $ et $ y_{j_{0}} $)
\nomenclature{$ \poids(\xx,\yy) $}{ \eqref{(5)} fonction poids en $ \xx,\yy $}
\begin{equation}\label{(5)}
\poids(\xx,\yy)=\poids(\xx)\poids(\yy)
\end{equation} 
avec 
\begin{equation}
\poids(\xx)=\prod_{i\neq i_{0}}\underline{\omega}_{\varepsilon}\left(1-\frac{|x_{i}|}{x_{i_{0}}}\right)\omeg\left(\frac{|x_{i}|}{x_{i_{0}}}-\frac{1}{x_{i_{0}}}\right),
\end{equation}
(ce qui traduit le fait que $ \max_{i}|x_{i}|=x_{i_{0}} $ et que $ |x_{i}|\geqslant 1 $ pour tout $ i $)
\begin{equation}
\underline{w}_{\epsilon}
(\yy)=\prod_{j\neq j_{0}}\underline{\omega}_{\varepsilon}\left(1-\frac{|y_{j}|}{y_{j_{0}}}\right)\omeg\left(\frac{|y_{j}|}{y_{j_{0}}}-\frac{1}{y_{j_{0}}}\right)
\end{equation}
 (ce qui traduit le fait que $ \max_{j}|y_{j}|=y_{j_{0}} $ et que $ |y_{j}|\geqslant 1 $ pour tout $ j $)
 \nomenclature{$ \poids(\zz) $}{\eqref{(8)} fonction poids en $ \zz $}
\begin{equation}\label{(8)}
\poids(\zz, x_{i_{0}},y_{j_{0}})=\poids(\zz)=\prod_{k=0}^{n}\underline{\omega}_{\varepsilon}\left(1-\frac{|z_{k}|}{P}\right)\underline{\omega}_{\varepsilon}\left(\frac{|z_{k}|}{P}-\frac{1}{P}\right).
\end{equation}
o\`u l'on a pos\'e $ P=B/x_{i_{0}}y_{j_{0}} $ (cette fonction traduit donc la condition $ H(\xx,\yy,\zz)\leqslant B $ et $ |z_{k}|\geqslant 1 $ pour tout $ k $). 
On pose enfin \nomenclature{$ \underline{w}_{\varepsilon,B, i_{0},j_{0}} $}{\eqref{sommeponderee} somme pond\'er\'ee}
\begin{equation}\label{sommeponderee}
\underline{w}_{\varepsilon,B, i_{0},j_{0}}(\xx,\yy,\zz)=\underline{w}_{\varepsilon,B}(x_{i_{0}},y_{j_{0}})\underline{w}_{\varepsilon}(\xx,\yy) \poids(\zz).
\end{equation} 
\begin{rem}
Il s'av\'erera souvent assez pratique par la suite d'\'ecrire les fonctions poids $ \poids(x_{i_{0}},y_{j_{0}}) $, $ \poids(\xx) $ et $ \poids(\yy) $ sous la forme : 
\begin{equation}
\underline{w}_{\varepsilon}(x_{i_{0}},y_{j_{0}})=\poids(x_{i_{0}})\poids(y_{j_{0}}),
\end{equation}
avec \begin{align}
\poids(x_{i_{0}})=\omeg(x_{i_{0}}-1)\omeg(B^{\frac{1}{3}}-x_{i_{0}}), \\ \poids(y_{j_{0}})=\underline{\omega}_{\varepsilon}(y_{j_{0}}-1)\underline{\omega}_{\varepsilon}(B^{\frac{2}{3}}-x_{i_{0}}y_{j_{0}})
\end{align}
ainsi que :
\begin{equation}
 \poids(\xx)=\prod_{i\neq i_{0}}\poids(x_{i})
\end{equation}
avec \begin{equation}
\poids(x_{i})=\underline{\omega}_{\varepsilon}\left(1-\frac{|x_{i}|}{x_{i_{0}}}\right)\omeg\left(\frac{|x_{i}|}{x_{i_{0}}}-\frac{1}{x_{i_{0}}}\right),
\end{equation}
et 
\begin{equation}
 \poids(\yy)=\prod_{j\neq j_{0}}\poids(y_{j})
\end{equation}
avec \begin{equation}
\poids(y_{j})=\underline{\omega}_{\varepsilon}\left(1-\frac{|y_{j}|}{y_{j_{0}}}\right)\omeg\left(\frac{|y_{j}|}{y_{j_{0}}}-\frac{1}{y_{j_{0}}}\right).
\end{equation}
\end{rem}
On d\'efinit de m\^eme des fonctions poids $  \overline{w}_{\varepsilon,B,i_{0},j_{0}} $ (en rempla\c{c}ant les $ \omeg $ par $ \overline{\omega}_{\varepsilon} $). On note \[ N(\underline{w}_{\varepsilon,B,i_{0},j_{0}},B) =\sum_{\xx, \yy,\zz\in \ZZ^{n+1}}\underline{w}_{\varepsilon,B,i_{0},j_{0}}(\xx,\yy,\zz)\delta_{\xx.\yy.\zz}, \] et \[ N(\overline{w}_{\varepsilon,B,i_{0},j_{0}},B) =\sum_{\xx, \yy,\zz\in \ZZ^{n+1}}\overline{w}_{\varepsilon,B,i_{0},j_{0}}(\xx,\yy,\zz)\delta_{\xx.\yy.\zz}.\] Il existe alors des constantes $ C_{1},C_{2} $ telles que  : \begin{equation}\label{encadrement} \sum_{i_{0},j_{0}}N(\underline{w}_{\varepsilon,B,i_{0},j_{0}},B)+C_{1}B\log(B)\leqslant N'(B)\leqslant\sum_{i_{0},j_{0}}N(\overline{w}_{\varepsilon,B,i_{0},j_{0}},B)+C_{2}B\log(B),\end{equation} o\`u \nomenclature{$ N'(B) $}{\eqref{N'B}} \begin{multline}\label{N'B} N'(B)=\card\{ (\xx,\yy,\zz)\in \ZZ_{0}^{n+1}\times \ZZ_{0}^{n+1}\times \ZZ_{0}^{n+1}\; |  \; x_{\max} >0, \; y_{\max}>0,\; \\ \xx.\yy.\zz=0 ,\;   H(\xx,\yy,\zz)\leqslant B, \; \max_{i}|x_{i}|\max_{j}|y_{j}|\leqslant B^{\frac{2}{3}}, \; \max_{i}|x_{i}|\leqslant B^{\frac{1}{3}} \} \end{multline} (nous verrons en effet dans la partie \ref{dernieresection} que le nombre de points contenus dans les intersections d\'efinies par $ x_{\max}=x_{i_{0}}=x_{i_{0}'} $, avec $ i_{0}\neq i_{0}' $, ou $ y_{\max}=y_{j_{0}}=y_{j_{0}'} $,avec $ j_{0}\neq j_{0}' $ est du type $ O(B\log(B)) $). Nous allons montrer que pour tous $ i_{0},j_{0} $ et pour $ \varepsilon $ assez petit, il existe une constante $ C_{i_{0},j_{0}} $ telle que \[ N(\underline{w}_{\varepsilon,B,i_{0},j_{0}},B)\sim_{B\ra \infty}N(\overline{w}_{\varepsilon,B,i_{0},j_{0}},B)\sim_{B\ra \infty}C_{i_{0},j_{0}}B^{n}\log(B)^{2}, \] ce qui nous permettra de conclure quant \`a la valeur asymptotique de $ N'(B) $.\\

On cherche \`a \'evaluer : 
\begin{align*}
N(\underline{w}_{\varepsilon,B,i_{0},j_{0}},B) & =\sum_{\xx, \yy,\zz\in \ZZ^{n+1}}\underline{w}_{\varepsilon,B,i_{0},j_{0}}(\xx,\yy,\zz)\delta_{\xx.\yy.\zz} \\ 
& = \sum_{x_{i_{0}},y_{j_{0}}}\poids(x_{i_{0}},y_{j_{0}})\sum_{\substack{(x_{i})_{i\neq i_{0}} \\ (y_{j})_{j\neq j_{0}} }}\poids(\xx,\yy)\sum_{\zz}\poids(\zz)\delta_{\xx.\yy.\zz}. \end{align*} 
o\`u $ \xx.\yy.\zz=x_{0}y_{0}z_{0}+...+x_{n}y_{n}z_{n} $. On applique alors la formule \eqref{equation}, et on obtient  $ N(\underline{w}_{\varepsilon,B,i_{0},j_{0}},B) $ sous la forme :

\begin{equation}\label{generalite2}
\sum_{x_{i_{0}},y_{j_{0}}}\poids(x_{i_{0}},y_{j_{0}})\sum_{\substack{(x_{i})_{i\neq i_{0}} \\ (y_{j})_{j\neq j_{0}} }}\poids(\xx,\yy)\sum_{\cc\in \ZZ^{n+1}}\sum_{q=1}^{\infty}q^{-n-1}S_{q}(\xx,\yy)(\cc)I_{q}(\xx,\yy)(\cc), \end{equation} o\`u l'on a not\'e :
\nomenclature{ $ S_{q}(\xx,\yy)(\cc) $ } {\eqref{sommesing} somme singuli\`ere en $ \xx,\yy $}
 \begin{equation}\label{sommesing} S_{q}(\xx,\yy)(\cc)=\sum_{d\in (\ZZ/q\ZZ)^{\ast}}\sum_{\bb \in (\ZZ/q\ZZ)^{n+1}}e_{q}(d\xx.\yy.\bb+\cc.\bb) \end{equation}\nomenclature{ $ I_{q}(\xx,\yy)(\cc) $ } {\eqref{intsing} somme singuli\`ere en $ \xx,\yy $}
 \begin{equation}\label{intsing} I_{q}(\xx,\yy)(\cc)=c_{Q}\int_{\RR^{n+1}}\frac{\poids(\zz)}{Q^{2}}h\left(\frac{q}{Q},\frac{\xx.\yy.\zz}{Q^{2}}\right)e_{q}(-\cc.\zz)d\zz,       \end{equation}
avec $ c_{Q}=1+O_{N}(Q^{-N}) $ pour tout $ N>0 $.
\begin{rem}
D'apr\`es la remarque \ref{rem2}, $ h\left(\frac{q}{Q},\frac{\xx.\yy.\zz }{Q^{2}}\right) $ est nul si $ \frac{q}{Q}\leqslant\frac{\xx.\yy.\zz }{Q^{2}} $. Comme $ \frac{\xx.\yy.\zz }{Q^{2}}\ll 1 $ sur le support de $ \poids $, $ I_{q}(\xx,\yy)(\cc) $ vaut $ 0 $ si l'on a pas $ \frac{q}{Q} \ll 1 $. Par cons\'equent, on peut restreindre la somme sur $ q $ aux entiers $ q\ll Q $ lorsque $ Q\geqslant  \sqrt{B} $ (En fait, nous fixerons $ Q=\sqrt{B} $).
\end{rem}

Nous allons voir par la suite que les $ \cc\neq \0 $ fourniront un terme d'erreur, et le terme principal sera obtenu \`a partir de la partie $ \cc=\0 $. Auparavant, nous allons effectuer un changement de variables $ \zz=P\uu=\frac{B}{x_{i_{0}}y_{j_{0}}}\uu $ dans l'int\'egrale $ I_{q}(\xx,\yy)(\cc) $. On obtient alors : 
\begin{align*}
 I_{q}(\xx,\yy)(\cc) & = c_{Q}\int_{\RR^{n+1}}\frac{\poids(\zz)}{Q^{2}}h\left(\frac{q}{Q},\frac{\xx.\yy.\zz}{Q^{2}}\right)e_{q}(-\cc.\zz)d\zz \\ & =c_{Q}\frac{P^{n+1}}{Q^{2}}\int_{\RR^{n+1}}\poids(\uu)h\left(\frac{q}{Q},\frac{P}{Q^{2}}\xx.\yy.\uu\right)e_{q}(-P\cc.\uu)d\uu,
 \end{align*}
o\`u, par abus de langage, on note \'egalement \nomenclature{$ \poids(\uu) $}{fonction poids en $ \uu $} $ \poids(\uu) $ la nouvelle fonction poids en la variable $ \uu $. Remarquons que sur $ \supp(\poids) $ on a alors $ \max_{k}|u_{k}|\leqslant 1 $. On pose \`a pr\'esent $ Q=\sqrt{B} $ de sorte que \[ \frac{P}{Q^{2}}\xx.\yy.\uu=\frac{\xx.\yy.\uu}{x_{i_{0}}y_{j_{0}}}=\sum_{k=0}^{n} \frac{x_{k}y_{k}}{x_{i_{0}}y_{j_{0}}}u_{k} \] qui est une forme lin\'eaire en $ \uu $ \`a coefficients born\'es. On note par la suite : \nomenclature{$ I(\cc) $}{ \eqref{oubli} $ =c_{Q}\frac{P^{n+1}}{Q^{2}} I(\cc) $} \begin{equation}\label{oubli}
I(\cc)=I(\cc,\xx,\yy)=\int_{\RR^{n+1}}\poids(\uu)h\left(\frac{q}{Q},\frac{\xx.\yy.\uu}{x_{i_{0}}y_{j_{0}}}\right)e_{q}(-P\cc.\uu)d\uu.
\end{equation}
et on a donc \begin{align}\label{Iqc}
 I_{q}(\xx,\yy)(\cc) & =c_{Q}\frac{P^{n+1}}{Q^{2}} I(\cc) \\  & =c_{Q}\frac{B^{n}}{x_{i_{0}}^{n+1}y_{j_{0}}^{n+1}}I(\cc). 
 \end{align}
Ceci nous permet de r\'e\'ecrire la formule \eqref{generalite2} sous la forme : 
\begin{multline}\label{nouvformule}
N(\underline{w}_{\varepsilon,B,i_{0},j_{0}},B)= c_{Q}B^{n}\sum_{x_{i_{0}},y_{j_{0}}}\frac{\poids(x_{i_{0}},y_{j_{0}})}{x_{i_{0}}^{n+1}y_{j_{0}}^{n+1}}\sum_{\substack{(x_{i})_{i\neq i_{0} }\\ (y_{j})_{j\neq j_{0}} }}\poids(\xx,\yy)\\ \sum_{\cc\in \ZZ^{n+1}}\sum_{q\ll Q}q^{-n-1}S_{q}(\xx,\yy)(\cc)I(\cc).
\end{multline}

\section{Le terme d'erreur}\label{PartieTE}

 Nous allons d\'emontrer que la somme 
\begin{equation}\label{termerreur2}
c_{Q}B^{n}\sum_{x_{i_{0}},y_{j_{0}}}\frac{\poids(x_{i_{0}},y_{j_{0}})}{x_{i_{0}}^{n+1}y_{j_{0}}^{n+1}}\sum_{\substack{(x_{i})_{i\neq i_{0} }\\ (y_{j})_{j\neq j_{0}} }}\poids(\xx,\yy)\sum_{\cc\neq \0}\sum_{q\ll Q}q^{-n-1}S_{q}(\xx,\yy)(\cc)I(\cc)
\end{equation}
est du type $ O_{\epsilon}(B^{n}) $.
Nous allons pour cela commencer par majorer les int\'egrales $ I(\cc) $ par les m\'ethodes d\'ecrites par Heath-Brown dans \cite[Chapitres 7 et 8]{HB1}. 
 
 \subsection{Estimations simples de $ I(\cc) $ } \label{estimationsimples}

Nous allons \`a pr\'esent construire une nouvelle fonction poids $ \psi(\uu) $. Pour cela nous allons utiliser le lemme suivant (voir \cite[Lemme 3]{HB1}) : 
\begin{lemma}
Soit $ w $ une fonction poids sur $ \RR^{n+1} $ et $ K_{1},...,K_{n},... $ une suite de r\'eels strictement positifs fix\'es. Soit $ g : \RR^{n+1} \ra \CC $ d\'efinie sur $ \supp(w) $ et satisfaisant $ |g|\gg 1 $ et \[ \left|\frac{\partial^{j_{0}+...+j_{n}}g(\xx)}{\partial x_{0}^{j_{0}}...\partial x_{n}^{j_{n}}}\right| \leqslant K_{j} \; \; \; \; ( \mbox{pour} \; j=\sum_{k=0}^{n} j_{k} \geqslant 1), \] sur $ \supp(w) $. Alors la fonction $ \frac{w}{g} $ est une fonction poids. 
\end{lemma}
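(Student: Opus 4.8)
The plan is to prove by induction on $ j = j_0 + \dots + j_n $ that every partial derivative of $ w/g $ of order $ j $ is bounded on $ \supp(w) $. The base case $ j = 0 $ is immediate: since $ w $ is a weight function it is bounded, and $ |g| \gg 1 $ forces $ |1/g| \ll 1 $, so $ |w/g| $ is bounded. For the inductive step, I would first establish the auxiliary claim that all partial derivatives of $ 1/g $ (of every order $ \geqslant 0 $) are bounded on $ \supp(w) $; granting this, the result follows at once from the general Leibniz formula
\[
\frac{\partial^{j_0 + \dots + j_n}}{\partial x_0^{j_0} \cdots \partial x_n^{j_n}}\left(\frac{w}{g}\right) = \sum \binom{j_0}{k_0}\cdots\binom{j_n}{k_n}\, \frac{\partial^{k_0 + \dots + k_n} w}{\partial x_0^{k_0}\cdots \partial x_n^{k_n}}\cdot \frac{\partial^{(j_0 - k_0) + \dots}}{\partial x_0^{j_0 - k_0}\cdots}\left(\frac{1}{g}\right),
\]
where the sum is over $ 0 \leqslant k_\ell \leqslant j_\ell $: each factor $ \partial^{\bullet} w $ is bounded by hypothesis (definition of weight function), each factor $ \partial^{\bullet}(1/g) $ is bounded by the auxiliary claim, and the sum is finite, so the product is $ \ll 1 $ on $ \supp(w) $.

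It therefore remains to prove the auxiliary claim, and this is the step I expect to be the real content. I would proceed by strong induction on the order $ m $ of the derivative of $ 1/g $. For $ m = 0 $ this is just $ |1/g| \ll 1 $. For $ m \geqslant 1 $, differentiating the identity $ g \cdot (1/g) = 1 $ via the Leibniz formula and isolating the top-order term gives, schematically,
\[
g \cdot \frac{\partial^m}{\partial \xx^{\boldsymbol{\alpha}}}\!\left(\frac1g\right) = -\sum_{\boldsymbol{0} < \boldsymbol{\beta} \leqslant \boldsymbol{\alpha}} \binom{\boldsymbol{\alpha}}{\boldsymbol{\beta}}\, \frac{\partial^{|\boldsymbol{\beta}|} g}{\partial \xx^{\boldsymbol{\beta}}}\cdot \frac{\partial^{|\boldsymbol{\alpha}| - |\boldsymbol{\beta}|}}{\partial \xx^{\boldsymbol{\alpha} - \boldsymbol{\beta}}}\!\left(\frac1g\right),
\]
for any multi-index $ \boldsymbol{\alpha} $ with $ |\boldsymbol{\alpha}| = m $. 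On the right-hand side, every derivative of $ g $ is bounded by the hypothesis $ |\partial^{\boldsymbol{\beta}} g| \leqslant K_{|\boldsymbol{\beta}|} $, and every derivative of $ 1/g $ appearing has order $ |\boldsymbol{\alpha}| - |\boldsymbol{\beta}| < m $, hence is bounded by the (strong) induction hypothesis; so the whole right-hand side is $ \ll 1 $ on $ \supp(w) $. Dividing by $ g $ and using $ |1/g| \ll 1 $ once more yields $ |\partial^{\boldsymbol{\alpha}}(1/g)| \ll 1 $, completing the induction.

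One technical point to watch: the bounds produced are implied constants depending on the order $ m $ (equivalently on $ \boldsymbol{j} $), on the constants $ K_1, \dots, K_m $, and on the lower bound in $ |g| \gg 1 $ — but not on the point $ \xx \in \supp(w) $, which is exactly what the definition of weight function requires (for each $ \boldsymbol{j} $ there must exist a constant $ A_{\boldsymbol{j}} $). I would also remark that $ g $ need not be real-valued: the argument above is purely formal manipulation of the Leibniz rule and works verbatim for $ g : \RR^{n+1} \to \CC $, with $ 1/g $ well-defined on $ \supp(w) $ precisely because $ |g| \gg 1 $ keeps $ g $ away from zero there. No obstacle beyond careful bookkeeping of the multi-index combinatorics is anticipated.
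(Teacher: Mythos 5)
Votre démonstration est correcte : l'induction sur l'ordre de dérivation, combinée à la formule de Leibniz pour le quotient et à la majoration des dérivées de $1/g$ obtenue en dérivant l'identité $g\cdot(1/g)=1$, est l'argument standard pour ce résultat. L'article ne donne d'ailleurs aucune preuve de ce lemme (il renvoie simplement à \cite[Lemme 3]{HB1}), et votre rédaction coïncide pour l'essentiel avec celle de Heath-Brown.
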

On pose \nomenclature{$ \omega_{1}(s) $}{$ =\omega_{0}\left(\frac{s}{2n}\right) $}$ \omega_{1}(s)=\omega_{0}\left(\frac{s}{2n}\right) $. Puisque, sur $ \supp(\poids(\uu)) $, $ \max_{k}|u_{k}|\leqslant 1 $ on a  $ \left|\frac{\xx.\yy.\uu}{x_{i_{0}}y_{j_{0}}}\right| \leqslant n  $ et donc $ \omega_{1}\left(\frac{ \xx.\yy.\uu}{x_{i_{0}}y_{j_{0}}}\right) \gg 1 $ sur $ \supp(\poids(\uu)) $. Par ailleurs, $ \omega_{0} $ \'etant une fonction poids, toutes les d\'eriv\'ees partielles en $ \uu $ de $ \omega_{1}\left(\frac{ \xx.\yy.\uu}{x_{i_{0}}y_{j_{0}}}\right) $ sont born\'ees. Par cons\'equent, par application du lemme ci-dessus, la fonction \nomenclature{$ \psi(\uu) $}{$ =\frac{\poids(\uu)}{\omega_{1}\left(\frac{ \xx.\yy.\uu}{x_{i_{0}}y_{j_{0}}}\right)} $}
\[ \psi(\uu)=\frac{\poids(\uu)}{\omega_{1}\left(\frac{ \xx.\yy.\uu}{x_{i_{0}}y_{j_{0}}}\right)} \] est une fonction poids. On peut alors exprimer $ I(\cc) $ sous la forme : 
\[ I(\cc)=\int_{\RR^{n+1}}\psi(\uu)\omega_{1}\left(\frac{ \xx.\yy.\uu}{x_{i_{0}}y_{j_{0}}}\right)h\left(\frac{q}{Q},\frac{\xx.\yy.\uu}{x_{i_{0}}y_{j_{0}}}\right)e_{q}(-P\cc.\uu)d\uu. \] 
Par inversion de fourier, on a que : 
\[ \omega_{1}\left( \frac{\xx.\yy.\uu}{x_{i_{0}}y_{j_{0}}}\right)h\left(\frac{q}{Q},\frac{\xx.\yy.\uu}{x_{i_{0}}y_{j_{0}}}\right)=\int_{-\infty}^{\infty}p(t)e\left(t\frac{\xx.\yy.\uu}{x_{i_{0}}y_{j_{0}}}\right)dt \]
o\`u  \nomenclature{$ p(t) $}{$ =\int_{-\infty}^{\infty}\omega_{1}(s)h\left(\frac{q}{Q},s\right)e(-ts)ds $}\[ p(t)=\int_{-\infty}^{\infty}\omega_{1}(s)h\left(\frac{q}{Q},s\right)e(-ts)ds. \]
On obtient alors \begin{equation}\label{I(c)} I(\cc)=\int_{-\infty}^{\infty}p(t)\int_{\RR^{n+1}}\psi(\uu)e(\phi(\uu,t))d\uu dt, \end{equation}
o\`u l'on a pos\'e \nomenclature{$ \phi(\uu,t) $}{$ =t\frac{\xx.\yy.\uu}{x_{i_{0}}y_{j_{0}}}-\frac{P\cc}{q}.\uu $}\[ \phi(\uu,t)=t\frac{\xx.\yy.\uu}{x_{i_{0}}y_{j_{0}}}-\frac{P\cc}{q}.\uu.\]

D'apr\`es les propri\'et\'es de la fonction $ h $ \'enonc\'ees par Heath-Brown, on a (voir \cite[Lemme 5]{HB1}) : 
\[ \frac{d^{N}}{ds^{N}}\left(\omega_{1}(s)h\left(\frac{q}{Q},s\right)\right) \ll \left(\frac{q}{Q}\right)^{-N-1}\min\{1,\left(\frac{q}{Q|s|}\right)^{2}\} \] pour tout entier $ N> 0 $. Par int\'egrations par parties on a alors : \begin{align*} p(t) & =\int_{-\infty}^{\infty}  \omega_{1}(s)h\left(\frac{q}{Q},s\right)e(-ts)ds \\ & =(-t)^{-N}\int_{-\infty}^{\infty} \frac{d^{N}}{ds^{N}}\left( \omega_{1}(s)h\left(\frac{q}{Q},s\right)\right)e(-ts)ds \\ & \ll |t|^{-N}\int_{-\infty}^{\infty} \left(\frac{q}{Q}\right)^{-N-1}\min\{1,\left(\frac{q}{Q|s|}\right)^{2}\}ds \\ & \ll  \left(\frac{q}{Q}\right)^{-N-1}|t|^{-N}\left(\int_{|s|\leqslant \frac{q}{Q}}ds + \left(\frac{q}{Q}\right)^{2}\int_{|s|>\frac{q}{Q}}\frac{1}{|s|^{2}}ds\right) \\ & \ll   \left(\frac{q}{Q}\right)^{-N}|t|^{-N} , \end{align*}
pour tout entier $ N > 0 $.  On a donc \'etabli : \begin{equation}\label{p(t)}
p(t) \ll \left(\frac{q}{Q}|t|\right)^{-N} \end{equation} pour tout $ t \in \RR $ et $ N > 0 $. Pour le cas $ N=0 $, \cite[Lemme 5]{HB1} donne $ \left|\omega_{1}(s)h\left(\frac{q}{Q},s\right)\right|\ll \frac{Q}{q} $, et donc on trouve \begin{equation}\label{p(t)2}
p(t)\ll \frac{Q}{q}, 
\end{equation}\'etant donn\'e que $ \supp(\omega_{1}) $ est de taille $ O(1) $. Ces majorations nous seront utiles dans ce qui va suivre.\\

Pour majorer l'int\'egrale $ I(\cc) $, nous allons consid\'erer la formule \eqref{I(c)}, et s\'eparer l'int\'egrale sur $ t $ en deux parties. Lorsque $ |t|\leqslant \frac{P|\cc|}{3q} $, on remarque que, si $ |c_{l}|=\max_{k}|c_{k}| $, alors on a que \begin{align*} \left|\frac{\partial\phi}{\partial u_{l}}(\uu,t)\right| & =\left| t\frac{x_{l}y_{l}}{x_{i_{0}}y_{j_{0}}}-\frac{Pc_{l}}{q}\right| \\ & \geqslant \frac{Pc_{l}}{q}-|t|\left|\frac{x_{l}y_{l}}{x_{i_{0}}y_{j_{0}}}\right| \\ & \geqslant \frac{P|\cc|}{3q}, \end{align*} car $ \left|\frac{x_{l}y_{l}}{x_{i_{0}}y_{j_{0}}}\right|\leqslant (1+\varepsilon) $ sur $ \supp(\poids(\xx,\yy)) $. Nous allons alors utiliser le lemme suivant (voir \cite[Lemme 10]{HB1}) :

  \begin{lemma}\label{lemme10}
  Soit $ w : \RR^{n+1} \ra \RR $  une fonction poids et $ f(\xx) $ une fonction $ C^{\infty} $ \`a valeurs r\'eelles d\'efinie sur $ \supp(w) $. On suppose qu'il existe $ \lambda>0 $ et un ensemble de r\'eels strictement positifs $ \{ A_{2},A_{3},A_{4},...\} $ tels que pour tout $ \xx \in \supp(w) $ on a : 
  \[ |\nabla f | \geqslant \lambda, \] et
  \[ \left| \frac{\partial^{j_{0}+...+j_{n}} f(\xx)}{\partial^{j_{0}} x_{0}...\partial^{j_{n}} x_{n}}\right| \leqslant A_{j}\lambda, \; \; \; \;  (j=j_{0}+...+j_{n}\geqslant 2 ).\]
  Alors pour tout $ N>0 $, on a : 
  \[ \int_{\RR^{n+1}} w(\xx)e(f(\xx))d\xx \ll \lambda^{-N}. \]
  \end{lemma}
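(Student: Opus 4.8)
The statement to prove is Lemme~\ref{lemme10}: if $f$ is $C^\infty$ on $\supp(w)$ with $|\nabla f|\geqslant\lambda$ and all higher derivatives bounded by $A_j\lambda$, then $\int w(\xx)e(f(\xx))\,d\xx\ll\lambda^{-N}$ for every $N>0$. The plan is the standard non-stationary phase / repeated integration by parts argument, organized so that it works cleanly in $n+1$ variables.

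First I would localize: cover $\supp(w)$ by finitely many (boundedly many, with a bound depending only on the $A_j$ and $n$) small balls on each of which one fixed partial derivative, say $\partial f/\partial x_k$, dominates — concretely, on which $|\partial f/\partial x_k|\gg\lambda$. This is possible because $|\nabla f|\geqslant\lambda$ forces some coordinate derivative to be $\geqslant\lambda/\sqrt{n+1}$ at each point, and the second-derivative bound $|\partial^2 f|\leqslant A_2\lambda$ makes the sign and size of $\partial f/\partial x_k$ stable on a ball of radius $\asymp 1/A_2$. Multiply $w$ by a smooth partition of unity subordinate to this cover; each piece is again a fonction poids (the bump functions have bounded derivatives, and a product of fonctions poids is a fonction poids), so it suffices to bound each localized integral.

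Next, on a piece where $g:=\partial f/\partial x_k$ satisfies $|g|\gg\lambda$, I would introduce the differential operator $L\varphi=\frac{1}{2\pi i}\,\partial(\varphi/g)/\partial x_k$, which has the key property $L\big(e(f)\big)=e(f)$ on the support. Integrating by parts $N$ times in the $x_k$ variable (no boundary terms, since $w$ has compact support),
\[
\int_{\RR^{n+1}} w(\xx)e(f(\xx))\,d\xx=\int_{\RR^{n+1}} \big((L^{t})^{N}w\big)(\xx)\,e(f(\xx))\,d\xx,
\]
where $L^{t}\varphi=-\frac{1}{2\pi i}\,\frac{1}{g}\,\partial\varphi/\partial x_k$ is the formal transpose. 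The point is then to check that $(L^t)^N w$ is $\ll\lambda^{-N}$ pointwise on the (bounded-measure) support: each application of $L^t$ either differentiates $w$ (harmless, bounded) or hits a power of $1/g$, producing at worst a factor $g'/g^2$ with $|g'|\leqslant A_{j}\lambda$ and $|g|\gg\lambda$, hence a net gain of $\lambda^{-1}$ per step; by induction $(L^t)^N w$ is a finite sum of terms of the form (bounded derivative of $w$)$\times$(product of derivatives of $g$ of total order $\leqslant N$)$/g^{M}$ with $M\leqslant 2N$, and counting powers of $\lambda$ one gets the bound $\ll\lambda^{-N}$. Since the support has measure $O(1)$, integrating gives $\ll\lambda^{-N}$, and summing the $O(1)$ localized pieces preserves this.

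The main obstacle — really the only place requiring care — is the bookkeeping in the inductive claim that $(L^t)^N w$ is a sum of terms each of size $\ll\lambda^{-N}$ uniformly on $\supp(w)$. One must track that every extra factor $1/g$ introduced (raising $M$) is always accompanied by a derivative of $g$ carrying a factor $\lambda$ in its bound, so the homogeneity in $\lambda$ works out to exactly $\lambda^{-N}$ and not worse; the hypotheses $|\partial^j f|\leqslant A_j\lambda$ for $j\geqslant 2$ are calibrated precisely so that $\partial^j g=\partial^{j}(\partial f/\partial x_k)$ has a bound proportional to $\lambda$ (one power, matching the single $g$ in the denominator it compensates). Once this Leibniz/quotient-rule induction is set up, the constants depend only on $N$, $n$, and the $A_j$, as required, and $\lambda$ itself never enters the implied constant. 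I would present the one-variable case of this induction in detail and remark that the multivariable case is identical variable-by-variable.
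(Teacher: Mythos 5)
Your proposal is correct and is essentially the standard non-stationary phase argument behind the cited result (the paper gives no proof of this lemma, referring directly to \cite[Lemme 10]{HB1}): localize to pieces where a single derivative $g=\partial f/\partial x_k$ satisfies $|g|\gg\lambda$, integrate by parts $N$ times, and use the calibration $|\partial^{j}f|\leqslant A_{j}\lambda$ to see that each step gains a factor $\lambda^{-1}$. The one blemish is that you have interchanged the operator and its transpose: the operator reproducing $e(f)$ is $D\varphi=\frac{1}{2\pi i\,g}\,\partial\varphi/\partial x_{k}$, and its transpose is $D^{t}\varphi=-\frac{1}{2\pi i}\,\partial(\varphi/g)/\partial x_{k}$, which is in fact the operator whose iterates your Leibniz/quotient-rule induction estimates (each term after $N$ steps having $M$ powers of $g$ downstairs and $M-N$ derivative factors of $g$, each of size $\ll\lambda$, upstairs); with the roles corrected the argument is complete and the implied constant depends only on $N$, $n$, $w$ and the $A_{j}$.
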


Puisque l'on a $ |\nabla \phi(\uu,t)| \gg \frac{P|\cc|}{q} $ et que les d\'eriv\'ees partielles sup\'erieures en $ \uu $ de $ \phi(\uu,t) $ sont toutes nulles, on a par le lemme \ref{lemme10} pour $ |t|\leqslant \frac{P|\cc|}{3q}  $: \begin{equation}\label{tpetit} \left| \int_{\RR^{n+1}} \psi(\uu)e(\phi(\uu,t))d\uu \right| \ll_{M} \left( \frac{P|\cc|}{q}\right)^{-M} \end{equation}
pour tout entier $ M>0 $. \\

Lorsque $ |t|\geqslant \frac{P|\cc|}{3q}  $, nous utilisons la majoration triviale  \begin{equation}\label{majotriviale} \left| \int_{\RR^{n+1}} \psi(\uu)e(\phi(\uu,t))d\uu \right| \ll 1,  \end{equation} (due au fait que $ |\psi(\uu)|\ll 1 $ sur $ \supp(\poids(\uu)) $ qui est de mesure $ O(1) $). En utilisant par ailleurs la majoration \eqref{p(t)2} pour $ |t|\leqslant \frac{P|\cc|}{3q}  $ et \eqref{p(t)} avec $ N=M $ pour $ |t|\geqslant \frac{P|\cc|}{3q}  $, on obtient l'estimation suivante de $ I(\cc) $ : 
\begin{align*} I(\cc) & \ll \frac{Q}{q}\left( \frac{P|\cc|}{q}\right)^{1-M} + \left( \frac{P|\cc|}{q}\right)^{1-M}\left(\frac{q}{Q}\right)^{-M} \\ &  \ll  \left( \frac{P|\cc|}{q}\right)^{1-M}\left(\frac{q}{Q}\right)^{-M} \end{align*}
(rappelons que l'on a $ q\ll Q $). On a ainsi \'etabli le lemme suivant : 

\begin{lemma}\label{lemmemajo1}
Pour tout entier $ N \geqslant 0 $, on a : \[ I(\cc) \ll \frac{Q}{q}\left(\frac{Q}{P|\cc|}\right)^{N}=\frac{Q}{q}\left(\frac{x_{i_{0}}y_{j_{0}}}{\sqrt{B}|\cc|}\right)^{N}. \]
\end{lemma}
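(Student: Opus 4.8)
Le plan est de repartir de l'\'ecriture int\'egrale \eqref{I(c)} de $I(\cc)$ et de distinguer le cas $N=0$ du cas $N\geqslant 1$, la quasi-totalit\'e du travail ayant d\'ej\`a \'et\'e effectu\'ee dans les estimations pr\'eliminaires qui pr\'ec\`edent.

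Pour $N=0$, la majoration $I(\cc)\ll Q/q$ sera imm\'ediate : d'apr\`es la remarque \ref{rem2} on a $h\!\left(\frac{q}{Q},\frac{\xx.\yy.\uu}{x_{i_{0}}y_{j_{0}}}\right)\ll \frac{Q}{q}$, tandis que $|\poids(\uu)|\ll 1$ sur son support, lequel est de mesure $O(1)$; il suffit alors d'int\'egrer en $\uu$ dans \eqref{oubli}.

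Pour $N\geqslant 1$, je poserai $M=N+1\geqslant 2$ et je d\'ecouperai l'int\'egrale sur $t$ de \eqref{I(c)} au seuil $|t|=\frac{P|\cc|}{3q}$. Sur $|t|\leqslant\frac{P|\cc|}{3q}$, on a d\'ej\`a vu que, en prenant $l$ tel que $|c_{l}|=\max_{k}|c_{k}|$ et en utilisant $\left|\frac{x_{l}y_{l}}{x_{i_{0}}y_{j_{0}}}\right|\leqslant 1+\varepsilon$ sur $\supp(\poids(\xx,\yy))$, on obtient $\left|\frac{\partial\phi}{\partial u_{l}}(\uu,t)\right|\geqslant\frac{P|\cc|}{3q}$, d'o\`u $|\nabla_{\uu}\phi|\gg\frac{P|\cc|}{q}$; comme $\phi$ est affine en $\uu$, ses d\'eriv\'ees partielles d'ordre $\geqslant 2$ sont nulles et le lemme \ref{lemme10} appliqu\'e \`a la fonction poids $\psi$ donne $\left|\int_{\RR^{n+1}}\psi(\uu)e(\phi(\uu,t))d\uu\right|\ll_{M}\left(\frac{P|\cc|}{q}\right)^{-M}$; combin\'e avec \eqref{p(t)2} et int\'egr\'e sur un intervalle de longueur $\ll\frac{P|\cc|}{q}$, cela fournit une contribution $\ll\frac{Q}{q}\left(\frac{P|\cc|}{q}\right)^{1-M}$. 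Sur $|t|\geqslant\frac{P|\cc|}{3q}$, je majorerai l'int\'egrale int\'erieure trivialement via \eqref{majotriviale} et $p(t)$ via \eqref{p(t)} avec l'exposant $M$; l'int\'egrale $\int_{|t|\geqslant P|\cc|/3q}\left(\frac{q|t|}{Q}\right)^{-M}dt$ converge (puisque $M\geqslant 2$) et vaut $\ll\left(\frac{q}{Q}\right)^{-M}\left(\frac{P|\cc|}{q}\right)^{1-M}$, d'o\`u une contribution du m\^eme ordre.

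Il restera \`a sommer les deux contributions : comme $q\ll Q$, le facteur $\left(\frac{q}{Q}\right)^{-M}\gg 1$ absorbe le $\frac{Q}{q}$ issu de la premi\`ere, et on aura $I(\cc)\ll\left(\frac{q}{Q}\right)^{-M}\left(\frac{P|\cc|}{q}\right)^{1-M}=\frac{Q}{q}\left(\frac{Q}{P|\cc|}\right)^{M-1}=\frac{Q}{q}\left(\frac{Q}{P|\cc|}\right)^{N}$, la forme annonc\'ee s'en d\'eduisant en substituant $Q=\sqrt{B}$ et $P=\frac{B}{x_{i_{0}}y_{j_{0}}}$. Le seul point vraiment d\'elicat est la minoration du gradient de $\phi$ sur le support des poids — qui tient \`a ce que $\frac{x_{k}y_{k}}{x_{i_{0}}y_{j_{0}}}$ reste de l'ordre de $1$ gr\^ace au choix des fonctions poids — ainsi que la r\'epartition correcte des majorations \eqref{p(t)} et \eqref{p(t)2} de $p(t)$ de part et d'autre du seuil $\frac{P|\cc|}{3q}$; le reste n'est qu'assemblage des estimations d\'ej\`a \'etablies.
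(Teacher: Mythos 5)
Votre preuve est correcte et suit essentiellement la d\'emarche du texte : d\'ecoupage de l'int\'egrale en $t$ au seuil $\frac{P|\cc|}{3q}$, lemme \ref{lemme10} avec la minoration du gradient de $\phi$ pour la partie $|t|$ petit, majorations \eqref{p(t)2} et \eqref{p(t)} de $p(t)$ de part et d'autre, puis absorption du facteur $\frac{Q}{q}$ gr\^ace \`a $q\ll Q$. Votre traitement s\'epar\'e du cas $N=0$ (et la remarque que la convergence de l'int\'egrale en $|t|^{-M}$ exige $M\geqslant 2$) est m\^eme un peu plus soigneux que la r\'edaction du texte, mais n'en change pas la substance.
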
  
Remarquons dans un premier temps que lorsque $ \sqrt{B}^{1+\nu} \geqslant x_{i_{0}}y_{j_{0}} $ \nomenclature{$ \nu $}{ r\'eel strictement positif arbitrairement petit}(avec $ \nu>0 $ arbitrairement petit), le lemme ci-dessus donne $ I(\cc) \ll \frac{Q}{q}|\cc|^{-N}B^{\frac{\nu N}{2}} $ pour tout $ N\geqslant 0 $, et la somme sur les $ \cc $ tels que $ |\cc|\geqslant B^{\frac{1}{6}} $ donnera une contribution n\'egligeable : en effet, si $ |\cc|\geqslant  B^{\frac{1}{6}} $, alors pour $ N $ assez grand et $ \nu  $ assez petit (par exemple $ N=2(n+2) $ et $ \nu\leqslant \frac{1}{12} $), on aura \[ |\cc|^{N}\geqslant B^{\frac{n+2}{6}}|\cc|^{n+2}\geqslant B^{\frac{n+2}{12}+\frac{\nu N}{2}}|\cc|^{n+2}\geqslant B^{\frac{3}{2}+\frac{\nu N}{2}}|\cc|^{n+2} \](pour $ n\geqslant 16 $), donc $ |I(\cc)|\ll \frac{Q}{q}|\cc|^{-N}B^{\frac{\nu N}{2}}\ll \frac{B^{-1}}{q|\cc|^{-n-2}} $ et par cons\'equent, en restreignant la somme \eqref{termerreur2} aux $ |\cc|\geqslant B^{\frac{1}{6}} $ on a la majoration (en utilisant la majoration triviale $ S_{q}(\xx,\yy)(\cc) \ll q^{n+2} $) : \begin{multline*}
c_{Q}B^{n}\sum_{x_{i_{0}},y_{j_{0}}}\frac{\poids(x_{i_{0}},y_{j_{0}})}{x_{i_{0}}^{n+1}y_{j_{0}}^{n+1}}\sum_{\substack{(x_{i})_{i\neq i_{0} }\\ (y_{j})_{j\neq j_{0}} }}\poids(\xx,\yy)\sum_{\substack{\cc\neq \0 \\ |\cc|\geqslant B^{\frac{1}{6}}}}\sum_{q\ll Q}q^{-n-1}S_{q}(\xx,\yy)(\cc)I(\cc) \\ \ll B^{n-1}\sum_{x_{i_{0}},y_{j_{0}}}\frac{\poids(x_{i_{0}},y_{j_{0}})}{x_{i_{0}}^{n+1}y_{j_{0}}^{n+1}}\sum_{\substack{(x_{i})_{i\neq i_{0} }\\ (y_{j})_{j\neq j_{0}} }}\poids(\xx,\yy)\left(\sum_{\substack{\cc\neq \0 \\ |\cc|\geqslant B^{\frac{1}{6}}}}\frac{1}{|\cc|^{n+2}}\right)\left(\sum_{q\ll Q}1\right) \\ \ll B^{n-\frac{1}{2}}\sum_{x_{i_{0}},y_{j_{0}}}\frac{\poids(x_{i_{0}},y_{j_{0}})}{x_{i_{0}}^{n+1}y_{j_{0}}^{n+1}}\sum_{\substack{(x_{i})_{i\neq i_{0} }\\ (y_{j})_{j\neq j_{0}} }}\poids(\xx,\yy) \\ \ll B^{n-\frac{1}{2}}\sum_{x_{i_{0}},y_{j_{0}}}\frac{\poids(x_{i_{0}},y_{j_{0}})}{x_{i_{0}}y_{j_{0}}}  \ll B^{n-\frac{1}{2}}\log(B)^{2}.
\end{multline*}
Cette partie est donc n\'egligeable par rapport au terme principal, qui rappelons-le sera du type $ O(B^{n}\log(B)^{2}) $.\\

Consid\'erons \`a pr\'esent le cas o\`u $ \sqrt{B}^{1+\nu} \leqslant x_{i_{0}}y_{j_{0}} $. On suppose dans un premier temps que $ |\cc|\geqslant B $, le lemme \ref{lemmemajo1} donne alors (en rappelant que $ x_{i_{0}}y_{j_{0}} \leqslant B^{\frac{2}{3}} $, d'\`apr\`es \eqref{(4)}) :  \[ I(\cc) \ll \frac{Q}{q}\left(\frac{x_{i_{0}}y_{j_{0}}}{\sqrt{B}|\cc|}\right)^{N} \ll \frac{Q}{q}\left(\frac{x_{i_{0}}y_{j_{0}}}{B}\right)^{N} |\cc|^{-\frac{N}{2}} \ll \frac{Q}{q}B^{-\frac{N}{3}}|\cc|^{-\frac{N}{2}}, \] et en choisissant $ N=2(n+2) $, puis en majorant $ I(\cc) $ par cette expression dans \eqref{termerreur2}, on obtient une contribution : \begin{multline*}
 B^{\frac{n}{3}-\frac{4}{3}+\frac{1}{2}}\sum_{x_{i_{0}},y_{j_{0}}}\frac{\poids(x_{i_{0}},y_{j_{0}})}{x_{i_{0}}^{n+1}y_{j_{0}}^{n+1}}\sum_{\substack{(x_{i})_{i\neq i_{0} }\\ (y_{j})_{j\neq j_{0}} }}\poids(\xx,\yy)\left(\sum_{\substack{\cc\neq \0 \\ |\cc|\geqslant B}}\frac{1}{|\cc|^{n+2}}\right)\left(\sum_{q\ll Q}1\right) \\ \ll B^{\frac{n}{3}-\frac{1}{3}}\sum_{x_{i_{0}},y_{j_{0}}}\frac{\poids(x_{i_{0}},y_{j_{0}})}{x_{i_{0}}y_{j_{0}}} \ll B^{\frac{n}{3}-\frac{1}{3}}\log(B)^{2},
\end{multline*} donc \`a nouveau un terme n\'egligeable par rapport au terme principal. On suppose donc \`a pr\'esent $ |\cc| \leqslant B $ et $ |\cc|\gg \left(\frac{x_{i_{0}}y_{j_{0}}}{\sqrt{B}}\right)^{1+\delta} $, avec $ \delta>0 $ arbitrairement petit. On a alors \[ I(\cc) \ll \frac{Q}{q}\left(\frac{x_{i_{0}}y_{j_{0}}}{\sqrt{B}}\right)^{-N\delta} \leqslant \frac{Q}{q}B^{-\frac{N\delta\nu}{2}}, \] ($ \nu $ et $ \delta $ \'etant fix\'es) et donc en prenant $ N $ assez grand, on voit que la somme sur les $ \cc $ tels que $ |\cc|\gg \left(\frac{x_{i_{0}}y_{j_{0}}}{\sqrt{B}}\right)^{1+\delta} $ sera n\'egligeable par rapport \`a $ B^{n}\log(B)^{2} $ : en effet, si $ N $ est assez grand on a $ B^{-\frac{N\delta\nu}{2}} \ll B^{-n-\frac{7}{2}}\ll B^{-\frac{3}{2}}|\cc|^{-n-2} $, et donc $ I(\cc) \ll \frac{B^{-1}}{q}|\cc|^{-n-2} $ et on retrouve un terme d'erreur n\'egigeable par rapport \`a $ B^{n}\log(B)^{2} $, comme pr\'ec\'edemment. On supposera donc d\'esormais que $ |\cc|\ll B^{\frac{1}{6}} $ pour les $ x_{i_{0}},y_{j_{0}} $ tels que $ \sqrt{B}^{1+\nu} \geqslant x_{i_{0}}y_{j_{0}} $ et que  $ |\cc|\ll \left(\frac{x_{i_{0}}y_{j_{0}}}{\sqrt{B}}\right)^{1+\delta} $ pour $ \sqrt{B}^{1+\nu} \leqslant x_{i_{0}}y_{j_{0}} $. Remarquons que puisque l'on a suppos\'e $ x_{i_{0}}y_{j_{0}}\leqslant B^{\frac{2}{3}} $, si l'on a $ |\cc|\ll \left(\frac{x_{i_{0}}y_{j_{0}}}{\sqrt{B}}\right)^{1+\delta} $ alors $  |\cc|\ll B^{(\frac{2}{3}-\frac{1}{2})(1+\delta)}=B^{\frac{1}{6}+\frac{1}{6}\delta} $. On \'etudiera donc plus g\'en\'eralement le cas des $ \cc  $ tels que \begin{equation}\label{paramc}
|\cc|\ll B^{\frac{1}{6}+\delta} 
\end{equation}  (avec $ \delta>0 $ arbitrairement petit), et ce quels que soient $ x_{i_{0}} $ et $ y_{j_{0}}  $. \\

Pour pouvoir traiter cette partie de la somme \eqref{termerreur2}, nous allons devoir utiliser des estimations plus fines de $ I(\cc) $, inspir\'ees de celles d\'evelopp\'ees par Heath-Brown dans \cite[Chapitre 8]{HB1}.

\subsection{Estimations plus fines de $ I(\cc) $ }

Remarquons avant tout que, pour l'\'evaluation du terme d'erreur \eqref{termerreur2}, il est possible de restreindre la somme sur $ q $ aux entiers $ q\leqslant x_{i_{0}}y_{j_{0}} $. En effet, si l'on ne consid\`ere que les entiers $ q>x_{i_{0}}y_{j_{0}} $, on utilise alors la majoration du lemme \ref{lemmemajo1} en prenant $ N=n+2 $, et on obtient alors, dans \eqref{termerreur2}, un terme d'erreur 
\begin{multline}\label{majosimple}c_{Q}B^{n}\sum_{x_{i_{0}},y_{j_{0}}}\frac{\poids(x_{i_{0}},y_{j_{0}})}{x_{i_{0}}^{n+1}y_{j_{0}}^{n+1}}\sum_{\substack{(x_{i})_{i\neq i_{0}} \\ (y_{j})_{j\neq j_{0}} }}\poids(\xx,\yy)\sum_{\cc\neq \0} \\ \sum_{x_{i_{0}}y_{j_{0}}<q\leqslant Q}q^{-n-1}|S_{q}(\xx,\yy)(\cc)|\frac{Q}{q}\left(\frac{x_{i_{0}}y_{j_{0}}}{\sqrt{B}|\cc|}\right)^{n+2} \\ =c_{Q}B^{\frac{n}{2}-\frac{1}{2}} \sum_{x_{i_{0}},y_{j_{0}}}\poids(x_{i_{0}},y_{j_{0}})x_{i_{0}}y_{j_{0}}\sum_{\cc\neq \0}\frac{1}{|\cc|^{n+2} } \\ \sum_{x_{i_{0}}y_{j_{0}}<q\leqslant Q}q^{-n-2}\sum_{\substack{(x_{i})_{i\neq i_{0}} \\ (y_{j})_{j\neq j_{0}}}}\poids(\xx,\yy)|S_{q}(\xx,\yy)(\cc)|. \end{multline}

Remarquons \`a pr\'esent que l'on a : \begin{multline}\label{qpetit} \sum_{\substack{(x_{i})_{i\neq i_{0}} \\ (y_{j})_{j\neq j_{0}}}}\poids(\xx,\yy)|S_{q}(\xx,\yy)(\cc)| \\  =\sum_{\substack{(x_{i})_{i\neq i_{0}} \\ (y_{j})_{j\neq j_{0}}}}\poids(\xx,\yy)\left|\sum_{d\in (\ZZ/q\ZZ)^{\ast}}\sum_{\bb\in (\ZZ/q\ZZ)^{n+1}}e_{q}(d\xx.\yy.\bb+\cc.\bb)\right| \\  \leqslant\sum_{d\in (\ZZ/q\ZZ)^{\ast}}\sum_{\substack{(x_{i})_{i\neq i_{0}} \\ (y_{j})_{j\neq j_{0}}}}\poids(\xx,\yy)\left|\sum_{\bb\in (\ZZ/q\ZZ)^{n+1}}e_{q}(d\xx.\yy.\bb+\cc.\bb)\right|\\  \leqslant\sum_{d\in (\ZZ/q\ZZ)^{\ast}}\sum_{\substack{(x_{i})_{i\neq i_{0}} \\ (y_{j})_{j\neq j_{0}}}}\poids(\xx,\yy)\prod_{k=0}^{n}\sum_{b\in \ZZ/q\ZZ}e_{q}(dx_{k}y_{k}b+c_{k}b) .\end{multline} Pour $ d $ fix\'e, la somme $ \sum_{b\in \ZZ/q\ZZ}e_{q}(dx_{k}y_{k}b+c_{k}b) $ vaut $ q $ si $ dx_{k}y_{k}+c_{k}\equiv 0 (q) $ (c'est-\`a dire si  $ x_{k}y_{k}\equiv -d^{-1}c_{k} (q)  $) et $ 0 $ sinon. Or, l'\'equation $ x_{k}y_{k}\equiv -d^{-1}c_{k} (q)  $ admet un nombre $ O_{\varepsilon}(q^{\epsilon}) $ de solutions (avec $ \epsilon>0 $ arbitrairement petit) $ (x_{k},y_{k}) $ sur $ \supp\poids(x_{k},y_{k}) $, \'etant donn\'e que, $ |x_{k}y_{k}|\leqslant (1+\varepsilon)^{2}x_{i_{0}}y_{j_{0}} < (1+\varepsilon)^{2}q $ pour tout $ k\in \{0,...,n\} $ et tout $ (x_{k},y_{k}) \in \supp\poids(x_{k},y_{k}) $. Par cons\'equent, $ \prod_{k=0}^{n}\sum_{b\in \ZZ/q\ZZ}e_{q}(dx_{k}y_{k}b+c_{k}b)  $ peut \^etre major\'ee par \[ \left|\prod_{k=0}^{n} \sum_{b\in \ZZ/q\ZZ}e_{q}(dx_{k}y_{k}b+c_{k}b)\right| \ll\prod_{k=0}^{n}q = q^{n+1} \] pour un nombre $ O_{\varepsilon}(q^{\epsilon}) $ de $ (\xx,\yy) \in \supp \poids(\xx,\yy) $ et vaut $ 0 $ pour tous les autres valeurs de $ (\xx,\yy) $. On peut donc \'ecrire la majoration : 
\[  \sum_{\substack{(x_{i})_{i\neq i_{0}} \\ (y_{j})_{j\neq j_{0}} }}\poids(\xx,\yy)\left|S_{q}(\xx,\yy)(\cc)\right| \ll \sum_{d\in (\ZZ/q\ZZ)^{\ast}}q^{n+1+\epsilon}\ll q^{n+2+\epsilon}. \] On obtient donc 
\[ \eqref{majosimple} \ll B^{\frac{n}{2}-\frac{1}{2}}\sum_{x_{i_{0}},y_{j_{0}}}\poids(x_{i_{0}},y_{j_{0}})x_{i_{0}}y_{j_{0}}\sum_{\cc\neq \0}\frac{1}{|\cc|^{n+2} }  \sum_{x_{i_{0}}y_{j_{0}}<q\leqslant Q}q^{\epsilon} \]
Puis en remarquant que $ \sum_{\cc\neq \0}\frac{1}{|\cc|^{n+1+\nu} } =O(1) $, que $ \sum_{x_{i_{0}}y_{j_{0}}<q\leqslant Q}q^{\epsilon}\ll Q^{1+\epsilon}= B^{\frac{1}{2}+\frac{\epsilon}{2}} $, et que $ \sum_{x_{i_{0}},y_{j_{0}}}\poids(x_{i_{0}},y_{j_{0}})x_{i_{0}}y_{j_{0}}\ll B^{\frac{4}{3}}\log(B) $ (rappelons que sur $ \supp\poids(x_{i_{0}},y_{j_{0}}) $, on a $ x_{i_{0}}y_{j_{0}}\leqslant B^{\frac{2}{3}} $), on obtient finalement \[  \eqref{majosimple} \ll B^{\alpha} \] avec \[ \alpha=\frac{n}{2}-\frac{1}{2}+\frac{1}{2}+\frac{4}{3}+\epsilon=\frac{n}{2}+\frac{4}{3}+\epsilon, \] qui est du type $ O(B^{n}) $ pour tout $ n\geqslant 3 $, et donc la somme sur $ q>x_{i_{0}}y_{j_{0}} $ donne un terme n\'egligeable par rapport au terme principal qui, rappelons-le, sera du type $ O(B^{n}\log(B)^{2}) $. On pourra donc restreindre la somme sur $ q $ aux entiers $ q\leqslant x_{i_{0}}y_{j_{0}} $.\\

Pour simplifier les notations, on notera \`a partir d'ici \nomenclature{$ \ww $}{\eqref{ww} $ =\frac{P\cc}{q}=\frac{B\cc}{x_{i_{0}}y_{j_{0}}q} $}\begin{equation}\label{ww} \ww=\frac{P\cc}{q}=\frac{B\cc}{x_{i_{0}}y_{j_{0}}q}, \end{equation} de sorte que $ \phi(\uu,t)=t\frac{\xx.\yy.\uu}{x_{i_{0}}y_{j_{0}}}-\ww.\uu $. D'apr\`es la formule \eqref{tpetit} appliqu\'ee \`a $ M=N+1 $, et la majoration $ |p(t)| \ll \frac{Q}{q} $, on a  \begin{equation} \int_{|t|\leqslant\frac{|\ww|}{3}}p(t)\int_{\RR^{n+1}}\psi(\uu)e(\phi(\uu,t))d\uu dt \ll \frac{Q}{q}|\ww|^{-N}=\frac{Q}{q}\left(\frac{q}{P|\cc|}\right)^{N}, \end{equation} pour tout $ N\geqslant 0 $. Donc en prenant $ N=n+2 $, on remarque que cette partie de l'int\'egrale $ I(\cc) $ fournit un terme d'erreur :

\begin{multline}\label{integ1} c_{Q}B^{n}\sum_{x_{i_{0}},y_{j_{0}}}\frac{\poids(x_{i_{0}},y_{j_{0}})}{x_{i_{0}}^{n+1}y_{j_{0}}^{n+1}}\sum_{\substack{(x_{i})_{i\neq i_{0}} \\ (y_{j})_{j\neq j_{0}} }}\poids(\xx,\yy)\sum_{\cc\neq \0} \\ \sum_{q\leqslant Q}q^{-n-1}|S_{q}(\xx,\yy)(\cc)\frac{Q}{q}|\left(\frac{q}{P|\cc|}\right)^{n+2} \\ =c_{Q}B^{-\frac{3}{2}} \sum_{x_{i_{0}},y_{j_{0}}}\frac{\poids(x_{i_{0}},y_{j_{0}})}{x_{i_{0}}^{-1}y_{j_{0}}^{-1}}\sum_{\cc\neq \0}\frac{1}{|\cc|^{n+2} } \\ \sum_{q\leqslant Q}\sum_{\substack{ (x_{i})_{i\neq i_{0}} \\ (y_{j})_{j\neq j_{0}}}}\poids(\xx,\yy)|S_{q}(\xx,\yy)(\cc)|. \end{multline}

Or on remarque que si $ q $ est fix\'e, suppos\'e inf\'erieur \`a $ x_{i_{0}} $ et $ y_{j_{0}} $ dans un premier temps, on a que 
\begin{multline}\label{somplus}
\sum_{\substack{ (x_{i})_{i\neq i_{0}} \\ (y_{j})_{j\neq j_{0}}}}\poids(\xx,\yy)|S_{q}(\xx,\yy)(\cc)| \\ =\sum_{\substack{(a_{i})_{i\neq i_{0}}\in (\ZZ/q\ZZ)^{n} \\ (a_{j}')_{j\neq j_{0}}\in (\ZZ/q\ZZ)^{n} }}|S_{q}(\aa,\aa')(\cc)|\sum_{\substack{ x_{i}\equiv a_{i} (q), i\neq i_{0}\\ y_{j}\equiv a_{j}'(q), j\neq j_{0}}}\poids(\xx,\yy) 
\end{multline}
o\`u $ \aa $ (resp. $ \aa' $) d\'esigne le vecteur de $ (\ZZ/q\ZZ)^{n+1} $ dont les coordonn\'ees sont les $ a_{i} $ pour $ i\neq i_{0} $ (resp. $ a_{j}' $ pour $ j\neq j_{0} $) et $ x_{i_{0}}\mod q $ pour $ i=i_{0} $ (resp. $ y_{j_{0}}\mod q $ pour $ j=j_{0} $). En consid\'erant la majoration \[ \sum_{\substack{ x_{i}\equiv a_{i} (q), i\neq i_{0}\\ y_{j}\equiv a_{j}'(q), j\neq j_{0}} }\poids(\xx,\yy) \ll \frac{x_{i_{0}}^{n}y_{j_{0}}^{n}}{q^{2n}} \] due \`a la d\'efinition de $ \poids(\xx,\yy) $, on se ram\`ene au calcul de \begin{equation}\label{somplusplus} \frac{x_{i_{0}}^{n}y_{j_{0}}^{n}}{q^{2n}}\sum_{\substack{(a_{i})_{i\neq i_{0}}\in (\ZZ/q\ZZ)^{n} \\ (a_{j}')_{j\neq j_{0}}\in (\ZZ/q\ZZ)^{n} }}|S_{q}(\aa,\aa')(\cc)| \end{equation} pour tout $ q $ fix\'e. Rappelons avant tout que \begin{align*} S_{q}(\aa,\aa')(\cc) & =\sum_{d\in (\ZZ/q\ZZ)^{\ast}}\sum_{\bb \in (\ZZ/q\ZZ)^{n+1}}e_{q}(d\aa.\aa'.\bb+\cc.\bb) \\ & =\sum_{d\in (\ZZ/q\ZZ)^{\ast}}\prod_{k=0}^{n}\left(\sum_{b\in \ZZ/q\ZZ}e_{q}(da_{k}a_{k}'b+c_{k}b)\right).  \end{align*}

Or on a : \begin{equation}\label{equivalence} \sum_{b\in \ZZ/q\ZZ}e_{q}(da_{k}a_{k}'b+c_{k}b) =\left\{\begin{array}{lcr}
q & \mbox{si} & da_{k}a_{k}'+c_{k} \equiv 0 (q) \\ 
0 & \mbox{sinon}. & 
\end{array}\right. \end{equation}

Pour $ k=i_{0} $ ou $ j_{0} $ on majore trivialement  $ \sum_{b\in \ZZ/q\ZZ}e_{q}(da_{k}a_{k}'b+c_{k}b) $ par $ q $ et d'apr\`es \eqref{equivalence}, on a (dans le cas o\`u $ i_{0}\neq j_{0} $) \begin{multline}\label{somme} \sum_{\substack{(a_{i})_{i\neq i_{0}}\in (\ZZ/q\ZZ)^{n}  \\ (a_{j}')_{j\neq j_{0}}\in (\ZZ/q\ZZ)^{n} }}|S_{q}(\aa,\aa')(\cc)| \\ \ll q^{4}\sum_{d\in (\ZZ/q\ZZ)^{\ast}}\prod_{k\neq i_{0},j_{0}}q.\card \left\{ (a_{k},a_{k}')\in (\ZZ/q\ZZ)^{2} \; | \; a_{k}a_{k}' \equiv -d^{-1}c_{k} (q) \right\} \\ =q^{n+3}\sum_{d\in (\ZZ/q\ZZ)^{\ast}}\prod_{k\neq i_{0},j_{0}}\card \left\{ (a_{k},a_{k}')\in (\ZZ/q\ZZ)^{2} \; | \; a_{k}a_{k}' \equiv -d^{-1}c_{k} (q) \right\}. \end{multline}  

Nous allons donc chercher \`a \'evaluer \[ A_{k}(q)=\card \left\{ (a_{k},a_{k}')\in (\ZZ/q\ZZ)^{2} \; | \; a_{k}a_{k}' \equiv d^{-1}c_{k} (q) \right\}. \] Notons $ q=p_{1}^{e_{1}}...p_{r}^{e_{r}} $ la d\'ecomposition de $ q $ en produit de facteurs premiers. On a par le th\'eor\`eme chinois : \[ A_{k}(q)=\prod_{l=1}^{r}A_{k}(p_{l}^{e_{l}}). \]  

On \'evalue alors $ A_{k}(p^{e}) $ pour tout $ p $ premier et $ e\in \NN $. Supposons que $ v_{p}(a_{k}')=f_{k} $ (avec $ 0\leqslant f_{k} \leqslant e) $. Alors on a que $ a_{k}'=m_{k}p^{f_{k}} $ avec $ m_{k}\in (\ZZ/p^{e}\ZZ)^{\ast} $. Donc, si $ a_{k}a_{k}' \equiv -d^{-1}c_{k} (p^{e}) $ alors $ a_{k}p^{f_{k}}\equiv -m_{k}^{-1}d^{-1}c_{k} (p^{e}) $. On a alors au plus $ p^{f_{k}} $ valeurs possibles pour $ a_{k} $ : en effet $ a_{k}p^{f_{k}}\equiv -m_{k}^{-1}d^{-1}c_{k} (p^{e}) $ admet une solution lorsque $ c_{k}\mod q =c_{k}'p^{f_{k}} $ (pour un certain $ c_{k}' \in \ZZ/p^{e}\ZZ $), et on aura alors $ (a_{k}+m_{k}^{-1}d^{-1}c_{k}')p^{f_{k}}\equiv 0 (p^{e}) $, donc $ p^{e-f_{k}} $ divise $ a_{k}-m_{k}^{-1}d^{-1}c_{k}'$, et on a donc au plus $ p^{f_{k}} $ valeurs possibles pour $ a_{k}\in \ZZ/p^{e}\ZZ $. D'autre part, puisque dans $ \ZZ/p^{e}\ZZ $ il y a $ p^{e-f_{k}} $ \'el\'ements $ a_{k}' $ tels que $ v_{p}(a_{k}')\geqslant f_{k} $, on conclut que :\[ A_{k}(p^{l})\ll\sum_{f_{k}=0}^{e}p^{e-f_{k}}p^{f_{k}}=(e+1)p^{e}. \]
Par cons\'equent, on a que \[ A_{k}(q)=\prod_{l=1}^{r}A_{k}(p_{l}^{e_{l}}) \ll \underbrace{\prod_{l=1}^{r}p_{l}^{e_{l}}}_{=q}\prod_{l=1}^{r}(e_{l}+1). \] Or $ \prod_{l=1}^{r}(e_{l}+1) $ est le nombre de diviseurs de $ q $ qui est du type $ O(q^{\epsilon}) $ pour tout $ \epsilon>0 $. On a donc finalement $ A_{k}(q)=O(q^{1+\epsilon}) $. En utilisant ce r\'esultat dans \eqref{somme}, on obtient le lemme ci-dessous :

\begin{lemma}\label{lemmemoi} On a pour tout $ \epsilon>0 $ :
 \[ \sum_{\substack{(a_{i})_{i\neq i_{0}}\in (\ZZ/q\ZZ)^{n}  \\ (a_{j}')_{j\neq j_{0}}\in (\ZZ/q\ZZ)^{n} }}|S_{q}(\aa,\aa')(\cc)| \ll q^{3+2n+\epsilon}. \]
\end{lemma}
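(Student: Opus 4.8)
The plan is to combine the reduction already carried out in \eqref{somme} with the pointwise bound on $A_k(q)$ just obtained. Recall that, in the generic case $i_0\neq j_0$, \eqref{somme} gives
\[ \sum_{\substack{(a_i)_{i\neq i_0}\in (\ZZ/q\ZZ)^{n} \\ (a'_j)_{j\neq j_0}\in (\ZZ/q\ZZ)^{n}}} |S_q(\aa,\aa')(\cc)| \ll q^{n+3}\sum_{d\in (\ZZ/q\ZZ)^{\ast}}\prod_{k\neq i_0,j_0} A_k(q), \]
the prefactor $q^{n+3}$ accounting for the trivial bound $q$ on each of the two factors $k=i_0,j_0$ of $S_q$, for the factor $q^{n-1}$ coming via \eqref{equivalence} from the $n-1$ indices $k\neq i_0,j_0$, and for the two free residue sums over $a'_{i_0}$ and $a_{j_0}$, which carry no congruence constraint.

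First I would insert the bound $A_k(q)=O_\epsilon(q^{1+\epsilon})$ — valid for every index $k$ and every $\epsilon>0$ — which results from the Chinese remainder theorem together with the prime-power estimate $A_k(p^e)\ll(e+1)p^e$ and the divisor bound. Since there are exactly $n-1$ indices $k$ with $k\neq i_0,j_0$, this yields $\prod_{k\neq i_0,j_0}A_k(q)\ll q^{(n-1)(1+\epsilon)}$, uniformly in $d$. Then I would bound the outer sum over $d$ by $\card(\ZZ/q\ZZ)^{\ast}=\varphi(q)\leqslant q$. Multiplying the three contributions gives
\[ \sum_{\substack{(a_i)_{i\neq i_0} \\ (a'_j)_{j\neq j_0}}} |S_q(\aa,\aa')(\cc)| \ll q^{n+3}\cdot q\cdot q^{(n-1)(1+\epsilon)} = q^{2n+3+(n-1)\epsilon}, \]
and since $n$ is fixed one may rename $(n-1)\epsilon$ as $\epsilon$, which is precisely the asserted bound $q^{3+2n+\epsilon}$.

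It then remains to dispose of the degenerate case $i_0=j_0$, not covered by \eqref{somme}. There only the single pair $(a_{i_0},a'_{i_0})$ is pinned down, while the $n$ pairs $(a_k,a'_k)$ with $k\neq i_0$ are entirely free; running the same computation produces a trivial factor $q$ for $k=i_0$, a factor $q^{n}$ from \eqref{equivalence}, no free-variable surplus, and $\prod_{k\neq i_0}A_k(q)\ll q^{n(1+\epsilon)}$, so the left-hand side is $\ll q^{n+1}\cdot q\cdot q^{n(1+\epsilon)} = q^{2n+2+n\epsilon}$, which is smaller than $q^{3+2n+\epsilon}$; hence the lemma holds a fortiori in that case. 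As for the difficulty: no genuine analytic obstacle remains, since the evaluation of $A_k(p^e)$ — the only nontrivial input — is already in hand. The one point requiring care is the combinatorial accounting of which of the $2(n+1)$ residue classes $a_k,a'_k$ are fixed and which are summed freely, so that the power of $q$ comes out right; in particular it is crucial that the product in \eqref{somme} runs over exactly $n-1$ congruence-constrained pairs, since an off-by-one there would overshoot the target exponent.
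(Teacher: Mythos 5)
Your argument is correct and coincides with the paper's own proof: the reduction \eqref{somme} with the prefactor $q^{n+3}$, the insertion of $A_{k}(q)=O_{\epsilon}(q^{1+\epsilon})$ obtained from the Chinese remainder theorem, the prime-power bound $A_{k}(p^{e})\ll (e+1)p^{e}$ and the divisor bound, and finally the estimate $\varphi(q)\leqslant q$ for the sum over $d$. Your treatment of the degenerate case $i_{0}=j_{0}$, yielding the sharper exponent $2n+2+\epsilon$, is exactly the content of the remark that follows the lemma in the paper.
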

\begin{rem}
ce r\'esultat est vrai m\^eme lorsque $ i_{0}=j_{0} $ : en effet, dans ce cas, on majore trivialement la somme $ \sum_{b\in \ZZ/q\ZZ}e_{q}(dx_{i_{0}}y_{i_{0}}b+c_{i_{0}}b) $ par $ q $ et on obtient comme dans \eqref{somme} : \begin{multline*} \sum_{\substack{ (a_{i})_{i\neq i_{0}}\in (\ZZ/q\ZZ)^{n}  \\ (a_{j}')_{j\neq j_{0}}\in (\ZZ/q\ZZ)^{n}}}|S_{q}(\aa,\aa')(\cc)| \\   \ll q^{n+1}\sum_{d\in (\ZZ/q\ZZ)^{\ast}}\prod_{k\neq i_{0}}\card \left\{ (a_{k},a_{k}')\in (\ZZ/q\ZZ)^{2} \; | \; a_{k}a_{k}' \equiv -d^{-1}c_{k} (q) \right\} \\  = q^{n+1}\sum_{d\in (\ZZ/q\ZZ)^{\ast}}\prod_{k\neq i_{0}}A_{k}(q) \\   \ll q^{n+1}\sum_{d\in (\ZZ/q\ZZ)^{\ast}}q^{n+\epsilon} \ll q^{2+2n+\epsilon}.\end{multline*}
\end{rem}

En appliquant ce lemme ainsi que \eqref{somplusplus} dans la formule \eqref{integ1}, on obtient \begin{multline*} \eqref{integ1} \ll c_{Q}B^{-\frac{3}{2}}\sum_{x_{i_{0}},y_{j_{0}}}\frac{\poids(x_{i_{0}},y_{j_{0}})}{x_{i_{0}}^{-1}y_{j_{0}}^{-1}}\sum_{\cc\neq \0}\frac{1}{|\cc|^{n+2} } \sum_{q\ll Q}\frac{x_{i}^{n}y_{j}^{n}}{q^{2n}}q^{3+2n+\epsilon} \\ = c_{Q}B^{-\frac{3}{2}}\sum_{x_{i_{0}},y_{j_{0}}}\poids(x_{i_{0}},y_{j_{0}})x_{i_{0}}^{n+1}y_{j_{0}}^{n+1}\sum_{\cc\neq \0}\frac{1}{|\cc|^{n+2} } \sum_{q\leqslant Q}q^{3+\epsilon} \\ \ll  B^{\frac{1}{2}+\frac{\epsilon}{2}}\sum_{x_{i_{0}},y_{j_{0}}}\poids(x_{i_{0}},y_{j_{0}})x_{i_{0}}^{n+1}y_{j_{0}}^{n+1} \end{multline*}
(en majorant $ \sum_{q\leqslant Q}q^{3+\epsilon} $ par $ Q^{4+\epsilon}=B^{2+\frac{\epsilon}{2}} $). Puis en majorant \linebreak  $ \sum_{x_{i_{0}},y_{j_{0}}}\poids(x_{i_{0}},y_{j_{0}})x_{i_{0}}^{n+1}y_{j_{0}}^{n+1} $ par $ B^{\frac{2}{3}(n+2)}\log(B) $ (car $ x_{i_{0}}y_{j_{0}}\leqslant B^{\frac{2}{3}} $ sur \linebreak $ \supp\poids(x_{i_{0}},y_{j_{0}}) $), on trouve finalement que la partie $ |t|\leqslant \frac{|\ww|}{3} $ de l'int\'egrale $ I(\cc) $ fournit un terme d'erreur du type $ O(B^{\frac{2}{3}n+\frac{11}{6}+\frac{\epsilon}{2}}\log(B)) $ (en restreignant la somme \eqref{termerreur2} aux $ q\leqslant \max\{x_{i_{0}},y_{j_{0}}\} $) qui est bien un $ O(B^{n}) $ pour tout $ n\geqslant 6 $ et $ \epsilon $ assez petit. \\

Lorsque $ q $ est tel que $ x_{i_{0}}\leqslant q $ et $ y_{j_{0}}>q $, on a que : 
\[ \sum_{\substack{(x_{i})_{i\neq i_{0}} \\ (y_{j})_{j\neq j_{0}} }}\poids(\xx,\yy)|S_{q}(\xx,\yy)(\cc)|=\sum_{\substack{(x_{i})_{i\neq i_{0}} \\ (a_{j}')_{j\neq j_{0}}\in (\ZZ/q\ZZ)^{n} }}\poids(\xx)|S_{q}(\xx,\aa')(\cc)|\sum_{y_{j}\equiv a_{j}'(q)}\poids(\yy), \] o\`u $ \aa'\in (\ZZ/q\ZZ)^{n+1} $ de $ j_{0}^{\grave{e}me} $ terme \'egal \`a $ y_{j_{0}} \mod q $. On majore dans ce cas la deuxi\`eme somme par $ \frac{y_{j_{0}}^{n}}{q^{n}} $. Il reste alors \`a estimer \[ \sum_{\substack{(x_{i})_{i\neq i_{0}} \\ (a_{j}')_{j\neq j_{0}}\in (\ZZ/q\ZZ)^{n}  }}\poids(\xx)|S_{q}(\xx,\aa')(\cc)|. \]

Pour cela, on r\'e\'ecrit cette somme sous la forme : \begin{equation}\label{deuxeq} \sum_{d\in (\ZZ/q\ZZ)^{\ast}}\prod_{k=0}^{n}\sum_{\substack{x_{k} \in \supp(\poids(x_{k})) \\ a_{k}'\in \ZZ/q\ZZ}}\poids(x_{k})\left|\sum_{b\in \ZZ/q\ZZ }e_{q}(dx_{k}a_{k}'b+c_{k}b)\right|. \end{equation}
En majorant le facteur correspondant \`a $ k=i_{0} $ par $ q^{2} $ (la variable $ x_{k} $ \'etant alors fix\'ee \'egale \`a $ x_{i_{0}} $) et le facteur $ k=j_{0} $ par $ qx_{i_{0}} $ ($ a_{k}' $ \'etant alors fix\'e \'egal \`a $ y_{j_{0}} $), on trouve (pour le cas o\`u $ i_{0}\neq j_{0} $) que : 
\begin{multline*} \eqref{deuxeq} \ll q^{n+2}x_{i_{0}} \sum_{d\in (\ZZ/q\ZZ)^{\ast}}\prod_{k\neq i_{0},j_{0}} \card\{ (x_{k},a_{k}'
)\in \supp(\poids(x_{k}))\times \ZZ/q\ZZ \\ \; | \; x_{k}a_{k}'\equiv -d^{-1}c_{k} (q)\}.  \end{multline*}
On souhaite donc \'evaluer \[ B_{k}(q)=\card\{ (x_{k},a_{k}'
)\in \supp(\poids(x_{k}))\times \ZZ/q\ZZ \\ \; | \; x_{k}a_{k}'\equiv -d^{-1}c_{k} (q) \}. \]
On consid\`ere alors la d\'ecomposition $ q=p_{1}^{e_{1}}...p_{r}^{e_{r}} $ de $ q $ en produit de facteurs premiers. Si l'on fixe $ x_{k}\in \supp(\poids(x_{k})) $, et si $ f_{l}=\min\{v_{p_{l}}(x_{k}),e_{l}\} \in \{0,1,...,e_{l}\} $, pour tout $ l \in \{ 1,...,r\} $ alors l'\'equation $ x_{k}a_{k}'\equiv -d^{-1}c_{k} (q) $ admet au plus $ p_{1}^{f_{1}}...p_{r}^{f_{r}} $ solutions $ a_{k}' \in \ZZ/q\ZZ $ (d'apr\`es le th\'eor\`eme des restes chinois). Par ailleurs, si $ f_{l}=\min\{v_{p_{l}}(x_{k}),e_{l}\} $ pour tout $ l $, on a alors $ p_{1}^{f_{1}}...p_{r}^{f_{r}}|x_{k} $. Donc pour chaque $ r $-uplet $ (f_{1},...,f_{r})\in \{1,...,e_{1}\}\times...\times  \{1,...,e_{r}\} $ on aura au plus $ \frac{x_{i_{0}}}{ p_{1}^{f_{1}}...p_{r}^{f_{r}}} $ entiers $ x_{k} $ v\'erifiant la condition $ f_{l}=\min\{v_{p_{l}}(x_{k}),e_{l}\} $. On peut donc donner l'estimation de $ B_{k}(q) $ ci-dessous :

 \begin{align*} B_{k}(q) & \ll \sum_{\substack{f_{l}\leqslant e_{l} \\ l\in \{ 1,...,r\} }}\sum_{\substack{x_{k}\in \supp(\poids(x_{k}))\\ p_{1}^{f_{1}}...p_{r}^{f_{r}}|x_{k} }} p_{1}^{f_{1}}...p_{r}^{f_{r}} \\ & \ll  \sum_{\substack{f_{l}\leqslant e_{l} \\ l\in  \{ 1,...,r\}}}\frac{x_{i_{0}}}{ p_{1}^{f_{1}}...p_{r}^{f_{r}}}.p_{1}^{f_{1}}...p_{r}^{f_{r}} \\ & = x_{i_{0}}(e_{1}+1)...(e_{r}+1) \ll q^{\epsilon_{0}}x_{i_{0}}. \end{align*}
 
 En reprenant la majoration \eqref{deuxeq}, on trouve : \begin{equation}\label{inegmoi}  \sum_{\substack{(x_{i})_{i\neq i_{0}} \\ (a_{j}')_{j\neq j_{0}}\in (\ZZ/q\ZZ)^{n}  }}\poids(\xx)|S_{q}(\xx,\aa')(\cc)| \ll q^{n+2}x_{i_{0}}\sum_{d \in (\ZZ/q\ZZ)^{\ast}} x_{i_{0}}^{n-1}q^{\epsilon} \ll q^{n+3+\epsilon}x_{i_{0}}^{n}, \end{equation} avec $ \epsilon=(n-1)\epsilon_{0} >0 $ arbitrairement petit. On obtient donc finalement la majoration (pour $ i_{0}\neq j_{0} $):  

\begin{equation}\label{lemmemoi2} \sum_{\substack{(x_{i})_{i\neq i_{0}} \\ (y_{j})_{j\neq j_{0}} }}\poids(\xx,\yy)|S_{q}(\xx,\yy)(\cc)|\ll q^{3+\epsilon}x_{i_{0}}^{n}y_{j_{0}}^{n} \end{equation} lorsque $ q\geqslant x_{i_{0}} $ et $ q<y_{j_{0}} $. \begin{rem} Il n'est pas difficile de voir, en effectuant les m\^emes calculs, que cette majoration reste valable dans le cas $ i_{0}=j_{0} $. En fait pour $ k\neq i_{0} $ on a la m\^eme majoration des $ B_{k}(q) $, et pour $ k=i_{0}=j_{0} $, on prend la majoration triviale $ q $, et on trouve alors \[ \sum_{\substack{(x_{i})_{i\neq i_{0}} \\ (y_{j})_{j\neq i_{0}} }}\poids(\xx,\yy)|S_{q}(\xx,\yy)(\cc)|\ll q^{2+\epsilon}x_{i_{0}}^{n}y_{j_{0}}^{n}. \]\end{rem}

Remarquons que le r\'esultat \eqref{inegmoi} et le lemme \ref{lemmemoi} permettent d'\'etablir une majoration vraie pour toute valeur de $ x_{i_{0}} $ :
\begin{lemma}\label{lemmegeneral}
Pour tout $ \epsilon>0 $ :
 \[  \sum_{\substack{(x_{i})_{i\neq i_{0}} \\ (a_{j}')_{j\neq j_{0}}\in (\ZZ/q\ZZ)^{n} }}\poids(\xx)|S_{q}(\xx,\aa')(\cc)|  \ll q^{n+3+\epsilon}x_{i_{0}}^{n}.\]
\end{lemma}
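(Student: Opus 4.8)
The plan is to establish the estimate by splitting according to whether $q\geqslant x_{i_{0}}$ or $q<x_{i_{0}}$, and in each range to appeal to a bound already proved above: the estimate~\eqref{inegmoi} when $q\geqslant x_{i_{0}}$, and Lemma~\ref{lemmemoi} when $q<x_{i_{0}}$. For the first range there is nothing left to do: \eqref{inegmoi} was obtained precisely in the regime $x_{i_{0}}\leqslant q$ (its proof starts from~\eqref{deuxeq} and involves only the variables $x_{i}$ and the residues $a_{j}'$, so the relative size of $q$ and $y_{j_{0}}$ plays no role in it), and it gives exactly
\[ \sum_{\substack{(x_{i})_{i\neq i_{0}} \\ (a_{j}')_{j\neq j_{0}}\in (\ZZ/q\ZZ)^{n} }}\poids(\xx)\,|S_{q}(\xx,\aa')(\cc)| \ll q^{n+3+\epsilon}x_{i_{0}}^{n}. \]

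Now suppose $q<x_{i_{0}}$. As in~\eqref{somplus}, I would decompose the sum by grouping the $x_{i}$, $i\neq i_{0}$, into residue classes modulo $q$; since $S_{q}(\xx,\aa')(\cc)$ depends on $\xx$ only through $\xx\bmod q$, this gives
\[ \sum_{\substack{(x_{i})_{i\neq i_{0}} \\ (a_{j}')_{j\neq j_{0}} }}\poids(\xx)\,|S_{q}(\xx,\aa')(\cc)| = \sum_{\substack{(a_{i})_{i\neq i_{0}}\in (\ZZ/q\ZZ)^{n} \\ (a_{j}')_{j\neq j_{0}}\in (\ZZ/q\ZZ)^{n} }}|S_{q}(\aa,\aa')(\cc)|\sum_{x_{i}\equiv a_{i}(q),\, i\neq i_{0}}\poids(\xx), \]
where $\aa$ (resp.\ $\aa'$) is the vector of $(\ZZ/q\ZZ)^{n+1}$ with $i$-th coordinate $a_{i}$ for $i\neq i_{0}$ and $x_{i_{0}}\bmod q$ for $i=i_{0}$ (resp.\ $a_{j}'$ for $j\neq j_{0}$ and $y_{j_{0}}\bmod q$ for $j=j_{0}$). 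On $\supp(\poids(\xx))$ each $x_{i}$ with $i\neq i_{0}$ satisfies $|x_{i}|\ll x_{i_{0}}$, so a fixed class modulo $q$ contains $\ll x_{i_{0}}/q+1\ll x_{i_{0}}/q$ such integers (this is the only place the hypothesis $q<x_{i_{0}}$ enters), and since $\poids(\xx)\ll 1$ the inner sum is $\ll (x_{i_{0}}/q)^{n}$. Feeding Lemma~\ref{lemmemoi} into the resulting bound then gives
\[ \sum_{\substack{(x_{i})_{i\neq i_{0}} \\ (a_{j}')_{j\neq j_{0}} }}\poids(\xx)\,|S_{q}(\xx,\aa')(\cc)| \ll \frac{x_{i_{0}}^{n}}{q^{n}}\sum_{\substack{(a_{i})_{i\neq i_{0}}\in (\ZZ/q\ZZ)^{n} \\ (a_{j}')_{j\neq j_{0}}\in (\ZZ/q\ZZ)^{n} }}|S_{q}(\aa,\aa')(\cc)| \ll \frac{x_{i_{0}}^{n}}{q^{n}}\,q^{3+2n+\epsilon} = q^{n+3+\epsilon}x_{i_{0}}^{n}. \]
Combining the two ranges proves the bound for every $q$.

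I do not anticipate a genuine obstacle: the whole argument is bookkeeping, with the two auxiliary estimates~\eqref{inegmoi} and Lemma~\ref{lemmemoi} doing all the work. The single point deserving a moment of care is the count of $x_{i}$ lying in a given residue class modulo $q$ in the second range; it rests on the fact — immediate from the definition of $\poids(\xx)$ — that this weight is $O(1)$ and supported where each $|x_{i}|$ is $O(x_{i_{0}})$, so that the factor $q^{-n}$ produced by the residue decomposition precisely reconciles the exponent $q^{3+2n}$ of Lemma~\ref{lemmemoi} with the target exponent $q^{n+3}$.
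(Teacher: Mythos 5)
Your proposal is correct and matches the paper's (very terse) argument: the paper obtains the lemma precisely by combining the bound \eqref{inegmoi} for $q\geqslant x_{i_{0}}$ with Lemme \ref{lemmemoi} for $q<x_{i_{0}}$, the latter via the residue-class decomposition of the $x_{i}$ modulo $q$ exactly as you write it. Your observation that the condition $y_{j_{0}}>q$ plays no role in the derivation of \eqref{inegmoi}, and that $q<x_{i_{0}}$ is needed only to absorb the $+1$ in the count of integers per residue class, correctly identifies the only points requiring care.
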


\begin{rem}
Par les m\^emes calculs, en intervertissant les r\^oles de $ \xx $ et $ \yy $, on obtient exactement le m\^eme type de majoration pour le cas o\`u $ x_{i_{0}}>q $ et $ y_{j_{0}}\leqslant q $. \end{rem}
\begin{rem}\label{remlemme} On a suppos\'e ici que $ \cc\neq \0 $, mais il est clair (voir la d\'emonstration) que le r\'esultat reste vrai pour $ \cc=\0 $. Cela nous sera utile pour traiter le terme principal. \end{rem}

La contribution de la somme sur les $ q<y_{j_{0}} $ et $ q\geqslant x_{i_{0}} $ est donc :   

\begin{multline*}c_{Q}B^{-\frac{3}{2}}\sum_{x_{i_{0}},y_{j_{0}}}\frac{\poids(x_{i_{0}},y_{j_{0}})}{x_{i_{0}}^{-1}y_{j_{0}}^{-1}}\sum_{\cc\neq \0}\frac{1}{|\cc|^{n+2} } \sum_{\substack{  q\leqslant Q \\ x_{i_{0}}\leqslant q<y_{j_{0}}}}x_{i_{0}}^{n}y_{j_{0}}^{n}q^{3+\epsilon} \\ = c_{Q}B^{-\frac{3}{2}}\sum_{x_{i_{0}},y_{j_{0}}}\poids(x_{i_{0}},y_{j_{0}})x_{i_{0}}^{n+1}y_{j_{0}}^{n+1}\sum_{\cc\neq \0}\frac{1}{|\cc|^{n+2} } \sum_{ \substack{q\leqslant Q \\ x_{i_{0}}\leqslant q<y_{j_{0}}}}q^{3+\epsilon} \end{multline*}
(voir \eqref{integ1}) qui est du type $ O(B^{n}) $ pour tout $ n \geqslant 6 $ d'apr\`es les  calculs effectu\'es pour le cas $ q\geqslant x_{i_{0}},y_{j_{0}} $. On obtient alors le m\^eme r\'esultat pour la partie $ x_{i_{0}}>q $ et $ y_{j_{0}}\leqslant q $.\\

Traitons enfin le cas o\`u $ x_{i_{0}},y_{j_{0}}<q $. On consid\`ere \ a nouveau l'\'egalit\'e :

\[ \sum_{\substack{(x_{i})_{i\neq i_{0}} \\ (y_{j})_{j\neq j_{0}}  }}\poids(\xx,\yy)|S_{q}(\xx,\yy)(\cc)|=\sum_{\substack{(x_{i})_{i\neq i_{0}} \\ (a_{j}')_{j\neq j_{0}}\in (\ZZ/q\ZZ)^{n} }}\poids(\xx)|S_{q}(\xx,\aa')(\cc)|\sum_{y_{k}\equiv a_{k}'(q)}\poids(\yy), \]  et on majore la deuxi\`eme somme par $ 1 $ (car $ y_{j_{0}}<q $). On reprend alors la majoration \eqref{inegmoi} (bas\'ee sur la formule \eqref{deuxeq}) pour la premi\`ere somme, et on trouve alors l'estimation : \begin{equation}
\sum_{\substack{(x_{i})_{i\neq i_{0}} \\ (y_{j})_{j\neq j_{0}} }}\poids(\xx,\yy)|S_{q}(\xx,\yy)(\cc)|\ll q^{n+3+\epsilon}x_{i_{0}}^{n}. 
\end{equation}

En appliquant ce r\'esultat \`a la formule \eqref{integ1}, on trouve : \begin{multline*}c_{Q}B^{-\frac{3}{2}}\sum_{x_{i_{0}},y_{j_{0}}}\frac{\poids(x_{i_{0}},y_{j_{0}})}{x_{i_{0}}^{-1}y_{j_{0}}^{-1}}\sum_{\cc\neq \0}\frac{1}{|\cc|^{n+2} } \sum_{\substack{ y_{j_{0}}< q\ll Q \\ x_{i_{0}}< q }}x_{i_{0}}^{n}q^{n+3+\epsilon} \\ \ll B^{-\frac{3}{2}}\sum_{x_{i_{0}},y_{j_{0}}}\poids(x_{i_{0}},y_{j_{0}})x_{i_{0}}^{n+1}y_{j_{0}} \sum_{\substack{ y_{j_{0}}< q\ll Q \\ x_{i_{0}}< q }}q^{n+3+\epsilon}  . \end{multline*}

On effectue alors les majorations \[ \sum_{q \ll Q}q^{n+3+\epsilon}\ll Q^{n+4+\epsilon}\ll B^{\frac{n}{2}+2+\frac{\epsilon}{2}},  \] et \[ \sum_{x_{i_{0}},y_{j_{0}}}\poids(x_{i_{0}},y_{j_{0}})x_{i_{0}}^{n+1}y_{j_{0}} \ll B^{\frac{4}{3}}\sum_{x_{i_{0}}}\poids(x_{i_{0}})x_{i_{0}}^{n} \ll B^{\frac{n+1}{3}+\frac{4}{3}}, \] (en rappelant que $ x_{i_{0}}y_{j_{0}}\leqslant B^{\frac{2}{3}} $ et $ x_{i_{0}}\leqslant B^{\frac{1}{3}} $ d'\`apr\`es \eqref{(4)}). On obtient finalement un terme d'erreur du type $ O(B^{\alpha}) $ avec \[ \alpha=\frac{n}{2}+2+\frac{n+1}{3}+\frac{1}{2}+\frac{4}{3}+\frac{\epsilon}{2}=\frac{5n}{6}+\frac{13}{6}+\frac{\epsilon}{2}, \] qui est alors bien un $ O(B^{n}) $ pour tout $ n\geqslant 14 $.\\

 La contribution totale de la somme $ \eqref{integ1} $ est donc du type $ O(B^{n}) $ pour tout $ n\geqslant 21 $\label{page1}. \\

Nous allons \`a pr\'esent \'etudier la contribution de la partie $ |t|\geqslant\frac{|\ww|}{3} $ de l'int\'egrale $ I(\cc) $ que l'on notera $ I'(\cc) $.

\begin{rem}\label{restriction}
 Rappelons que d'apr\`es les remarques formul\'ees au d\'ebut de cette section (cf calculs \eqref{majosimple}, \eqref{qpetit}), on peut restreindre la somme sur $ q $ aux entiers $ q\leqslant x_{i_{0}}y_{j_{0}} $, et on a donc $ x_{i_{0}}\geqslant \sqrt{q} $ ou $ y_{j_{0}}\geqslant \sqrt{q} $. Par sym\'etrie, on restreint la somme aux entiers $ x_{i_{0}},y_{j_{0}} $ tels que $ x_{i_{0}}\leqslant y_{j_{0}} $, et on a alors $ y_{j_{0}}\geqslant \sqrt{q} $.
 \end{rem}
 
  Nous allons donc chercher \`a \'evaluer (en reprenant la majoration \eqref{termerreur2}) : \begin{equation}\label{nouveq} c_{Q}B^{n}\sum_{x_{i_{0}},y_{j_{0}}}\frac{\poids(x_{i_{0}},y_{j_{0}})}{x_{i_{0}}^{n+1}y_{j_{0}}^{n+1}}\sum_{\substack{(x_{i})_{i\neq i_{0}} \\ (y_{j})_{j\neq j_{0}}} }\poids(\xx,\yy)\sum_{\substack{\cc\neq \0 \\ |\cc|\ll B^{\frac{1}{6}+\delta}}}\sum_{q\ll Q}q^{-n-1}S_{q}(\xx,\yy)(\cc)I'(\cc) \end{equation}(rappelons que l'on ne s'int\'eresse qu'aux $ \cc\in \ZZ^{n+1} $ tels que $ |\cc|\ll B^{\frac{1}{6}+\delta} $, avec $ \delta $ arbitrairement petit (cf. \eqref{paramc}).\\

 On introduit un nouveau param\`etre $ R\geqslant 1 $ que l'on fixera par la suite. Nous allons \`a pr\'esent r\'eexprimer l'int\'egrale  $ I'(\cc) $ \`a l'aide du lemme suivant (voir \cite[Lemme.2]{HB1}) :

\begin{lemma}
Soit $w : \RR^{n+1} \ra \RR $ une fonction poids en $ n+1 $ variables. Alors pour tout $ \delta \in \mathopen] 0,1] $, il existe une fonction poids $ w_{\delta} : \RR^{n+1} \times \RR^{n+1} \ra \RR $ telle que : \[  w(\xx)=\delta^{-n-1}\int_{\RR^{n+1}}w_{\delta}\left( \frac{\xx-\yy}{\delta},\yy\right)d\yy. \]
On a de plus, si $ F(\xx)= w_{\delta}\left( \frac{\xx-\yy}{\delta},\yy\right) $, alors $ \supp(F)\subset \supp(w) $ quel que soit $ \yy $. 
\end{lemma}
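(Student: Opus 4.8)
L'id\'ee est d'\'ecrire $w$ comme une moyenne de bosses lisses --- translat\'ees et dilat\'ees d'une seule fonction poids fixe --- pond\'er\'ees par $w$ elle-m\^eme, ce qui confine automatiquement le support \`a celui de $w$. Je commencerais par fixer une fonction poids auxiliaire $w_{0}^{\ast} : \RR^{n+1}\ra\RR$, lisse, \`a valeurs positives, \`a support compact, et normalis\'ee de sorte que $\int_{\RR^{n+1}} w_{0}^{\ast}(\vv)\,d\vv = 1$ (par exemple $w_{0}^{\ast}(\xx) = c\prod_{i=1}^{n+1}\omega_{0}(x_{i})$ pour une constante $c>0$ convenable). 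Je poserais ensuite
\[ w_{\delta}(\uu,\yy) = w_{0}^{\ast}(\uu)\, w(\yy+\delta\uu), \qquad (\uu,\yy)\in\RR^{n+1}\times\RR^{n+1}. \]

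Il faut alors v\'erifier que $w_{\delta}$ est bien une fonction poids sur $\RR^{n+1}\times\RR^{n+1}$. La r\'egularit\'e $C^{\infty}$ est imm\'ediate comme produit de fonctions lisses. Pour le support : si $w_{\delta}(\uu,\yy)\neq 0$, alors $\uu\in\supp(w_{0}^{\ast})$ (qui est born\'e) et $\yy+\delta\uu\in\supp(w)$, donc $\yy\in\supp(w)-\delta\uu$; comme $0<\delta\leqslant 1$ et que $\uu$ reste born\'e, ces $\yy$ demeurent dans un compact fixe, et $\supp(w_{\delta})$ est donc compact. Enfin, toute d\'eriv\'ee partielle mixte de $w_{\delta}$ est une somme finie de produits d'une d\'eriv\'ee de $w_{0}^{\ast}$ par une d\'eriv\'ee de $w$ \'evalu\'ee en $\yy+\delta\uu$, les d\'erivations par rapport \`a $\uu$ ne produisant que des puissances de $\delta\leqslant 1$; comme $w_{0}^{\ast}$ et $w$ sont des fonctions poids, chacun de ces termes est born\'e, et $w_{\delta}$ aussi (les constantes $A_{\jj}$ obtenues peuvent d\'ependre de $\delta$, ce que la d\'efinition autorise).

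Restent les deux conclusions. En rempla\c{c}ant $\uu$ par $(\xx-\yy)/\delta$ dans la d\'efinition de $w_{\delta}$, on trouve
\[ w_{\delta}\!\left(\frac{\xx-\yy}{\delta},\yy\right) = w_{0}^{\ast}\!\left(\frac{\xx-\yy}{\delta}\right)\, w\!\left(\yy+\delta\cdot\frac{\xx-\yy}{\delta}\right) = w_{0}^{\ast}\!\left(\frac{\xx-\yy}{\delta}\right)\, w(\xx), \]
expression qui, vue comme fonction de $\xx$, contient le facteur $w(\xx)$ et s'annule donc hors de $\supp(w)$, quel que soit $\yy$ : c'est la seconde assertion. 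Pour la premi\`ere, on int\`egre en $\yy$ puis on effectue le changement de variables $\vv=(\xx-\yy)/\delta$, de jacobien $\delta^{n+1}$ :
\[ \delta^{-n-1}\int_{\RR^{n+1}} w_{\delta}\!\left(\frac{\xx-\yy}{\delta},\yy\right)d\yy = \delta^{-n-1}\,w(\xx)\int_{\RR^{n+1}} w_{0}^{\ast}\!\left(\frac{\xx-\yy}{\delta}\right)d\yy = w(\xx)\int_{\RR^{n+1}} w_{0}^{\ast}(\vv)\,d\vv = w(\xx), \]
la derni\`ere \'egalit\'e provenant de la normalisation de $w_{0}^{\ast}$.

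Le seul point demandant un minimum de soin est la v\'erification que $w_{\delta}$ rel\`eve bien de la d\'efinition de fonction poids (compacit\'e du support de $w_{\delta}$ dans l'ensemble des variables $(\uu,\yy)$, et bornes sur toutes ses d\'eriv\'ees partielles mixtes); l'hypoth\`ese $0<\delta\leqslant 1$ fait jouer le facteur d'\'echelle en notre faveur, de sorte qu'aucune difficult\'e s\'erieuse ne se pr\'esente, le reste se ramenant \`a un simple changement de variables.
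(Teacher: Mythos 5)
Votre preuve est correcte et complète : la construction $w_{\delta}(\uu,\yy)=w_{0}^{\ast}(\uu)\,w(\yy+\delta\uu)$ avec un noyau normalisé est exactement celle de la démonstration du Lemme 2 de Heath-Brown \cite{HB1}, que le présent article cite d'ailleurs sans redonner de preuve. Notez au passage que vos bornes sur les dérivées de $w_{\delta}$ sont même uniformes en $\delta$ (les dérivations en $\uu$ ne font apparaître que des puissances de $\delta\leqslant 1$), ce qui est précisément la propriété dont Heath-Brown a besoin dans ses applications.
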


On peut appliquer ce lemme \`a la fonction poids $ \psi(\uu) $. On a alors \[ \psi(\uu)=\delta^{-n-1}\int_{\RR^{n+1}}w_{\delta}\left(\frac{\uu-\vv}{\delta},\vv\right)d\vv \] o\`u $ 0< \delta <1 $ et $ w_{\delta} $ est une nouvelle fonction poids avec un support en $ \vv $ de taille $ O(1) $. Par un changement de variable \nomenclature{$ \uu=\vv+\delta\ss $}{ changement de variable}$ \uu=\vv+\delta\ss $, on trouve \begin{multline*} |I'(\cc)|=\left| \int_{|t|\geqslant\frac{|\ww|}{3}}p(t)\int_{\RR^{n+1}}\psi(\uu)e(\phi(\uu,t))d\uu dt \right| \\ \ll \int_{|t|\geqslant\frac{|\ww|}{3}}|p(t)|\int_{\RR^{n+1}}\left| \int_{\RR^{n+1}}w_{\delta}(\ss,\vv)e(\phi(\vv+\delta\ss,t))d\ss \right| d\vv dt . \end{multline*}

On fixe dans un premier temps les variables $ \vv $ et $ t $. Nous allons d\'emontrer que pour la plupart des valeurs de $ \yy \in \ZZ^{n+1} $, l'int\'egrale \[ \int_{\RR^{n+1}}w_{\delta}(\ss,\vv)e(\phi(\vv+\delta\ss,t))d\ss  \] est n\'egligeable. On pose \nomenclature{$ f(\ss) $}{ $ =\phi(\vv+\delta\ss,t) $}\[ f(\ss)=\phi(\vv+\delta\ss,t)=  t\frac{\xx.\yy.(\vv+\delta\ss)}{x_{i_{0}}y_{j_{0}}}-\ww. (\vv+\delta\ss). \]
Les d\'eriv\'ees partielles secondes de $ f $ sont nulles et on a par ailleurs 
\begin{equation}\label{f(s)} |\nabla f(\ss) |=|\nabla f(\0) |=\delta \max_{0\leqslant k \leqslant n} \left| t\frac{x_{k}y_{k}}{x_{i_{0}}y_{j_{0}}}- \frac{Pc_{k}}{q} \right| \end{equation} (la notation \og $  |.|  $\fg \ d\'esignant ici le max des coordonn\'ees).
D'apr\`es le lemme \ref{lemme10}, on a alors \[ \left|\int_{\RR^{n+1}}w_{\delta}(\ss,\vv)e(\phi(\vv+\delta\ss,t))d\ss\right| \ll R^{-N} \] lorsque $ |\nabla f(\0) |\geqslant R $, pour tout entier $ N\geqslant 0 $. A partir d'ici on fixe $ \delta=|\ww|^{-\frac{1}{2}} $. Pour un $ \xx \in \ZZ^{n+1} $ fix\'e, on dit que $ \yy \in \ZZ^{n+1} $ est un \og bon point \fg \ lorsque $ |\nabla f(\0) |\geqslant R $, c'est-\`a-dire, d'apr\`es \eqref{f(s)}, lorsque : \[ \left| t\frac{x_{k}y_{k}}{x_{i_{0}}y_{j_{0}}}- \frac{Pc_{k}}{q} \right| \geqslant R|\ww|^{\frac{1}{2}} \] pour un certain $ k\in \{ 0,1,...,n\} $ et pour tout $ |t|\geqslant \frac{|\ww|}{3} $.  Pour ces points en consid\'erant $ |p(t)|\ll \left(\frac{q}{Q}|t|\right)^{-2} $ (cf. \eqref{p(t)}) et en rappelant que le support de $ w_{\delta}(\ss,\vv) $ en $ \vv $ est de taille $ O(1) $ on a \[ I'(\cc) \ll \left(\frac{Q}{q}\right)^{2}|\ww|^{-1}R^{-N}=\frac{x_{i_{0}}y_{j_{0}}}{q|\cc|}R^{-N}. \] Les \og mauvais points \fg \ sont les $ \yy $ tels qu'il existe $ t $ tel que $ |t|\geqslant  \frac{|\ww|}{3} $ et tel que \[ \left| t\frac{x_{k}y_{k}}{x_{i_{0}}y_{j_{0}}}- \frac{Pc_{k}}{q} \right| \leqslant R|\ww|^{\frac{1}{2}} \] pour tout $ k $. Pour ces points on majore $ I'(\cc) $ par $ \frac{Q}{q}\left(\frac{x_{i_{0}}y_{j_{0}}}{\sqrt{B}|\cc|}\right)^{M} $ (\`a l'aide du lemme \ref{lemmemajo1} avec un entier $ M\geqslant 0 $ que nous pr\'eciserons). On a alors : \begin{multline}\label{(46)} \eqref{nouveq} \ll \underbrace{B^{n}\sum_{x_{i_{0}},y_{j_{0}}}\frac{\poids(x_{i_{0}},y_{j_{0}})}{x_{i_{0}}^{n}y_{j_{0}}^{n}}\sum_{\substack{\cc\neq \0 \\ |\cc|\ll B^{\frac{1}{6}+\delta}}}\frac{1}{|\cc|}\sum_{(x_{i})_{i\neq i_{0}}}\poids(\xx) \sum_{\substack{(y_{j})_{j\neq j_{0}} \\ \yy \;  \bon }} \poids(\yy)\sum_{q\ll Q}q^{-n-2}|S_{q}(\xx,\yy)(\cc)|R^{-N}}_{(\alpha)} \\ + \underbrace{B^{n+\frac{1}{2}}\sum_{x_{i_{0}},y_{j_{0}}}\frac{\poids(x_{i_{0}},y_{j_{0}})}{x_{i_{0}}^{n+1}y_{j_{0}}^{n+1}}\sum_{\substack{\cc\neq \0 \\ |\cc|\ll B^{\frac{1}{6}+\delta}}}\sum_{(x_{i})_{i\neq i_{0}}}\poids(\xx) \sum_{\substack{ (y_{j})_{j\neq j_{0}}\\ \yy \; \mauvais}} \poids(\yy)\sum_{q\ll Q}q^{-n-2}|S_{q}(\xx,\yy)(\cc)|\left(\frac{x_{i_{0}}y_{j_{0}}}{\sqrt{B}|\cc|}\right)^{M}}_{(\beta)}. \end{multline}

Si l'on choisit $ R=B^{\lambda} $ (avec $ \lambda $ que nous fixerons ult\'erieurement), on remarque qu'en prenant $ N $ assez grand, la partie $ (\alpha) $ de la somme est n\'egligeable (rappelons que nous effectuons la somme sur les $ x_{i}\ll x_{i_{0}} $, $  y_{j} \ll y_{j_{0}} $ pour tous $ i,j $, avec $ x_{i_{0}}y_{j_{0}} \leqslant B^{\frac{2}{3}} $). On s'int\'eressera donc exclusivement \`a la somme $ (\beta) $ dans ce qui va suivre. On peut r\'e\'ecrire cette somme sous la forme :
\begin{multline}\label{mauvais points} B^{n+\frac{1}{2}}\sum_{x_{i_{0}},y_{j_{0}}}\frac{\poids(x_{i_{0}},y_{j_{0}})}{x_{i_{0}}^{n+1}y_{j_{0}}^{n+1}}\sum_{\substack{\cc\neq \0 \\ |\cc|\ll B^{\frac{1}{6}+\delta}}}\sum_{q\ll Q}q^{-n-2}\sum_{\substack{(x_{i})_{i\neq i_{0}}\; ,i\neq i_{0} \\ (a_{j}')_{j\neq j_{0}}\in (\ZZ/q\ZZ)^{n}  }}\poids(\xx)|S_{q}(\xx,\aa')(\cc)| \\ \sum_{ \substack{y_{j}\equiv a_{j}'(q) ,j\neq j_{0} \\ \yy \; \mauvais}} \poids(\yy)\left(\frac{x_{i_{0}}y_{j_{0}}}{\sqrt{B}|\cc|}\right)^{M}. \end{multline}

Nous allons \`a pr\'esent estimer, pour $ \xx $ et $ q $ fix\'es, la somme \[  \sum_{\substack{y_{j}\equiv a_{j}'(q) ,j\neq j_{0} \\ \yy \; \mauvais}} \poids(\yy) \ll \card \left\{ (y_{j})_{j\neq j_{0}}  \; |\; \yy \; \mauvais, \;  y_{j}\equiv a_{j}'(q) \; \forall \;  j\neq j_{0} \right\}.  \]

Rappelons que $ \yy $ est un mauvais point lorsque \[ \left| t\frac{x_{k}y_{k}}{x_{i_{0}}y_{j_{0}}}- w_{k}\right| \leqslant R|\ww|^{\frac{1}{2}} \] pour tout $ k $ et pour un certain $ t $ tel que $ |t|\geqslant\frac{|\ww|}{3} $, c'est-\`a-dire \[ \left| y_{k}- \frac{x_{i_{0}}y_{j_{0}}}{t}\frac{w_{k}}{x_{k}} \right| \leqslant R\frac{x_{i_{0}}y_{j_{0}}}{|t||x_{k}|}|\ww|^{\frac{1}{2}}  \]  
(rappelons que $ |x_{k}|\neq 0 $ pour tout $ k $ sur $ \supp\poids(\xx) $).\\

Lorsque $ \yy $ v\'erifie en plus que $ y_{k}\equiv a_{k}' (q) $ pour tout $ k\neq j_{0} $, On peut \'ecrire $ y_{k}=qm_{k} +a_{k}' $, avec $ m_{k}  \in \ZZ $, et la condition pour que $ \yy $ soit un mauvais point s'\'ecrit alors  
\[ \left| m_{k} + \frac{a_{k}'}{q}- \frac{x_{i_{0}}y_{j_{0}}}{t}\frac{w_{k}}{qx_{k}} \right| \leqslant R\frac{x_{i_{0}}y_{j_{0}}}{|t|q|x_{k}| }|\ww|^{\frac{1}{2}}.  \] 

On a donc \begin{multline*} \card \left\{ (y_{k})_{k\neq j_{0}}  \; |\; \yy \; \mauvais, \;  y_{k}\equiv a_{k}'(q), \; \forall \;  k\neq j_{0} \right\} \\  \leqslant \card \{ (m_{k})_{k\neq j_{0}}\in \ZZ^{n} \; | \;  \exists \;  |t|\geqslant \frac{|\ww|}{3},\\ \;  \forall k\neq j_{0} \; \left| m_{k} - (M_{k}+\frac{1}{t}N_{k}(\xx))\right|\leqslant \frac{R_{k}(\xx)}{|t|}  \} \end{multline*}

o\`u l'on a pos\'e \nomenclature{$ M_{k} $}{\eqref{Mk} $ =-\frac{a_{k}'}{q } $}\begin{equation}\label{Mk}
 M_{k}=-\frac{a_{k}'}{q },
\end{equation} \nomenclature{$ N_{k}(\xx) $}{\eqref{Nk} $ =\frac{x_{i_{0}}y_{j_{0}}w_{k}}{qx_{k}} $} \begin{equation}\label{Nk}
N_{k}(\xx)= \frac{x_{i_{0}}y_{j_{0}}w_{k}}{qx_{k}},
\end{equation} \nomenclature{$ R_{k}(\xx) $}{\eqref{Rk} $ R\frac{x_{i_{0}}y_{j_{0}}}{q|x_{k}| }|\ww|^{\frac{1}{2}} $} \begin{equation}\label{Rk}
R_{k}(\xx)=R\frac{x_{i_{0}}y_{j_{0}}}{q|x_{k}| }|\ww|^{\frac{1}{2}}
\end{equation} 

On a par ailleurs que l'ensemble \[ E= \{ (m_{k})_{k\neq j_{0}}\in \ZZ^{n} \; | \;  \exists \;  |t|\geqslant \frac{|\ww|}{3},\\ \; \left| m_{k} - (M_{k}+\frac{1}{t}N_{k}(\xx))\right|\leqslant \frac{R_{k}(\xx)}{|t|}, \;  \forall k\neq j_{0}  \} \] est inclus dans \[ \left\{ (M_{k})_{\substack{k\neq j_{0}}} + \frac{1}{t}( N_{k}(\xx) + [-R_{k}(\xx),R_{k}(\xx)])_{\substack{k\neq j_{0}}}, \; |t|\geqslant  \frac{|\ww|}{3} \right\}\subset \RR^{n}. \]Remarquons que pour tout $ |t|\geqslant  \frac{|\ww|}{3} $,
\begin{multline*}  \left(M_{k}\right)_{\substack{k\neq j_{0} }} + \frac{1}{t}\left( N_{k}(\xx) + [-R_{k}(\xx),R_{k}(\xx)]\right)_{\substack{k\neq j_{0} }} \\ \subset \left(M_{k}+\frac{1}{t}N_{k}(\xx)\right)_{\substack{k\neq j_{0} }} + \left( \frac{3}{|\ww|}[-R_{k}(\xx),R_{k}(\xx)]\right)_{\substack{k\neq j_{0} }}  \\ \subset  \left(M_{k}+\frac{1}{t}N_{k}(\xx)\right)_{\substack{k\neq j_{0}}} + \left(\left[-\max\left(\frac{3R_{k}(\xx)}{|\ww|},1\right),\max\left(\frac{3R_{k}(\xx)}{|\ww|},1\right) \right]\right)_{\substack{k\neq j_{0} }}=E(t).   \end{multline*}

 On a alors que \[ E\subset \bigcup_{ |t|\geqslant  \frac{|\ww|}{3}}E(t). \] Plus pr\'ecis\'ement, $ \card(E) $ peut \^etre major\'e par le nombre de points entiers contenus dans  le domaine $ \bigcup_{|t|\geqslant  \frac{|\ww|}{3}}E(t) $. Or, puisque $ \max\left(\frac{3R_{k}(\xx)}{|\ww|},1\right)\geqslant 1 $, ce nombre de points entiers peut \^etre major\'e par $ \Vol\left(\bigcup_{ |t|\geqslant  \frac{|\ww|}{3}}E(t)\right) $. On a d'une part que la longueur du segment \[ \bigcup_{|t|\geqslant  \frac{|\ww|}{3}}\left(M_{k}+\frac{1}{t}N_{k}(\xx)\right)_{\substack{k\neq j_{0} }} \] est du type \[ O\left(\frac{\max_{k\neq j_{0}}|N_{k}(\xx)|}{|\ww|}\right)=O\left(\frac{x_{i_{0}}y_{j_{0}}}{q}\right) \] d'apr\`es la d\'efinition de $ N_{k}(\xx) $, et d'autre part pour tout $ t $ : \begin{align*}  \Vol\left(E(t)\right) & = \Vol\left(  (M_{k}+ \frac{1}{t} N_{k}(\xx))_{\substack{k\neq j_{0} }}  + \left(\left[-\max\left(\frac{3R_{k}(\xx)}{|\ww|},1\right),\max\left(\frac{3R_{k}(\xx)}{|\ww|},1\right) \right]\right)_{\substack{k\neq j_{0} }} \right) \\ & = \Vol\left( \left(\left[-\max\left(\frac{3R_{k}(\xx)}{|\ww|},1\right),\max\left(\frac{3R_{k}(\xx)}{|\ww|},1\right) \right]\right)_{\substack{k\neq j_{0} }}  \right)\\ & \ll \prod_{\substack{k\neq j_{0} \\ k \notin \mathcal{I}(\xx)}}\frac{R_{k}(\xx)}{|\ww|}, \end{align*}
avec \begin{equation}\label{I(x)} \mathcal{I}(\xx)=\left\{k\neq j_{0} \; \left| \; \frac{3R_{k}(\xx)}{|\ww|}<1 \right.\right\}. \end{equation}

Par cons\'equent, on peut majorer le volume de $ \bigcup_{ |t|\geqslant  \frac{|\ww|}{3}}E(t) $ et donc le cardinal de $ E $ par (rappelons que l'on a suppos\'e $ q\leqslant x_{i_{0}}y_{j_{0}} $): \begin{align*} \frac{x_{i_{0}}y_{j_{0}}}{q}\prod_{\substack{k\neq j_{0} \\ k \notin  \mathcal{I}(\xx)}}\frac{R_{k}(\xx)}{|\ww|} \end{align*}

 Quitte \`a permuter les variables, on se ram\`ene au cas o\`u \[ \mathcal{I}(\xx)=\{0,1,...,r-1\}=I ,\] avec $ r<j_{0}\leqslant n $. On cherche donc \`a pr\'esent \`a \'evaluer (cf. \eqref{(46)} avec $ M=r $), pour tout $ r $ : \begin{multline}\label{nouvexpr}
B^{n+\frac{1}{2}}\sum_{x_{i_{0}}, y_{j_{0}}}\frac{\poids(x_{i_{0}},y_{j_{0}})}{x_{i_{0}}^{n+1}y_{j_{0}}^{n+1}}\sum_{\substack{\cc\neq \0 \\ |\cc|\ll B^{\frac{1}{6}+\delta}}}\sum_{q=1}^{\infty}q^{-n-2}\sum_{\substack{(x_{i})_{i\neq i_{0}}\\ I(\xx)=I}} \\  \sum_{(a_{j}')_{j\neq j_{0}} \in (\ZZ/q\ZZ)^{n}}\poids(\xx)|S_{q}(\xx,\aa')(\cc)|\card(E)\left(\frac{x_{i_{0}}y_{j_{0}}}{\sqrt{B}|\cc|}\right)^{r} \\ \ll B^{n+\frac{1}{2}}\sum_{x_{i_{0}}, y_{j_{0}}}\frac{\poids(x_{i_{0}},y_{j_{0}})}{x_{i_{0}}^{n+1}y_{j_{0}}^{n+1}}\sum_{\substack{\cc\neq \0 \\ |\cc|\ll B^{\frac{1}{6}+\delta}}}\sum_{q=1}^{\infty}q^{-n-2}\sum_{\substack{(x_{i})_{i\neq i_{0}}\\ I(\xx)=I}} \\  \sum_{(a_{j}')_{j\neq j_{0}} \in (\ZZ/q\ZZ)^{n}}\poids(\xx)|S_{q}(\xx,\aa')(\cc)|\frac{x_{i_{0}}y_{j_{0}}}{q}\prod_{\substack{k\neq j_{0} \\ k \notin \mathcal{I}(\xx)}}\frac{R_{k}(\xx)}{|\ww|}\left(\frac{x_{i_{0}}y_{j_{0}}}{\sqrt{B}|\cc|}\right)^{r}.
\end{multline}

Par le calcul, en rappelant l'expression de $ R_{k}(\xx) $, \eqref{Rk} et celle de $ |\ww| $, \eqref{ww} :

\begin{multline}\label{nouvnom} \frac{x_{i_{0}}y_{j_{0}}}{q}\prod_{\substack{k\neq j_{0} \\ k \notin I}}\frac{R_{k}(\xx)}{|\ww|}  =\frac{x_{i_{0}}y_{j_{0}}}{q}\prod_{\substack{k\neq j_{0} \\ k \notin I}}R\frac{x_{i_{0}}y_{j_{0}}}{q|x_{k}| }|\ww|^{-\frac{1}{2}} \\ =\frac{R^{n-r}x_{i_{0}}^{\frac{3(n-r)}{2}+1}y_{j_{0}}^{\frac{3(n-r)}{2}+1}}{q^{\frac{n-r}{2}+1}|\cc|^{\frac{n-r}{2}}B^{\frac{n-r}{2}}\prod_{\substack{k\neq j_{0} \\ k \notin I}}|x_{k}|}. \end{multline}

Remarquons \`a pr\'esent que dans le cas o\`u $ I $ n'est pas vide (i.e. $ r>0 $), alors, par d\'efinition de $ \mathcal{I}(\xx) $ (cf. \eqref{I(x)}), pour tout $ \xx $ tel que $ \mathcal{I}(\xx)=I $, il existe un certain $ k  $ tel que $ \frac{3R_{k}(\xx)}{|\ww|}\leqslant 1 $, et on a donc dans ce cas, en rappelant que $ \ww=\frac{P|\cc|}{q}=\frac{B|\cc|}{qx_{i_{0}}y_{j_{0}}} $ ) : \[ \frac{3R_{k}(\xx)}{|\ww|}\leqslant \frac{3Rx_{i_{0}}^{\frac{3}{2}}y_{j_{0}}^{\frac{3}{2}}}{B^{\frac{1}{2}}q^{\frac{1}{2}}|x_{k}||\cc|^{\frac{1}{2}}}\leqslant 1, \] et donc en utilisant la majoration triviale $ x_{k}\ll x_{i_{0}} $ : \[ y_{j_{0}}\ll\frac{q^{\frac{1}{3}}|\cc|^{\frac{1}{3}}B^{\frac{1}{3}}}{R^{\frac{2}{3}}x_{i_{0}}^{\frac{1}{3}}}, \]
ce qui va nous permettre de majorer le facteur $ \left(\frac{x_{i_{0}}y_{j_{0}}}{\sqrt{B}|\cc|}\right)^{r}  $ de la somme \eqref{nouvexpr} par \begin{equation}\label{nouvfact} \frac{x_{i_{0}}^{\frac{2r}{3}}q^{\frac{r}{3}}}{R^{\frac{2r}{3}}B^{\frac{r}{6}}|\cc|^{\frac{2r}{3}}}. \end{equation}
On obtient alors, avec \eqref{nouvfact} et \eqref{nouvnom} la majoration suivante de \eqref{nouvexpr}, (qui sera vraie m\^eme pour $ I $ vide, \'etant donn\'e que ce cas correspond \`a $ r=0 $, et donc $ \eqref{nouvfact}= \left(\frac{x_{i_{0}}y_{j_{0}}}{\sqrt{B}|\cc|}\right)^{r} =1 $) :

\begin{multline}\label{nouvexpr2}
\eqref{nouvexpr} \ll  B^{n+\frac{1}{2}}\sum_{x_{i_{0}}, y_{j_{0}}}\frac{\poids(x_{i_{0}},y_{j_{0}})}{x_{i_{0}}^{n+1}y_{j_{0}}^{n+1}}\sum_{\substack{\cc\neq \0 \\ |\cc|\ll B^{\frac{1}{6}+\delta}}}\sum_{q\ll Q}q^{-n-2}\sum_{\substack{(x_{i})_{i\neq i_{0}}\\ \mathcal{I}(\xx)=I}}\sum_{(a_{j}')_{j\neq j_{0}} \in (\ZZ/q\ZZ)^{n}} \\  \poids(\xx)|S_{q}(\xx,\aa')(\cc)| \left(\frac{R^{n-r}x_{i_{0}}^{\frac{3(n-r)}{2}+1}y_{j_{0}}^{\frac{3(n-r)}{2}+1}}{q^{\frac{n-r}{2}+1}|\cc|^{\frac{n-r}{2}}B^{\frac{n-r}{2}}\prod_{\substack{k\neq j_{0} \\ k \notin I}}|x_{k}|}\right)\left(\frac{x_{i_{0}}^{\frac{2r}{3}}q^{\frac{r}{3}}}{R^{\frac{2r}{3}}B^{\frac{r}{6}}|\cc|^{\frac{2r}{3}}}\right)\end{multline} \begin{multline*} = B^{n+\frac{1}{2}}\sum_{x_{i_{0}}, y_{j_{0}}}\frac{\poids(x_{i_{0}},y_{j_{0}})}{x_{i_{0}}^{n+1}y_{j_{0}}^{n+1}}\sum_{\substack{\cc\neq \0 \\ |\cc|\ll B^{\frac{1}{6}+\delta}}}\sum_{q=1}^{\infty}q^{-n-2}\sum_{\substack{(x_{i})_{i\neq i_{0}}\\ \mathcal{I}(\xx)=I}}\sum_{(a_{j}')_{j\neq j_{0}} \in (\ZZ/q\ZZ)^{n}} \\ \frac{\poids(\xx)}{\prod_{\substack{k\neq j_{0} \\ k \notin I}}|x_{k}|}|S_{q}(\xx,\aa')(\cc)|  \left(\frac{R^{n-r}x_{i_{0}}^{\frac{3(n-r)}{2}+1}y_{j_{0}}^{\frac{3(n-r)}{2}+1}}{q^{\frac{n-r}{2}+1}|\cc|^{\frac{n-r}{2}}B^{\frac{n-r}{2}}}\right)\left(\frac{x_{i_{0}}^{\frac{2r}{3}}q^{\frac{r}{3}}}{R^{\frac{2r}{3}}B^{\frac{r}{6}}|\cc|^{\frac{2r}{3}}}\right).
\end{multline*}


Nous allons \`a pr\'esent majorer la somme \[ \sum_{\substack{(x_{i})_{i\neq i_{0}}\\ \mathcal{I}(\xx)=I}}\sum_{(a_{j}')_{j\neq j_{0}} \in (\ZZ/q\ZZ)^{n}} \\ \frac{\poids(\xx)}{\prod_{\substack{k\neq j_{0} \\ k \notin I}}|x_{k}|}|S_{q}(\xx,\aa')(\cc)|, \] (pour $ i_{0}\neq j_{0} $ dans un premier temps) par des m\'ethodes analogues \`a celles que nous avons utilis\'ees pour \'etablir les majorations des lemmes \ref{lemmemoi} et \ref{lemmegeneral}. Supposons dans un premier temps que l'entier $ i_{0} $ n'appartient pas \`a $ I $. On peut alors majorer la somme ci-dessus par \[ \frac{1}{x_{i_{0}}}\sum_{(x_{i})_{i\neq i_{0}}}\sum_{(a_{j}')_{j\neq j_{0}} \in (\ZZ/q\ZZ)^{n}} \\ \frac{\poids(\xx)}{\prod_{\substack{k\neq i_{0},j_{0} \\ k \notin I}}|x_{k}|}|S_{q}(\xx,\aa')(\cc)|, \](on ne tient plus compte de la condition $ \mathcal{I}(\xx)=I $ sur les $ \xx $). En remarquant que cette somme est major\'ee par \begin{multline}\label{preced}\sum_{d\in(\ZZ/q\ZZ)^{\ast}}\left(\prod_{\substack{k\neq i_{0},j_{0} \\ k \notin I}}\sum_{x_{k}}\frac{\poids(x_{k})}{|x_{k}|}\sum_{a_{k}'\in\ZZ/q\ZZ}\left|\sum_{b\in \ZZ/q\ZZ}e_{q}((dx_{k}a_{k}'+c_{k})b)\right|\right) \\ \times  \left(\prod_{k\in I}\sum_{x_{k}}\poids(x_{k})\sum_{a_{k}'\in\ZZ/q\ZZ}\left|\sum_{b\in \ZZ/q\ZZ}e_{q}((dx_{k}a_{k}'+c_{k})b)\right|\right) \\ \times \left(\frac{1}{x_{i_{0}}}\sum_{a_{i_{0}}'\in\ZZ/q\ZZ}\left|\sum_{b\in \ZZ/q\ZZ}e_{q}((dx_{i_{0}}a_{i_{0}}'+c_{i_{0}})b)\right|\right) \\ \times \left(\sum_{x_{j_{0}}}\poids(x_{j_{0}})\left|\sum_{b\in \ZZ/q\ZZ}e_{q}((dx_{j_{0}}y_{j_{0}}+c_{j_{0}})b)\right|\right), \end{multline} puis, en majorant trivialement le troisi\`eme facteur par $ \frac{1}{x_{i_{0}}}q^{2} $, le quatri\`eme par $ x_{i_{0}}q $ et en remarquant que les sommes sur $ b $ valent $ q $ lorsque $ x_{k}a_{k}'\equiv -d^{-1}c_{k} (q) $ et $ 0 $ sinon, on obtient une majoration 

\begin{multline}\label{deuxpreced}
\eqref{preced} \ll q^{n+2}\sum_{d\in(\ZZ/q\ZZ)^{\ast}}\left(\prod_{\substack{k\neq i_{0},j_{0} \\ k \notin I}}\sum_{x_{k}}\frac{\poids(x_{k})}{|x_{k}|}\card\{a_{k}'\in \ZZ/q\ZZ \; | \; x_{k}a_{k}'\equiv -d^{-1}c_{k} (q)\}\right) \\ \times \left(\prod_{k\in I}\sum_{x_{k}}\poids(x_{k})\card\{a_{k}'\in \ZZ/q\ZZ \; | \; x_{k}a_{k}'\equiv -d^{-1}c_{k} (q)\}\right)
\end{multline}
Par les m\^emes calculs que pour le lemme \ref{lemmegeneral}, on trouve que le deuxi\`eme facteur est major\'e par $ x_{i_{0}}^{r}q^{r\epsilon_{1}} $, avec un $ \epsilon_{1}>0 $ arbitrairement petit. Pour le premier facteur on pose $ q=p_{1}^{e_{1}}...p_{s}^{e_{s}} $, et on \'ecrit : \begin{multline*}
\sum_{x_{k}}\frac{\poids(x_{k})}{|x_{k}|}\card\{a_{k}'\in \ZZ/q\ZZ \; | \; x_{k}a_{k}'\equiv -d^{-1}c_{k} (q)\} \\ \ll \sum_{\substack{f_{l}\leqslant e_{l} \\ l \in \{1,...,s\}}}\sum_{\substack{x_{k} \\ p_{1}^{f_{1}}...p_{s}^{f_{s}}|x_{k}}}\frac{\poids(x_{k})}{|x_{k}|}p_{1}^{f_{1}}...p_{s}^{f_{s}} \\ \ll \sum_{\substack{f_{l}\leqslant e_{l} \\ l \in \{1,...,s\}}}\sum_{\substack{h \leqslant \frac{x_{i_{0}}}{p_{1}^{f_{1}}...p_{s}^{f_{s}}} }}\frac{1}{hp_{1}^{f_{1}}...p_{s}^{f_{s}}}p_{1}^{f_{1}}...p_{s}^{f_{s}} \\ \ll \sum_{\substack{f_{l}\leqslant e_{l} \\ l \in \{1,...,s\}}}\sum_{h \leqslant x_{i_{0}} }\frac{1}{h} \ll q^{\epsilon_{2}}\log(x_{i_{0}})
\end{multline*} 
pout $ \epsilon_{2}>0 $ arbitrairement petit. On a ainsi que le premier facteur de \eqref{deuxpreced} est major\'e par $ q^{(n-1-r)\epsilon_{2}}\log(x_{i_{0}})^{n-1-r} $. Par cons\'equent, on obtient la majoration suivante \`a partir de \eqref{deuxpreced} :
\[ \sum_{\substack{(x_{i})_{\neq i_{0}}\\ \mathcal{I}(\xx)=I}}\sum_{(a_{j}')_{j\neq j_{0}} \in (\ZZ/q\ZZ)^{n}} \\ \frac{\poids(\xx)}{\prod_{\substack{k\neq j_{0} \\ k \notin I}}|x_{k}|}|S_{q}(\xx,\aa')(\cc)| \ll q^{n+3+\epsilon}x_{i_{0}}^{r}\log(x_{i_{0}})^{n-1-r}, \] avec $ \epsilon=r\epsilon_{1}+(n-1-r)\epsilon_{2} $ arbitrairement petit.\\

Si l'on suppose que $ i_{0} \in I $, on obtient plus ou moins la m\^eme formule que pour \eqref{preced} : 
 \begin{multline}\label{preced2}\sum_{d\in(\ZZ/q\ZZ)^{\ast}}\left(\prod_{\substack{k\neq j_{0} \\ k \notin I}}\sum_{x_{k}}\frac{\poids(x_{k})}{|x_{k}|}\sum_{a_{k}'\in\ZZ/q\ZZ}\left|\sum_{b\in \ZZ/q\ZZ}e_{q}((dx_{k}a_{k}'+c_{k})b)\right|\right) \\ \times \left(\prod_{\substack{k\in I \\ k\neq i_{0}}}\sum_{x_{k}}\poids(x_{k})\sum_{a_{k}'\in\ZZ/q\ZZ}\left|\sum_{b\in \ZZ/q\ZZ}e_{q}((dx_{k}a_{k}'+c_{k})b)\right|\right) \\ \times \left(\sum_{a_{i_{0}}'\in\ZZ/q\ZZ}\left|\sum_{b\in \ZZ/q\ZZ}e_{q}((dx_{i_{0}}a_{i_{0}}'+c_{i_{0}})b)\right|\right) \\ \times \left(\sum_{x_{j_{0}}}\poids(x_{j_{0}})\left|\sum_{b\in \ZZ/q\ZZ}e_{q}((dx_{j_{0}}y_{j_{0}}+c_{j_{0}})b)\right|\right). \end{multline} 
Par les m\^emes estimations que pr\'ec\'edement, on voit que l'on peut majorer le premier facteur par $ q^{n-r+(n-r)\epsilon_{2}}\log(x_{i_{0}})^{n-r} $, le deuxi\`eme par $ q^{r-1+(r-1)\epsilon_{1}}x_{i_{0}}^{r-1} $, le troisi\`eme par $ q^{2} $ et le quatri\`eme par $ qx_{i_{0}} $, ce qui nous donne donc la m\^eme majoration que dans le cas $ i_{0} \notin I $, \`a un facteur $ \log(x_{i_{0}}) $ suppl\'ementaire pr\`es : \begin{equation}\label{majofin} \sum_{\substack{(x_{i})_{i\neq i_{0}}\\ \mathcal{I}(\xx)=I}}\sum_{(a_{j}')_{j\neq j_{0}} \in (\ZZ/q\ZZ)^{n}  } \\ \frac{\poids(\xx)}{\prod_{\substack{k\neq j_{0} \\ k \notin I}}|x_{k}|}|S_{q}(\xx,\aa')(\cc)| \ll q^{n+3+\epsilon}x_{i_{0}}^{r}\log(x_{i_{0}})^{n-r}. \end{equation}
\begin{rem}
Lorsque l'on est dans le cas $ i_{0}=j_{0} $, on peut voir par le m\^eme type de calculs que l'on trouve l'estimation suivante : \[  \sum_{\substack{(x_{i})_{i\neq i_{0}}\\ \mathcal{I}(\xx)=I}}\sum_{(a_{j}')_{j\neq j_{0}} \in (\ZZ/q\ZZ)^{n} } \\ \frac{\poids(\xx)}{\prod_{\substack{k\neq j_{0} \\ k \notin I}}|x_{k}|}|S_{q}(\xx,\aa')(\cc)|\ll q^{n+2+\epsilon}x_{i_{0}}^{r}\log(x_{i_{0}})^{n-r}. \] La majoration \eqref{majofin} reste donc valable dans le cas $ i_{0}=j_{0} $.
\end{rem}
En utilisant cette derni\`ere majoration \eqref{majofin} dans la formule \eqref{nouvexpr2} on obtient un terme d'erreur : 

\begin{multline*} \eqref{nouvexpr2} \ll B^{n+\frac{1}{2}}\sum_{x_{i_{0}}, y_{j_{0}}}\frac{\poids(x_{i_{0}},y_{j_{0}})}{x_{i_{0}}^{n+1}y_{j_{0}}^{n+1}}\sum_{\substack{\cc\neq \0 \\ |\cc|\ll B^{\frac{1}{6}+\delta}}}\sum_{q\ll Q}q^{1+\epsilon}x_{i_{0}}^{r}\log(x_{i_{0}})^{n-r} \\ \left(\frac{R^{n-r}x_{i_{0}}^{\frac{3(n-r)}{2}+1}y_{j_{0}}^{\frac{3(n-r)}{2}+1}}{q^{\frac{n-r}{2}+1}|\cc|^{\frac{n-r}{2}}B^{\frac{n-r}{2}}}\right)\left(\frac{x_{i_{0}}^{\frac{2r}{3}}q^{\frac{r}{3}}}{R^{\frac{2r}{3}}B^{\frac{r}{6}}|\cc|^{\frac{2r}{3}}}\right)
\end{multline*}
que l'on r\'e\'ecrit, en rappelant que, d'apr\`es la remarque \ref{restriction}, on ne consid\`ere que les $ x_{i_{0}},y_{j_{0}} $ tels que $ x_{i_{0}}\leqslant y_{j_{0}} $ et $ y_{j_{0}}\geqslant \sqrt{q} $ :
\begin{multline*}  B^{\frac{n}{2}+\frac{r}{3}+\frac{1}{2}}R^{n-\frac{5r}{3}}\sum_{x_{i_{0}}, y_{j_{0}}}\poids(x_{i_{0}},y_{j_{0}})\frac{x_{i_{0}}^{r}}{x_{i_{0}}^{r}}\underbrace{\frac{x_{i_{0}}^{\frac{2r}{3}}}{y_{j_{0}}^{\frac{2r}{3}}}}_{\leqslant 1}\frac{x_{i_{0}}^{\frac{3(n-r)}{2}+1}}{x_{i_{0}}^{n-r+1}}\frac{y_{j_{0}}^{\frac{3(n-r)}{2}+1}}{y_{j_{0}}^{n-r+1}}\log(x_{i_{0}})^{n-r} \\ \sum_{q\ll Q}\underbrace{\frac{q^{\epsilon-\frac{n}{2}+\frac{5r}{6}}}{y_{j_{0}}^{\frac{r}{3}}}}_{\ll q^{\epsilon-\frac{n}{2}+\frac{2r}{3}} }\underbrace{\sum_{\substack{\cc\neq \0 \\ |\cc|\ll B^{\frac{1}{6}+\delta}}}\frac{1}{|\cc|^{\frac{n}{2}+\frac{r}{6}}}}_{\substack{\ll (B^{\frac{1}{6}+\delta})^{\frac{n}{2}-\frac{r}{6}+1}\\=B^{\frac{n}{12}-\frac{r}{36}+\frac{1}{6}+\delta'} }}
\end{multline*} (avec $ \delta'=\delta(\frac{n}{2}-\frac{r}{6}+1) $ arbitrairement petit), et on obtient ainsi une majoration finale (en rappelant que $ R=B^{\lambda} $, avec $ \lambda>0 $ qui sera fix\'e ult\'erieurement, et $ x_{i_{0}}y_{j_{0}}\leqslant B^{\frac{2}{3}} $) : 

\begin{multline}\label{termeB}  B^{\frac{7n}{12}+\frac{11r}{36}+\frac{1}{2}+\frac{1}{6}+(n-\frac{5r}{3})\lambda+\delta'}\log(B)^{n-r}\sum_{x_{i_{0}}, y_{j_{0}}}\poids(x_{i_{0}},y_{j_{0}})x_{i_{0}}^{\frac{n-r}{2}} y_{j_{0}}^{\frac{n-r}{2}}\sum_{q\ll Q}q^{\epsilon-\frac{n}{2}+\frac{2r}{3}} \\  \ll B^{\frac{11n}{12}-\frac{r}{36}+\frac{4}{3}+(n-\frac{5r}{3})\lambda+\delta''}\sum_{q\ll Q}q^{\epsilon-\frac{n}{2}+\frac{2r}{3}}. \end{multline}
(pour un certain $ \delta''>0 $ arbitrairement petit. On note \`a pr\'esent l'exposant $ \lambda $ sous la forme $ \lambda=\frac{\beta}{n} $, avec $ \beta>0 $. Dans le cas o\`u $ \epsilon-\frac{n}{2}+\frac{2r}{3}\leqslant -1 $, on peut majorer la somme sur $ q $ par $ \log(B) $, et on obtient ainsi un terme d'erreur du type \[ B^{\frac{11n}{12}-\frac{r}{36}+\frac{4}{3}+(n-\frac{5r}{3})\lambda+\delta''}\ll B^{\frac{11n}{12}+\frac{4}{3}+n\lambda+\delta''}, \] qui est alors du type $ O(B^{n}) $ pour tout \begin{equation}\label{n1}
n> 16+12n\lambda=16+12\beta,
\end{equation} (et pour $ \delta'' $ et $ \epsilon $ choisis suffisamment petits). Si l'on a $ \epsilon-\frac{n}{2}+\frac{2r}{3}> -1 $, alors on a la majoration : \[ \sum_{q\ll Q}q^{\epsilon-\frac{n}{2}+\frac{2r}{3}}\ll Q^{1+\epsilon-\frac{n}{2}+\frac{2r}{3}} =B^{\frac{1}{2}+\frac{r}{3}-\frac{n}{4}+\frac{\epsilon}{2}}, \] et donc, avec \eqref{termeB} on trouve un terme d'erreur du type $ O(B^{\alpha}\log(B)) $, avec \begin{align*} \alpha & =\frac{2n}{3}+\frac{11r}{36}+2+\frac{\epsilon}{2}+ (n-\frac{5r}{3})\lambda+\delta' \\  & =\frac{2n}{3}+(\frac{11}{36}-\frac{5}{3}\lambda)r+2+\frac{\epsilon}{2}+ n\lambda+\delta'\\ & \leqslant  \frac{2n}{3}+(\frac{11}{36}-\frac{5}{3}\lambda)n+2+\frac{\epsilon}{2}+n\lambda+\delta' \\ & = \frac{35n}{36}+2+\frac{\epsilon}{2}-\frac{2n}{3}\lambda +\delta' \end{align*} 
(en effet, on peut supposer que $ \frac{11}{36}-\frac{5}{3}\lambda>0 $, car dans le cas contraire, on a alors $ \lambda\geqslant \frac{11}{60} $, ce qui vient contredire l'in\'egalit\'e \eqref{n1}).
Le terme d'erreur est donc du type $ O(B^{n}) $ pour \begin{equation}\label{n2}
n > 72-24n\lambda=66-24\beta,
\end{equation}(et pour $ \delta' $ et $ \epsilon $ choisis assez petits). On en d\'eduit, en regroupant les in\'egalit\'es \eqref{n1} et \eqref{n2}, que la partie $ I'(\cc) $ de l'int\'egrale $ I(\cc) $ fournit un terme d'erreur du type $ O(B^{n}) $ pour tout entier $ n> \min\{16+12\beta,72-24\beta\} $, et on remarque que cette borne inf\'erieure pour $ n $ est minimale pour $ \beta=\frac{14}{9} $ et vaut $ \frac{104}{3}\in [34,35] $\label{page2}. \\

On peut donc finalement conclure que d'apr\`es les r\'esultats obtenus dans la section \ref{estimationsimples}, ceux de la page \pageref{page1} et ceux de la page \pageref{page2} on a la proposition suivante : 
\begin{prop}\label{propTE}
Pour tout $ n\geqslant 35 $, on a que :  
\[ \sum_{x_{i_{0}},y_{j_{0}}}\poids(x_{i_{0}},y_{j_{0}})\sum_{\substack{(x_{i})_{i\neq i_{0}} \\ (y_{j})_{j\neq j_{0}} }}\poids(\xx,\yy)\sum_{\cc\neq \0}\sum_{q\ll Q}q^{-n-1}S_{q}(\xx,\yy)(\cc)I_{q}(\xx,\yy)(\cc)=O_{\varepsilon}(B^{n}). \]
 \end{prop}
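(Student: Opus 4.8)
The statement is the synthesis of all the estimates accumulated in Section~\ref{PartieTE}: it asserts that the contribution of the non-zero frequencies $\cc$ to \eqref{termerreur2} is $O_\varepsilon(B^n)$. The plan is to dispose of the large-$|\cc|$ ranges by the crude bound of Lemma~\ref{lemmemajo1}, to restrict the $q$-sum to $q\leqslant x_{i_0}y_{j_0}$ by the elementary estimate \eqref{majosimple}, and then, for the remaining $\cc$ with $|\cc|\ll B^{\frac16+\delta}$, to split the $t$-integral in \eqref{I(c)} at $|t|=|\ww|/3$ and treat the two pieces separately.

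First I would use Lemma~\ref{lemmemajo1} with $N$ large to see that when $\sqrt B^{\,1+\nu}\geqslant x_{i_0}y_{j_0}$ the sum over $|\cc|\geqslant B^{\frac16}$ is $O(B^{n-\frac12}\log(B)^2)$, and when $\sqrt B^{\,1+\nu}\leqslant x_{i_0}y_{j_0}$ both the range $|\cc|\geqslant B$ and the range $|\cc|\gg (x_{i_0}y_{j_0}/\sqrt B)^{1+\delta}$ are negligible; since $x_{i_0}y_{j_0}\leqslant B^{\frac23}$ this reduces everything to $|\cc|\ll B^{\frac16+\delta}$. Likewise, applying Lemma~\ref{lemmemajo1} with $N=n+2$ to the terms $q>x_{i_0}y_{j_0}$ and using the pointwise count $A_k(q)=O(q^{1+\epsilon})$ inside the exponential sums shows, via \eqref{majosimple}, that those $q$ contribute $O(B^{\frac n2+\frac43+\epsilon})=O(B^n)$; so one may assume $q\leqslant x_{i_0}y_{j_0}$.

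Next, writing $I(\cc)=\int_{|t|\leqslant|\ww|/3}+\,I'(\cc)$, the first piece is controlled by Lemma~\ref{lemme10} applied to $\phi(\uu,t)$, whose gradient is $\gg P|\cc|/q=|\ww|$ with vanishing higher derivatives, giving a bound $\ll (Q/q)|\ww|^{-N}$; inserting this into \eqref{termerreur2} and invoking the averaged exponential-sum bounds of Lemmas~\ref{lemmemoi} and~\ref{lemmegeneral}, together with $\sum_{q\ll Q}q^{3+\epsilon}\ll B^{2+\epsilon/2}$ and $x_{i_0}y_{j_0}\leqslant B^{\frac23}$, makes this part $O(B^n)$ for $n\geqslant 21$. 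For $I'(\cc)$ I would introduce the auxiliary parameter $R=B^{\lambda}$ and the smoothing lemma \cite[Lemme~2]{HB1} applied to $\psi(\uu)$ at scale $\delta=|\ww|^{-1/2}$; after the change of variables $\uu=\vv+\delta\ss$ the points $\yy$ split into \emph{bons points}, where $|\nabla f(\0)|\geqslant R$ and Lemma~\ref{lemme10} gives an arbitrarily small contribution, and \emph{mauvais points}. The mauvais points in a fixed residue class modulo $q$ are counted by estimating the volume of $\bigcup_{|t|\geqslant|\ww|/3}E(t)$, which produces the factor $(x_{i_0}y_{j_0}/q)\prod_{k\notin\mathcal I(\xx)}R_k(\xx)/|\ww|$; combining this with the refined exponential-sum bound \eqref{majofin} and the extra saving \eqref{nouvfact} coming from the very definition of $\mathcal I(\xx)$ leads, after setting $\lambda=\beta/n$, to the constraints $n>16+12\beta$ (when the $q$-sum converges) and $n>66-24\beta$ (otherwise). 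These are balanced at $\beta=\frac{14}9$, yielding $n>\frac{104}{3}$, i.e. $n\geqslant 35$; throughout, the case $i_0=j_0$ is covered by the remarks following each lemma.

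The main obstacle is the treatment of $I'(\cc)$: making the bad-point count small enough forces both the smoothing trick at scale $|\ww|^{-1/2}$ and the somewhat delicate volume bound for $\bigcup_t E(t)$, and only the careful optimization of $R=B^{\beta/n}$ against the exponent in \eqref{termeB} lowers the threshold from $n\geqslant 21$ to $n\geqslant 35$. The secondary technical points are the bookkeeping of which of $x_{i_0},y_{j_0}$ exceeds $\sqrt q$ (Remarque~\ref{restriction}) and maintaining uniformity in the indices $i_0,j_0$, including the degenerate case $i_0=j_0$.
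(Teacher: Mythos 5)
Your proposal follows exactly the paper's own argument: reduction to $|\cc|\ll B^{\frac16+\delta}$ via Lemme~\ref{lemmemajo1}, restriction to $q\leqslant x_{i_0}y_{j_0}$, the split of the $t$-integral at $|\ww|/3$ with Lemme~\ref{lemme10} for the small-$t$ range, and the smoothing at scale $|\ww|^{-1/2}$ with the bon/mauvais points dichotomy, the volume bound for $\bigcup_t E(t)$, and the optimization $R=B^{\beta/n}$ with $\beta=\frac{14}{9}$ giving the threshold $\frac{104}{3}$, hence $n\geqslant 35$. This is essentially the same proof as in Section~\ref{PartieTE}, including the same final constants.
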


\section{ Le terme principal}

\subsection{ G\'en\'eralit\'es}\label{generalite}

On suppose d\'esormais que $ n\geqslant 37 $. Rappelons que l'on cherche \`a montrer que la somme \eqref{nouvformule} est du type $ C_{i_{0},j_{0}}B^{n}\log(B)^{2} $ (o\`u $ C_{i_{0},j_{0}} $ est une constante que nous expliciterons) et d'apr\`es la proposition \ref{propTE}, on constate qu'il nous reste \`a traiter la partie $ \cc= \0 $ de cette somme. Nous allons donc chercher \`a montrer que la somme : \begin{equation}\label{formulegeneral}
B^{n}\sum_{x_{i_{0}},y_{j_{0}}}\frac{\poids(x_{i_{0}},y_{j_{0}})}{x_{i_{0}}^{n+1}y_{j_{0}}^{n+1}}\sum_{\substack{(x_{i})_{i\neq i_{0}} \\ (y_{j})_{j\neq j_{0}}, }}\poids(\xx,\yy)\sum_{q\ll Q}q^{-n-1}S_{q}(\xx,\yy)(\0)I(\0)
\end{equation} avec \begin{equation}\label{S(00)} S_{q}(\xx,\yy)(\0)=\sum_{d\in (\ZZ/q\ZZ)^{\ast}}\sum_{\bb \in (\ZZ/q\ZZ)^{n+1}}e_{q}(d\xx.\yy.\bb), \end{equation} et (voir \eqref{oubli})
 \begin{equation}\label{I(00)} I(\0)=\int_{\RR^{n+1}}\poids(\uu)h\left(\frac{q}{Q},\frac{\xx.\yy.\uu}{x_{i_{0}}y_{j_{0}}}\right)d\uu. \end{equation}
fournit un terme principal du type $ C_{i_{0},j_{0}}B^{n}\log(B)^{2}(1+o_{\epsilon \ra 0}(1)) $.\\

Nous allons voir avant tout, par des m\'ethodes analogues \`a celles qui ont \'et\'e d\'evelopp\'ees pour le terme d'erreur, que l'on peut restreindre la somme sur $ q $ aux entiers $ q\leqslant x_{i_{0}}y_{j_{0}} $. Remarquons avant tout que la somme \[ S_{q}(\xx,\yy)(\0)=\sum_{d \in (\ZZ/q\ZZ)^{\ast}}\sum_{\bb\in (\ZZ/q\ZZ)^{n+1}} e_{q}(d\xx.\yy.\bb)\] vaut $ q^{n+1}\varphi(q) $ si $ x_{k}y_{k}\equiv 0 (q) $ pour tout $ k $, et $ 0 $ sinon. Par ailleurs, si $ q>x_{i_{0}}y_{j_{0}} $, $ x_{k}y_{k}\equiv 0 (q) $ \'equivaut \`a $ x_{k}=0 $ ou $ y_{k}=0 $. Or, sur $ \supp\poids(\xx,\yy) $ on a $ x_{k}\neq 0 $ et $ y_{k}\neq 0 $ pour tout $ k $. On peut donc exclure le cas o\`u  $ q>x_{i_{0}}y_{j_{0}} $ (en fait, nous verrons ult\'erieurement que l'on peut se restreindre \`a la somme sur $ q\leqslant \min\{x_{i_{0}},y_{j_{0}}\} $).

On souhaite donc \'evaluer le terme principal \eqref{formulegeneral}. On peut r\'e\'ecrire la formule \eqref{formulegeneral} sous la forme :
 
 \begin{equation}\label{TP4} B^{n}\sum_{x_{i_{0}},y_{j_{0}}}\frac{\poids(x_{i_{0}},y_{j_{0}})}{x_{i_{0}}^{n+1}y_{j_{0}}^{n+1}}\sum_{q\ll Q}\sum_{\substack{(a_{i})_{i\neq i_{0}}\in (\ZZ /q\ZZ)^{n} \\  (a_{j}')_{j\neq j_{0}}\in (\ZZ /q\ZZ)^{n}} }q^{-n-1}S_{q}(\aa,\aa')(\0)I(\aa,\aa')\end{equation}
 
 o\`u l'on a pos\'e \begin{equation}
  I(\aa,\aa')=\sum_{\substack{x_{i}\equiv a_{i}(q), i\neq i_{0} \\ y_{j}\equiv a_{j}'(q), j\neq j_{0}}}\poids(\xx,\yy)\int_{\RR^{n+1}}\poids(\uu)h\left(\frac{q}{Q},\frac{\xx.\yy.\uu}{x_{i_{0}}y_{j_{0}}}\right)d\uu, \end{equation} o\`u $ \aa $ (resp. $ \aa' $) d\'esigne le vecteur de $ (\ZZ/q\ZZ)^{n+1} $ dont les coordonn\'ees sont les $ a_{i} $ pour $ i\neq i_{0} $ (resp. $ a_{j}' $ pour $ j\neq j_{0} $) et $ x_{i_{0}}\mod q $ pour $ i=i_{0} $ (resp. $ y_{j_{0}}\mod q $ pour $ j=j_{0} $). Pour traiter ce terme principal, il est n\'ecessaire de traiter s\'epar\'ement le cas $ i_{0}=j_{0} $ et le cas $ i_{0}\neq j_{0} $.

\subsection{ Le cas o\`u $ i_{0}=j_{0} $}

On suppose dans cette partie que $ i_{0}=j_{0} $. Commen\c{c}ons par remplacer la variable $ u_{i_{0}} $ par $ t=\frac{\xx.\yy.\uu}{x_{i_{0}}y_{i_{0}}}=u_{i_{0}}+\sum_{k\neq i_{0}}\frac{x_{k}y_{k}u_{k}}{x_{i_{0}}y_{i_{0}}} $. On a alors \[ I(\aa,\aa')=\sum_{\substack{x_{i}\equiv a_{i}(q), i\neq i_{0} \\ y_{j}\equiv a_{j}'(q), j\neq i_{0}}}\int_{\RR} \poids(\xx,\yy)J(t)h\left(\frac{q}{Q},t\right)dt \] avec \[ J(t)=\int_{\RR^{n}}\poids(\hat{\uu},t)d\hat{\uu} \] ($ \hat{\uu} $ d\'esignant $ \uu $ priv\'ee de la variable $ u_{i_{0}} $) o\`u \nomenclature{$ \poids(\hat{\uu},t) $}{ nouvelle fonction poids}$ \poids(\hat{\uu},t) $ est la nouvelle fonction poids (cf. \eqref{(8)} pour l'ancienne fonction poids) \begin{multline*} \poids(\hat{\uu},t)=\left(\prod_{k\neq i_{0}}\omeg\left(1-|u_{k}|\right)\omeg\left(|u_{k}|-\frac{1}{P}\right)\right) \\ \omeg\left(1-\left|t-\sum_{k\neq i_{0}}\frac{x_{k}y_{k}u_{k}}{x_{i_{0}}y_{i_{0}}}\right|\right) \omeg\left(\left|t-\sum_{k\neq i_{0}}\frac{x_{k}y_{k}u_{k}}{x_{i_{0}}y_{i_{0}}}\right|-\frac{1}{P}\right) . \end{multline*} On applique ensuite le lemme suivant  (cf. \cite[Lemme 9]{HB1}) 
\begin{lemma}\label{TE0}
On a pour tout entier $ N>0 $ : \[ \int_{\RR}J(t)h\left(\frac{q}{Q},t\right)dt=J(0)+O\left(\left(\frac{q}{Q}\right)^{N}\right). \]
\end{lemma}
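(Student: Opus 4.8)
The plan is to recognise that the stated identity is an instance of \cite[Lemme 9]{HB1}, which asserts precisely that for any \emph{fonction poids} $w$ on $\RR$ one has $\int_{\RR}w(t)\,h(r,t)\,dt=w(0)+O_{N}(r^{N})$, the implied constant depending only on $N$ and on the bounds $A_{m}$ for the derivatives $w^{(m)}$. Hence the whole proof reduces to a single point: the function $t\mapsto J(t)$ is a fonction poids on $\RR$ whose constants $A_{m}$ can be taken independently of $\xx$, $\yy$, $q$, $Q$ and of $B$ (equivalently of $P=B/(x_{i_{0}}y_{i_{0}})$; recall that here $i_{0}=j_{0}$). Granting this, it suffices to apply \cite[Lemme 9]{HB1} with $w=J$ and $r=q/Q$, and the desired estimate with a uniform implied constant follows at once.

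So the first thing I would do is verify that $J$ is a uniform fonction poids. Recall $J(t)=\int_{\RR^{n}}\poids(\hat{\uu},t)\,d\hat{\uu}$, where $\poids(\hat{\uu},t)$ is the product of $\prod_{k\neq i_{0}}\omeg(1-|u_{k}|)\,\omeg(|u_{k}|-\tfrac{1}{P})$ with $\omeg(1-|t-g|)\,\omeg(|t-g|-\tfrac{1}{P})$, where $g=g(\hat{\uu},\xx,\yy)=\sum_{k\neq i_{0}}\tfrac{x_{k}y_{k}}{x_{i_{0}}y_{i_{0}}}u_{k}$. For the support one uses that $\omeg(1-|u_{k}|)$ vanishes unless $|u_{k}|<1$, so $\hat{\uu}$ ranges over a fixed bounded box; since the ratios $x_{k}y_{k}/(x_{i_{0}}y_{i_{0}})$ are bounded by a constant depending only on $\varepsilon$ on $\supp\poids(\xx,\yy)$ (indeed $\leqslant(1+\varepsilon)^{2}$), one gets $|g|\leqslant n(1+\varepsilon)^{2}$, and then $\omeg(1-|t-g|)$ vanishes unless $|t|<1+n(1+\varepsilon)^{2}$, a bound independent of all parameters. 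For smoothness: each factor is the composition of the $C^{\infty}$ function $\omeg$ (or $s\mapsto\omeg(1-s)$) with $|u_{k}|$ or $|t-g|$, and the only possible failure of smoothness is at $u_{k}=0$, resp.\ $t=g$; but there the argument of the relevant $\omeg$ lies in a region where $\omeg$ is locally constant, equal to $0$ for $\omeg(|u_{k}|-\tfrac{1}{P})$ and $\omeg(|t-g|-\tfrac{1}{P})$ since $\underline{\omega}_{\varepsilon}$ is identically $0$ on $]-\infty,0]$, and equal to $1$ for $\omeg(1-|u_{k}|)$ and $\omeg(1-|t-g|)$ since $\underline{\omega}_{\varepsilon}$ is identically $1$ on $[2\varepsilon,+\infty[$; hence $\poids(\hat{\uu},t)$ is $C^{\infty}$ in $(\hat{\uu},t)$. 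For the derivative bounds: all derivatives of $\underline{\omega}_{\varepsilon}$ are bounded by constants depending only on $\varepsilon$, and the only coefficients produced by the chain rule are $\pm1$ and the ratios $x_{k}y_{k}/(x_{i_{0}}y_{i_{0}})$, which are $\leqslant(1+\varepsilon)^{2}$, so all partial derivatives of $\poids(\hat{\uu},t)$ in $\hat{\uu}$ and $t$ are bounded by constants depending only on $\varepsilon$ and the order. Differentiating $J$ in $t$ under the integral sign over the bounded $\hat{\uu}$-box then gives $|J^{(m)}(t)|\leqslant A_{m}$ with $A_{m}=A_{m}(\varepsilon)$ independent of $\xx$, $\yy$, $q$, $Q$, $B$. (If one wishes to avoid discussing the harmless non-smoothness directly, one can instead write $J=\omega_{1}\cdot(J/\omega_{1})$ and apply the quotient lemma \cite[Lemme 3]{HB1}, exactly as was done above for $\psi$, after replacing $\omega_{1}$ by a smooth plateau function equal to $1$ on the $t$-support of $J$.)

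With $J$ a fonction poids on $\RR$ with parameter-independent constants, \cite[Lemme 9]{HB1} applies directly and yields, for every integer $N>0$,
\[ \int_{\RR}J(t)\,h\!\left(\frac{q}{Q},t\right)dt=J(0)+O_{N}\!\left(\left(\frac{q}{Q}\right)^{N}\right), \]
the implied constant depending only on $N$ and the $A_{m}$'s, hence uniform. Alternatively, and self-containedly, one can reprove the needed property of $h$ along the lines used earlier: fixing $\chi\in C^{\infty}_{c}(\RR)$ with $\chi\equiv1$ on the $t$-support of $J$, one has $\int_{\RR}J\,h(\tfrac{q}{Q},\cdot)=\int_{\RR}J\cdot\big(\chi\,h(\tfrac{q}{Q},\cdot)\big)$; the derivative estimate $\frac{d^{N}}{dt^{N}}\big(\chi(t)h(\tfrac{q}{Q},t)\big)\ll(\tfrac{q}{Q})^{-N-1}\min\{1,(\tfrac{q}{Q|t|})^{2}\}$ recalled in the section \ref{estimationsimples} (it holds with $\chi$ in place of $\omega_{1}$ by the same argument) together with the normalisation $\int_{\RR}h(\tfrac{q}{Q},t)\,dt=1+O_{N}((\tfrac{q}{Q})^{N})$ from \cite{HB1} show that the Fourier transform of $\chi\,h(\tfrac{q}{Q},\cdot)$ equals $1+O_{N}((\tfrac{q}{Q})^{N})$ at the origin and is $\ll((\tfrac{q}{Q})|\xi|)^{-N}$ for $\xi\neq0$; pairing with the rapidly decreasing $\widehat{J}$ via Parseval then gives the claim. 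The main obstacle is exactly the uniformity asserted in the previous paragraph, i.e.\ making sure that no implied constant hides a dependence on $q$, $Q$ or on $P=B/(x_{i_{0}}y_{i_{0}})$; once that is secured, the lemma is an immediate consequence of \cite[Lemme 9]{HB1}.
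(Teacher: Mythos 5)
Your proposal is correct and follows exactly the route the paper takes: the paper offers no proof of this lemma beyond the citation ``(cf.\ \cite[Lemme 9]{HB1})'', and you likewise reduce everything to that lemma, adding only the (correct and worthwhile) verification that $J$ is a fonction poids on $\RR$ with support and derivative bounds uniform in $\xx,\yy,q,Q,P$ — which is precisely what makes the citation legitimate here, in contrast to the later Lemme \ref{TE} where the derivatives scale like $(x_{i_{0}}/|x_{j_{0}}|)^{k}$ and a separate argument is needed. No gap.
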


Nous allons \`a pr\'esent montrer que l'on peut restreindre la somme sur $ q $ aux entiers $ q\leqslant\min\{ x_{i_{0}},y_{j_{0}}\} $. Rappelons dans un premier temps que l'on a suppos\'e $ x_{i_{0}}y_{j_{0}}\geqslant q $. Quitte \`a intervertir $ x_{i_{0}} $ et $ y_{j_{0}} $, on peut supposer que $ x_{i_{0}}\leqslant y_{j_{0}} $. On a donc $ y_{j_{0}} \geqslant \sqrt{q} $. Supposons donc que l'on a $ x_{i_{0}}<q $. On remarque que d'apr\`es le lemme \ref{TE0}, on a que \[ \int_{\RR}J(t)h\left(\frac{q}{Q},t\right)dt=O(1), \]
on peut donc donner la majoration suivante du terme principal :
\begin{multline}\label{TP3} \eqref{TP4} \ll B^{n}\sum_{x_{i_{0}},y_{j_{0}}}\frac{\poids(x_{i_{0}},y_{j_{0}})}{x_{i_{0}}^{n+1}y_{j_{0}}^{n+1}}\sum_{\substack{q\leqslant Q \\ x_{i_{0}}<q}}q^{-n-1}\sum_{\substack{(x_{i})_{i\neq i_{0}} \\  (a_{j}')_{j\neq j_{0}}\in (\ZZ /q\ZZ)^{n}} }\poids(\xx)S_{q}(\xx,\aa')(\0) \\ \sum_{\substack{ y_{j}\equiv a_{j}'(q), j\neq j_{0}}}\underline{w}_{\varepsilon}(\yy). \end{multline}

Par ailleurs, on peut majorer la partie $ \sum_{\substack{ y_{j}\equiv a_{j}'(q), j\neq j_{0}}}\underline{w}_{\varepsilon}(\yy) $ par $ \frac{y_{j_{0}}^{n}}{q^{\frac{n}{2}}} $, et en utilisant la majoration \eqref{inegmoi} et la remarque \ref{remlemme}, on trouve : \[ \sum_{\substack{(x_{i})_{i\neq i_{0}} \\  (a_{j}')_{j\neq j_{0}}\in (\ZZ /q\ZZ)^{n}} }\poids(\xx)S_{q}(\xx,\aa')(\0)\ll q^{n+3+\epsilon}x_{i_{0}}^{n}. \]

On obtient donc, en utilisant le fait que $ x_{i_{0}}<q $ : 

\begin{multline}
\eqref{TP4} \ll  B^{n}\sum_{x_{i_{0}},y_{j_{0}}}\frac{\poids(x_{i_{0}},y_{j_{0}})}{x_{i_{0}}y_{j_{0}}}\sum_{\substack{q\leqslant Q \\ x_{i_{0}}<q}}q^{-\frac{n}{2}+2+\epsilon} \\ \ll  B^{n}\sum_{x_{i_{0}},y_{j_{0}}}\frac{\poids(x_{i_{0}},y_{j_{0}})}{x_{i_{0}}y_{j_{0}}}\sum_{\substack{q\leqslant Q \\ x_{i_{0}}<q}}q^{-\frac{n}{2}+2+\epsilon}\frac{q}{x_{i_{0}}} \\ = B^{n}\sum_{x_{i_{0}},y_{j_{0}}}\frac{\poids(x_{i_{0}},y_{j_{0}})}{x_{i_{0}}^{2}y_{j_{0}}}\sum_{\substack{q\leqslant Q \\ x_{i_{0}}<q}}q^{-\frac{n}{2}+3+\epsilon}.
\end{multline}
Or, on a que $ \sum_{\substack{q\leqslant Q \\ x_{i_{0}}<q}}q^{-\frac{n}{2}+3+\epsilon} $ est convergente pour tout $ n\geqslant 9 $ et \linebreak $ B^{n}\sum_{x_{i_{0}}\leqslant y_{j_{0}}}\frac{\poids(x_{i_{0}},y_{j^{0}})}{x_{i_{0}}^{2}y_{j_{0}}}=O(B^{n}\log(B)) $, qui sera donc n\'egligeable par rapport au reste de la somme \eqref{TP4} qui sera du type $ O(B^{n}\log(B)^{2}) $. On peut donc restreindre la somme aux entiers $ q $ tels que $ q\leqslant x_{i_{0}} $ et $ q\leqslant y_{j_{0}} $.

On ne consid\`ere dor\'enavant que des entiers $ q $ tels que $ q\leqslant x_{i_{0}} $ et $ q\leqslant y_{j_{0}} $ et nous allons \`a pr\'esent nous int\'eresser \`a la partie $ O((\frac{q}{Q})^{N}) $ de la d\'ecomposition du lemme \ref{TE0}, et montrer que cette partie fournit un terme n\'egligeable : on choisit $ N=n/2 $, et on obtient alors une contribution : \[ B^{\frac{3n}{4}}\sum_{x_{i_{0}},y_{j_{0}}}\frac{\poids(x_{i_{0}},y_{j_{0}})}{x_{i_{0}}^{n+1}y_{j_{0}}^{n+1}}\sum_{q\leqslant Q}\sum_{\substack{(a_{i})_{i\neq i_{0}}\in (\ZZ /q\ZZ)^{n}\\  (a_{j}')_{j\neq i_{0}}\in (\ZZ /q\ZZ)^{n}} }q^{-\frac{n}{2}-1}S_{q}(\aa,\aa')(\0)\sum_{\substack{x_{i}\equiv a_{i}(q), i\neq i_{0} \\ y_{j}\equiv a_{j}'(q), j\neq j_{0}}}\poids(\xx,\yy). \]
En utilisant les majorations  \[ \sum_{\substack{x_{i}\equiv a_{i}(q), i\neq i_{0} \\ y_{j}\equiv a_{j}'(q), j\neq j_{0}}}\poids(\xx,\yy)\ll \frac{x_{i_{0}}^{n}y_{j_{0}}^{n}}{q^{2n}}, \] (\'etant donn\'e que l'on a suppos\'e $ q\leqslant \min\{x_{i_{0}}y_{j_{0}}\} $) et \[ \sum_{\substack{(a_{i})_{i\neq i_{0}}\in (\ZZ /q\ZZ)^{n}\\  (a_{j}')_{j\neq j_{0}}\in (\ZZ /q\ZZ)^{n}} }S_{q}(\aa,\aa')(\0) \ll q^{3+2n+\epsilon} \] (majoration issue du lemme \ref{lemmemoi}), on obtient un terme d'erreur : \begin{align*} B^{\frac{3n}{4}}\sum_{x_{i_{0}},y_{j_{0}}}\frac{\poids(x_{i_{0}},y_{j_{0}})}{x_{i_{0}}y_{j_{0}}}\sum_{q\leqslant Q}q^{2-\frac{n}{2}+\epsilon} & \ll  B^{\frac{3n}{4}}\sum_{x_{i_{0}},y_{j_{0}}}\frac{\poids(x_{i_{0}},y_{j_{0}})}{x_{i_{0}}y_{j_{0}}} \\ & \ll B^{\frac{3n}{4}}\log(B)^{2}\end{align*} qui est du type $ O(B^{n})$ pour tout $ n\geqslant 7 $.\\

On s'int\'eresse donc \`a pr\'esent au terme principal 

\begin{multline}\label{TP2} B^{n}\sum_{x_{i_{0}},y_{j_{0}}}\frac{\poids(x_{i_{0}},y_{j_{0}})}{x_{i_{0}}^{n+1}y_{j_{0}}^{n+1}}\sum_{\substack{q\leqslant Q \\ x_{i_{0}}<q}}\sum_{\substack{(a_{i})_{i\neq i_{0}}\in (\ZZ /q\ZZ)^{n} \\  (a_{j}')_{j\neq j_{0}}\in (\ZZ /q\ZZ)^{n}} }q^{-n-1}S_{q}(\aa,\aa')(\0) \\ \sum_{\substack{x_{i}\equiv a_{i}(q), i\neq i_{0} \\ y_{j}\equiv a_{j}'(q), j\neq j_{0}}}\underline{w}_{\varepsilon}(\xx,\yy)\int_{\RR^{n}}\poids(0,\hat{\uu})d\hat{\uu}. \end{multline}

On remarque que l'on a :

\begin{align*} \sum_{\substack{x_{i}\equiv a_{i}(q), i\neq i_{0} \\ y_{j}\equiv a_{j}'(q), j\neq j_{0}}}\underline{w}_{\varepsilon}(\xx,\yy) & = \left(\prod_{i\neq i_{0}}\sum_{x_{i}\equiv a_{i}(q)}\poids(x_{i})\right)\left(\prod_{j\neq j_{0}}\sum_{y_{j}\equiv a_{j}'(q)}\poids(y_{j})\right) \\ & = \left(\prod_{i\neq i_{0}}\sum_{z_{i}}\poids(a_{i}+qz_{i})\right)\left(\prod_{j\neq j_{0}}\sum_{z_{j}'}\poids(a_{j}'+qz_{j})\right) \end{align*}
On remarque \'egalement que, par la formule sommatoire d'Euler, la somme $ \sum_{z_{i}}\poids(a_{i}+qz_{i}) $ vaut : \begin{equation}\label{euler}
\int_{\RR}\poids(a_{i}+qz_{i})dz_{i}+\int_{\RR}(z_{i}-\lfloor z_{i} \rfloor) \frac{\partial}{\partial z_{i}}(\poids(a_{i}+qz_{i}))dz_{i}.
\end{equation}   
Par ailleurs, \begin{multline*} \frac{\partial}{\partial z_{i}}(\poids(x_{i_{0}},a_{i}+qz_{i}))=\frac{q}{x_{i_{0}}}\omeg'\left(1-\frac{|a_{i}+qz_{i}|}{x_{i_{0}}}\right)\omeg\left(\frac{|a_{i}+qz_{i}|}{x_{i_{0}}}-\frac{1}{x_{i_{0}}}\right) \\ +\frac{q}{x_{i_{0}}}\omeg\left(1-\frac{|a_{i}+qz_{i}|}{x_{i_{0}}}\right)\omeg'\left(\frac{|a_{i}+qz_{i}|}{x_{i_{0}}}-\frac{1}{x_{i_{0}}}\right)\end{multline*} est non nul et vaut $ O(qx_{i_{0}}^{-1}) $ sur un ensemble de mesure $ O_{\epsilon}(x_{i_{0}}q^{-1}) $. Ainsi, on a \[  \sum_{z_{i}}\poids(a_{i}+qz_{i})=\underbrace{\frac{1}{q}\int_{\RR}\poids(x_{i})dx_{i}}_{=O(q^{-1}x_{i_{0}})}+O(1), \] pour tout $ i\neq i_{0} $.
On a de m\^eme avec les $ y_{j} $ : 

\[  \sum_{z_{j}'}\poids(a_{j}'+qz_{j}')=\underbrace{\frac{1}{q}\int_{\RR}\poids(y_{j})dy_{j}}_{=O(q^{-1}y_{j_{0}})}+O(1), \] pour tout $ j\neq j_{0} $.
Par cons\'equent, on trouve \begin{multline*} \sum_{\substack{x_{i}\equiv a_{i}(q), i\neq i_{0} \\ y_{j}\equiv a_{j}'(q), j\neq j_{0}}}\underline{w}_{\varepsilon}(\xx,\yy) =\frac{1}{q^{2n}}\int_{\RR^{2n}}\poids(\xx,\yy)d\xx'd\yy' \\ +O(q^{-2n+1}x_{i_{0}}^{n-1}y_{j_{0}}^{n-1}\max\{x_{i_{0}},y_{j_{0}}\}). \end{multline*}

On remarque alors que le terme d'erreur issu de cette \'egalit\'e est (en utilisant le lemme \ref{lemmemoi}): \begin{multline*} B^{n}\sum_{x_{i_{0}},y_{j_{0}}}\frac{\poids(x_{i_{0}},y_{j_{0}})\max\{x_{i_{0}},y_{j_{0}}\}}{x_{i_{0}}^{2}y_{j_{0}}^{2}}\sum_{q\leqslant Q}\sum_{\substack{(a_{i})_{i\neq i_{0}}\in (\ZZ /q\ZZ)^{n} \\  (a_{j}')_{j\neq j_{0}}\in \ZZ /q\ZZ} }q^{-3n}S_{q}(\aa,\aa')(\0)\\  \ll B^{n}\sum_{x_{i_{0}},y_{j_{0}}}\frac{\poids(x_{i_{0}},y_{j_{0}})}{x_{i_{0}}^{2}y_{j_{0}}^{2}}\max\{x_{i_{0}},y_{j_{0}}\}\sum_{q\leqslant Q}q^{3-n+\varepsilon} \\ \ll B^{n}\sum_{x_{i_{0}},y_{j_{0}}}\frac{\poids(x_{i_{0}},y_{j_{0}})}{x_{i_{0}}^{2}y_{j_{0}}^{2}}\max\{x_{i_{0}},y_{j_{0}}\}\ll B^{n}\log(B). \end{multline*}

Le terme principal est alors :

 \begin{multline*}B^{n}\sum_{x_{i_{0}},y_{j_{0}}}\frac{\poids(x_{i_{0}},y_{j_{0}})}{x_{i_{0}}^{n+1}y_{j_{0}}^{n+1}}\sum_{q\leqslant Q}\sum_{\substack{(a_{i})_{i \neq i_{0}}\in (\ZZ /q\ZZ)^{n}\\  (a_{j}')_{j \neq j_{0}}\in (\ZZ /q\ZZ)^{n} } }q^{-3n-1}S_{q}(\aa,\aa')(\0) \\ \int_{\RR^{3n}}\underline{w}_{\varepsilon}(\xx,\yy)\poids(0,\hat{\uu})d\xx'd\yy'd\hat{\uu}. \end{multline*}
Que l'on r\'e\'ecrit, apr\`es changement de variables $ \xx=x_{i_{0}}\ss $ et $ \yy=y_{j_{0}}\tt $ :

\begin{multline*}B^{n}\sum_{x_{i_{0}},y_{j_{0}}}\frac{\poids(x_{i_{0}},y_{j_{0}})}{x_{i_{0}}y_{j_{0}}}\sum_{q\leqslant Q}\sum_{\substack{(a_{i})_{i\neq i_{0}}\in (\ZZ /q\ZZ)^{n}\\  (a_{j}')_{j \neq j_{0}}\in (\ZZ /q\ZZ)^{n}, } }q^{-3n-1}S_{q}(\aa,\aa')(\0)\\ \int_{\RR^{3n}}\underline{w}_{\varepsilon}(\ss,\tt)\poids(0,\hat{\uu})d\ss'd\tt'd\hat{\uu}. \end{multline*}

Puis, en notant \[ \tilde{\chi}_{i_{0},i_{0}}(\ss',\tt',\uu)=\left\{\begin{array}{lllll}
1 & \mbox{si} & |\ss'|\in [\frac{1}{x_{i_{0}}},1],|\tt'|\in [\frac{1}{y_{i_{0}}},1],|\hat{\uu}|\in[\frac{1}{P},1] && \\  & \mbox{et} & \left| \sum_{k\neq i_{0}}\frac{x_{k}y_{k}u_{k}}{x_{i_{0}}y_{j_{0}}}\right| \in[\frac{1}{P},1]&& \\ 0 & \mbox{sinon}, & & &
\end{array}\right. \]
on remarque que $ \underline{w}_{\varepsilon}(\ss,\tt)\poids(\uu)\ra  \tilde{\chi}_{i_{0},i_{0}}(\ss',\tt',\hat{\uu}) $ lorsque $\varepsilon\ra 0 $, et qu'ainsi on a   \[
 \int_{\RR^{3n}}\underline{w}_{\varepsilon}(\ss,\tt)\poids(0,\hat{\uu})d\ss'd\tt'd\hat{\uu}\\=\tilde{\sigma}_{i_{0},i_{0}}+O(\varepsilon) \] o\`u 
\[ \tilde{\sigma}_{i_{0},i_{0}}=\int_{\RR^{3n}}\tilde{\chi}_{i_{0},i_{0}}(\ss',\tt',\hat{\uu})d\ss' d\tt' d\hat{\uu}. \]

Nous voudrions \`a pr\'esent remplacer $ \tilde{\chi}_{i_{0},i_{0}} $ par  $ \chi_{i_{0},i_{0}} $, avec : 

\[ \chi_{i_{0},i_{0}}(\ss',\tt',\uu)=\left\{\begin{array}{lllll}
1 & \mbox{si} & |\ss'|,|\tt'|,|\hat{\uu}|\leqslant 1 & \mbox{et} & \left| \sum_{k\neq i_{0}}s_{k}t_{k}u_{k}\right| \leqslant 1\\ 0 &\mbox{sinon}. & & &
\end{array}\right. \]

Or il est clair que l'on a : 

\[ (\chi_{i_{0},i_{0}}-\tilde{\chi}_{i_{0},i_{0}})(\ss',\tt',\uu)=\left\{\begin{array}{lll}
1 & \mbox{si} & \min|s_{i}|< \frac{1}{x_{i_{0}}} \\ & \mbox{ou} & \min|t_{j}|< \frac{1}{y_{i_{0}}}\\ & \mbox{ou} & \min|u_{k}|< \frac{1}{P} \\ & \mbox{ou} & \left| \sum_{k\neq i_{0}}s_{k}t_{k}u_{k}\right|<\frac{1}{P}\\ 0 & \mbox{sinon}. & 
\end{array}\right. \]

Par cons\'equent, si l'on note \[ \sigma_{i_{0},i_{0}}=\int_{\RR^{3n}}\chi_{i_{0},i_{0}}(\ss',\tt',\hat{\uu})d\ss' d\tt' d\hat{\uu}, \] on trouve : \[  \sigma_{i_{0},i_{0}}-\tilde{\sigma}_{i_{0},i_{0}}=\int_{\RR^{3n}}(\chi_{i_{0},i_{0}}-\tilde{\chi}_{i_{0},i_{0}})(\ss',\tt',\uu)d\ss' d\tt' d\hat{\uu}\ll \min\{ \frac{1}{x_{i_{0}}},\frac{1}{y_{i_{0}}},\frac{1}{P}\} \]
et on a ainsi que \begin{multline*} B^{n}\sum_{x_{i_{0}},y_{j_{0}}}\frac{\poids(x_{i_{0}},y_{j_{0}})}{x_{i_{0}}y_{j_{0}}}\sum_{q\leqslant Q}\sum_{\substack{(a_{i})_{i\neq i_{0}}\in (\ZZ /q\ZZ)^{n}\\  (a_{j}')_{ j \neq j_{0}}\in (\ZZ /q\ZZ)^{n}} }q^{-3n-1}S_{q}(\aa,\aa')(\0)(\sigma_{i_{0},i_{0}}-\tilde{\sigma}_{i_{0},i_{0}})\\ =O(B^{n}\log(B)). \end{multline*}
Ainsi, dans le cas o\`u $ i_{0}=j_{0} $, le terme principal sera donn\'e par :
\[ \sigma_{i_{0},i_{0}}B^{n}\sum_{x_{i_{0}},y_{j_{0}}}\frac{\poids(x_{i_{0}},y_{j_{0}})}{x_{i_{0}}y_{j_{0}}}\sum_{q\leqslant Q}\sum_{\substack{(a_{i})_{i\neq i_{0}}\in (\ZZ /q\ZZ)^{n}\\  (a_{j}')_{ j \neq j_{0}}\in (\ZZ /q\ZZ)^{n}} }q^{-3n-1}S_{q}(\aa,\aa')(\0). \]

\subsection{ Le cas o\`u $ i_{0}\neq j_{0} $}

Supposons \`a pr\'esent que $ i_{0}\neq j_{0} $. Nous allons s\'eparer la formule en deux parties, en distinguant deux cas :

On introduit \`a cette fin les fonctions poids :\[\underline{w}_{\varepsilon,1}(\xx,\yy)=\left(\prod_{k\neq i_{0},j_{0}}\poids(x_{k})\poids(y_{k})\right)\poids(y_{i_{0}})\poidsun(y_{i_{0}},x_{j_{0}}), \] o\`u \[ \poidsun(y_{i_{0}},x_{j_{0}})=\poids(x_{j_{0}})\omeg\left( \frac{|x_{j_{0}}|}{x_{i_{0}}}-\frac{|y_{i_{0}}|}{y_{j_{0}}}\right), \] (cette fonction poids est la fonction poids traduit la condition suppl\'ementaire $ |x_{j_{0}}y_{j_{0}}|\geqslant |x_{i_{0}}y_{i_{0}}| $) et 

\[\underline{w}_{\varepsilon,2}(\xx,\yy)=\left(\prod_{k\neq i_{0},j_{0}}\poids(x_{k})\poids(y_{k})\right)\poids(x_{j_{0}})\poidsdeux(x_{j_{0}},y_{i_{0}}), \] o\`u \[ \poidsdeux(y_{i_{0}},x_{j_{0}})=\poids(y_{i_{0}})\left(1-\omeg\left( \frac{|y_{i_{0}}|}{y_{j_{0}}}-\frac{|x_{j_{0}}|}{x_{i_{0}}}\right)\right) \]
(cette fonction permet de prendre en compte  la condition suppl\'ementaire $ |x_{j_{0}}y_{j_{0}}|\leqslant |x_{i_{0}}y_{i_{0}}| $).
On a alors \[ \poids(\xx,\yy)=\underline{w}_{\varepsilon,1}(\xx,\yy)+\underline{w}_{\varepsilon,2}(\xx,\yy),\]
et on peut donc d\'ecomposer l'int\'egrale : \[ I(\aa,\aa')=I_{1}(\aa,\aa')+I_{2}(\aa,\aa'), \]
o\`u \[I_{l}(\aa,\aa')=\sum_{\substack{x_{i}\equiv a_{i}(q), i\neq i_{0} \\ y_{j}\equiv a_{j}'(q), j\neq j_{0}}}\underline{w}_{\varepsilon,l}(\xx,\yy)\int_{\RR^{n+1}}\poids(\uu)h\left(\frac{q}{Q},\frac{\xx.\yy.\uu}{x_{i_{0}}y_{j_{0}}}\right)d\uu .\]

Nous allons traiter la partie de la somme \eqref{TP4} correspondant \`a $ I_{1}(\aa,\aa') $ (la partie correspondant \`a $ I_{2}(\aa,\aa') $ se traitant de mani\`ere analogue, par sym\'etrie). On remplace alors la variable $ u_{j_{0}} $ de l'int\'egrale $ I_{1}(\aa,\aa') $ par $ t=\frac{\xx.\yy.\uu}{x_{i_{0}}y_{j_{0}}}=\frac{x_{j_{0}}}{x_{i_{0}}}u_{j_{0}}+\sum_{k\neq j_{0}}\frac{x_{k}y_{k}u_{k}}{x_{i_{0}}y_{j_{0}}} $. On obtient alors :

\[ I_{1}(\aa,\aa')=x_{i_{0}}\sum_{\substack{x_{i}\equiv a_{i}(q), i\neq i_{0} \\ y_{j}\equiv a_{j}'(q), j\neq j_{0}}}\int_{\RR} \frac{\underline{w}_{\varepsilon,1}(\xx,\yy)}{|x_{j_{0}}|}J(t)h\left(\frac{q}{Q},t\right)dt \] avec \[ J(t)=\int_{\RR^{n}}\poids(\hat{\uu},t)d\hat{\uu} \] ($ \hat{\uu} $ d\'esignant $ \uu $ priv\'ee de la variable $ u_{j_{0}} $) o\`u $ \poids(\hat{\uu},t) $ est la nouvelle fonction poids \begin{multline*} \poids(\hat{\uu},t)=\left(\prod_{k\neq j_{0}}\omeg\left(1-|u_{k}|\right)\omeg\left(|u_{k}|-\frac{1}{P}\right)\right) \\ \omeg\left(1-\frac{x_{i_{0}}}{|x_{j_{0}}|}\left|t-\sum_{k\neq j_{0}}\frac{x_{k}y_{k}u_{k}}{x_{i_{0}}y_{j_{0}}}\right|\right) \omeg\left(\frac{x_{i_{0}}}{|x_{j_{0}}|}\left|t-\sum_{k\neq j_{0}}\frac{x_{k}y_{k}u_{k}}{x_{i_{0}}y_{j_{0}}}\right|-\frac{1}{P}\right) . \end{multline*} 

Nous allons tout d'abord remarquer que l'int\'egrale \[ \int_{\RR} J(t)h\left(\frac{q}{Q},t\right)dt \] est du type $ O(1) $. En effet, on remarque que $ |J(t)|\ll 1 $ et a un support de type $ O(1) $, on en d\'eduit que \[  \int_{\RR} J(t)h\left(\frac{q}{Q},t\right)dt \ll \int_{|t|\ll 1}\left|h\left(\frac{q}{Q},t\right)\right|dt.\] On a par ailleurs pour tout $ |t|\geqslant \frac{q}{Q} $, d'apr\`es \cite[Lemme 5]{HB1}: \[ h\left(\frac{q}{Q},t\right) \ll \left(\frac{q}{Q}\right)^{-1}\left(\left(\frac{q}{Q}\right)^{2}\left(1+\frac{1}{|t|^{2}}\right)\right)   \] et on en d\'eduit que \[ \int_{\frac{q}{Q}\leqslant |t|\ll 1}\left|h\left(\frac{q}{Q},t\right)\right|dt \ll 1. \] Par ailleurs, si $ |t|\leqslant \frac{q}{Q} $, on majore trivialement $ h\left(\frac{q}{Q},t\right) $ par $ \frac{Q}{q} $ (toujours \`a l'aide de \cite[Lemme 5]{HB1}), et on a alors \[ \int_{ |t|\leqslant \frac{q}{Q}}\left|h\left(\frac{q}{Q},t\right)\right|dt \ll \frac{Q}{q}\int_{ |t|\leqslant \frac{q}{Q}}dt \ll 1. \] On a donc bien \'etablit que \begin{equation}\label{ouf}
\int_{\RR} J(t)h\left(\frac{q}{Q},t\right)dt \ll 1.
\end{equation} Ceci va nous permettre entre autres de montrer que l'on peut restreindre la somme sur $ q $ aux entiers $ q\leqslant \min\{x_{i_{0}},y_{j_{0}}\}. $ On suppose dans un premier temps que $ q> \min\{x_{i_{0}},y_{j_{0}}\} $, et puisque l'on a suppos\'e que $ x_{i_{0}}y_{j_{0}}\geqslant q $, on a que $ x_{i_{0}}<q, \; y_{j_{0}}\geqslant \sqrt{q}  $ (cas (1)) ou $ y_{j_{0}}<q, \; x_{i_{0}}\geqslant \sqrt{q}  $ (cas (2)). Consid\'erons d'abord le cas (1) : d'apr\`es la majoration \eqref{ouf}, on peut majorer le terme principal \eqref{TP4} par : \begin{multline*}
B^{n}\sum_{x_{i_{0}},y_{j_{0}}}\frac{\poids(x_{i_{0}},y_{j_{0}})}{x_{i_{0}}^{n}y_{j_{0}}^{n+1}}\sum_{\substack{q\leqslant Q \\ x_{i_{0}}<q}}q^{-n-1}\sum_{\substack{(x_{i})_{i\neq i_{0}} \\  (a_{j}')_{j\neq j_{0}}\in (\ZZ /q\ZZ)^{n}} }\frac{\poids(\xx)}{|x_{j_{0}}|}S_{q}(\xx,\aa')(\0) \\ \sum_{\substack{ y_{j}\equiv a_{j}'(q), j\neq j_{0}}}\underline{w}_{\varepsilon}(\yy)
\end{multline*}
En utilisant le fait que $ y_{j_{0}}\geqslant \sqrt{q} $ : \[ \sum_{\substack{ y_{j}\equiv a_{j}'(q), j\neq j_{0}}}\underline{w}_{\varepsilon}(\yy) \ll \frac{y_{j_{0}}^{n}}{q^{\frac{n}{2}}}. \]
En majorant par ailleurs la partie \[ \sum_{\substack{(x_{i})_{i\neq i_{0}} \\  (a_{j}')_{j\neq j_{0}}\in (\ZZ /q\ZZ)^{n}} }\frac{\poids(\xx)}{|x_{j_{0}}|}S_{q}(\xx,\aa')(\0)\ll \sum_{\substack{(x_{i})_{i\neq i_{0}} \\  (a_{j}')_{j\neq j_{0}}\in (\ZZ /q\ZZ)^{n}} }\poids(\xx)S_{q}(\xx,\aa')(\0) \] par $ x_{i_{0}}^{n}q^{3+n+\epsilon} $ \`a l'aide du lemme \ref{lemmegeneral}, on obtient finalement un terme d'erreur :  \begin{multline*}
B^{n}\sum_{x_{i_{0}},y_{j_{0}}}\frac{\poids(x_{i_{0}},y_{j_{0}})}{y_{j_{0}}}\sum_{\substack{q\leqslant Q \\ x_{i_{0}}<q}}q^{2-\frac{n}{2}+\epsilon} \\ \ll B^{n}\sum_{x_{i_{0}},y_{j_{0}}}\frac{\poids(x_{i_{0}},y_{j_{0}})}{y_{j_{0}}}x_{i_{0}}^{3-\frac{n}{2}+\epsilon} \ll B^{n}\log(B),
\end{multline*}
pour tout $ n\geqslant 9 $. 
Pour le cas (2) le terme principal devient \begin{multline*}
B^{n}\sum_{x_{i_{0}},y_{j_{0}}}\frac{\poids(x_{i_{0}},y_{j_{0}})}{x_{i_{0}}^{n}y_{j_{0}}^{n+1}}\sum_{\substack{q\leqslant Q \\ y_{j_{0}}<q}}q^{-n-1}\sum_{\substack{(y_{j})_{j\neq j_{0}} \\  (a_{i})_{i\neq i_{0}}\in (\ZZ /q\ZZ)^{n}} }\poids(\yy)S_{q}(\xx,\aa')(\0) \\ \sum_{\substack{ x_{i}\equiv a_{i}(q), i\neq i_{0}}}\frac{\underline{w}_{\varepsilon}(\xx)}{|x_{j_{0}}|}.
\end{multline*}
Puisque $ |x_{i_{0}}y_{i_{0}}|\leqslant |x_{j_{0}}y_{j_{0}}| $, on a $ \frac{1}{|x_{j_{0}}|}\leqslant \frac{y_{j_{0}}}{x_{i_{0}}}\frac{1}{|y_{i_{0}}|} $ et la formule ci-dessus peut donc \^etre major\'ee par : \begin{multline*}
B^{n}\sum_{x_{i_{0}},y_{j_{0}}}\frac{\poids(x_{i_{0}},y_{j_{0}})}{x_{i_{0}}^{n}y_{j_{0}}^{n+1}}\sum_{\substack{q\leqslant Q \\ y_{j_{0}}<q}}q^{-n-1}\sum_{\substack{(y_{j})_{j\neq j_{0}} \\  (a_{i})_{i\neq i_{0}}\in (\ZZ /q\ZZ)^{n}} }\frac{\poids(\yy)}{|y_{i_{0}}|}S_{q}(\xx,\aa')(\0) \\ \sum_{\substack{ x_{i}\equiv a_{i}(q), i\neq i_{0}}}\underline{w}_{\varepsilon}(\xx).
\end{multline*}
On effectue alors les majorations : \[  \sum_{\substack{ x_{i}\equiv a_{i}(q), i\neq i_{0}}}\underline{w}_{\varepsilon}(\xx) \ll  \frac{x_{i_{0}}^{n}}{q^{\frac{n}{2}}}, \] et (par le lemme \ref{lemmegeneral}) \[                           \sum_{\substack{(y_{j})_{j\neq j_{0}} \\  (a_{i})_{i\neq i_{0}}\in (\ZZ /q\ZZ)^{n}} }\frac{\poids(\yy)}{|y_{i_{0}}|}S_{q}(\xx,\aa')(\0)\ll y_{j_{0}}^{n}q^{3+n+\epsilon}. \]
On trouve alors comme pour le cas (1) la majoration du terme d'erreur : \begin{equation*}B^{n}\sum_{x_{i_{0}},y_{j_{0}}}\frac{\poids(x_{i_{0}},y_{j_{0}})}{x_{i_{0}}}\sum_{\substack{q\leqslant Q \\ y_{j_{0}}<q}}q^{2-\frac{n}{2}+\epsilon}\ll B^{n}\log(B) \end{equation*} pour tout $ n\geqslant 9$.\\

Par cons\'equent la somme sur les entier $ q $ tels que $ q>\min\{ x_{i_{0}},y_{j_{0}}\} $ donne un terme n\'egligeable par rapport \`a $ B^{n}\log(B)^{2} $. On ne consid\'erera donc dor\'enavant que des entiers $ q\leqslant \min\{ x_{i_{0}},y_{j_{0}}\}  $.\\

Nous allons \`a pr\'esent donner une expression plus pr\'ecise de l'int\'egrale $ \int_{\RR}J(t)h\left(\frac{q}{Q},t\right)dt $. On commence par remarquer que l'on peut supposer que  $ \frac{q}{Q} \leqslant \frac{|x_{j_{0}}|}{x_{i_{0}}} $. En effet, dans le cas contraire, on remarque que la somme \eqref{formulegeneral} fournit un terme n\'egligeable par rapport \`a $ B^{n}\log(B)^{2} $ : si l'on restreint la somme \eqref{TP4} aux entiers $ x_{j_{0}} $ tels que $ \frac{q}{Q} \geqslant \frac{|x_{j_{0}}|}{x_{i_{0}}} $, alors en majorant l'int\'egrale $ \int_{\RR}J(t)h\left(\frac{q}{Q},t\right)dt $ par $ 1 $ (cf. \eqref{ouf}) on trouve \[ I(\aa,\aa')\ll \sum_{\substack{x_{i}\equiv a_{i}(q), i\neq i_{0} \\ y_{j}\equiv a_{j}'(q), j\neq j_{0}}}\frac{\underline{w}_{\varepsilon,1}(\xx,\yy)}{|x_{j_{0}}|}, \]et on obtient une partie du terme principal du type : 

\begin{multline*} B^{n}\sum_{x_{i_{0}},y_{j_{0}}}\frac{\poids(x_{i_{0}},y_{j_{0}})}{x_{i_{0}}^{n}y_{j_{0}}^{n+1}}\sum_{q\leqslant Q}\sum_{\substack{(a_{i})_{i\neq i_{0}}\in (\ZZ /q\ZZ)^{n} \\  (a_{j}')_{j \neq j_{0}}\in (\ZZ /q\ZZ)^{n} } }q^{-n-1}S_{q}(\aa,\aa')(\0) \\ \sum_{\substack{x_{i}\equiv a_{i}(q), i\neq i_{0} \\ y_{j}\equiv a_{j}'(q), j\neq j_{0}}}\frac{\underline{w}_{\varepsilon,1}(\xx,\yy)}{|x_{j_{0}}|}. \end{multline*}

En supposant que $ \frac{|x_{j_{0}}|}{x_{i_{0}}}\leqslant \frac{q}{Q} $, (et de m\^eme $ \frac{|y_{i_{0}}|}{y_{j_{0}}}\leqslant \frac{q}{Q} $ puisque $ |x_{i_{0}}y_{i_{0}}|\leqslant |x_{j_{0}}y_{j_{0}}|  $) on a donc $ |x_{j_{0}}|\leqslant \frac{q}{Q}x_{i_{0}} $ (ainsi que $ |y_{i_{0}}|\leqslant \frac{q}{Q}y_{j_{0}} $), ce qui nous permet de donner la majoration : \begin{multline*} \sum_{\substack{x_{i}\equiv a_{i}(q), i\neq i_{0} \\ y_{j}\equiv a_{j}'(q), j\neq j_{0}}}\frac{\underline{w}_{\varepsilon,1}(\xx,\yy)}{|x_{j_{0}}|}\ll \sum_{\substack{x_{i}\equiv a_{i}(q), i\neq i_{0} \\ y_{j}\equiv a_{j}'(q), j\neq j_{0}}}\underline{w}_{\varepsilon,1}(\xx,\yy) \\ \ll \frac{x_{i_{0}}^{n-1}y_{j_{0}}^{n-1}}{q^{2n-2}}\frac{q^{2}}{Q^{2}}x_{i_{0}}y_{j_{0}}=B^{-1}\frac{x_{i_{0}}^{n}y_{j_{0}}^{n}}{q^{2n-4}}, \end{multline*}
et par le lemme \ref{lemmemoi}, \[ \sum_{\substack{(a_{i})_{i\neq i_{0}}\in (\ZZ /q\ZZ)^{n} \\  (a_{j}')_{j \neq j_{0}}\in (\ZZ /q\ZZ)^{n} } }S_{q}(\aa,\aa')(\0) \ll q^{3+2n+\epsilon}, \] et on obtient finalement un terme d'erreur : \[ B^{n-1}\sum_{x_{i_{0}},y_{j_{0}}}\frac{\poids(x_{i_{0}},y_{j_{0}})}{y_{j_{0}}}\sum_{q\leqslant Q}q^{6-n+\epsilon} \ll B^{n-\frac{1}{3}}, \]
(en rappelant que $ x_{i_{0}}y_{j_{0}} \leqslant B^{\frac{2}{3}} $ sur le support de $ \poids(x_{i_{0}},y_{j_{0}}) $) qui est n\'egligeable par rapport \`a $ B^{n}\log(B)^{2} $. On peut donc supposer que l'on a $ \frac{q}{Q} \leqslant \frac{x_{j_{0}}}{x_{i_{0}}} $. \\

Nous allons \`a pr\'esent majorer $ \int_{\RR}J(t)h\left(\frac{q}{Q},t\right)dt $ l'aide du lemme suivant inspir\'e de \cite[Lemme 9]{HB1} : 

\begin{lemma}\label{TE}
On a pour tout entier $ M>0 $, pour $ \frac{q}{Q}<\frac{|x_{j_{0}}|}{x_{i_{0}}} $ : \[ \int_{\RR}J(t)h\left(\frac{q}{Q},t\right)dt=J(0)+O\left(\left(\frac{x_{i_{0}}}{|x_{j_{0}}|}\frac{q}{Q}\right)^{M}\right). \]
\end{lemma}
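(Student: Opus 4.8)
The plan is to adapt the proof of \cite[Lemme~9]{HB1} (the ingredient behind Lemme~\ref{TE0}), carefully bookkeeping the amplification factor $\lambda:=\frac{x_{i_{0}}}{|x_{j_{0}}|}$ that the substitution $t=\frac{x_{j_{0}}}{x_{i_{0}}}u_{j_{0}}+\sum_{k\neq j_{0}}\frac{x_{k}y_{k}u_{k}}{x_{i_{0}}y_{j_{0}}}$ introduces. Write also $r:=\frac{q}{Q}$. Since $x_{j_{0}}$ is a non-maximal coordinate and $|x_{j_{0}}|\geqslant 1$ on $\supp\poids(\xx)$, one has $1\leqslant\lambda\leqslant x_{i_{0}}$, and the hypothesis $\frac{q}{Q}<\frac{|x_{j_{0}}|}{x_{i_{0}}}$ reads exactly $\lambda r<1$; moreover, with the restrictions already in force ($q\leqslant\min\{x_{i_{0}},y_{j_{0}}\}$, $x_{i_{0}}\leqslant B^{1/3}$, $Q=\sqrt B$), one has $r\leqslant B^{-1/6}<1$ and $\lambda\leqslant B^{1/3}$.

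First I would isolate the analytic properties of $J$. Setting $\Phi_{\lambda}(\xi):=\omeg(1-\lambda|\xi|)\,\omeg\!\left(\lambda|\xi|-\tfrac1P\right)$ and $S(\hat\uu):=\sum_{k\neq j_{0}}\frac{x_{k}y_{k}u_{k}}{x_{i_{0}}y_{j_{0}}}$, the new weight function factors as $\poids(\hat\uu,t)=\big(\prod_{k\neq j_{0}}\omeg(1-|u_{k}|)\,\omeg(|u_{k}|-\tfrac1P)\big)\,\Phi_{\lambda}(t-S(\hat\uu))$, and the $\hat\uu$-factor does not involve $t$. Because $\omeg(\lambda|\xi|-\tfrac1P)$ vanishes for $|\xi|\leqslant\frac1{\lambda P}$, the function $\Phi_{\lambda}$ vanishes identically near $0$, hence is $C^{\infty}$ (away from $0$ the absolute value is harmless), is supported in $\frac1{\lambda P}\leqslant|\xi|\leqslant\frac1\lambda\leqslant 1$, and satisfies $\Phi_{\lambda}^{(k)}\ll_{k}\lambda^{k}$ since each differentiation pulls out a factor $\lambda$. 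Integrating over the set where the $\hat\uu$-factor is nonzero (which has measure $O(1)$ and on which $|S(\hat\uu)|\ll 1$) yields: $J$ is $C^{\infty}$, supported in a fixed compact interval, and $\frac{d^{k}}{dt^{k}}J(t)\ll_{k}\lambda^{k}$ for every $k\geqslant 0$. It is exactly this last bound, with $\lambda$ instead of an absolute constant, that prevents a direct application of \cite[Lemme~9]{HB1}.

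Then comes the estimate itself. Fix $M$ and Taylor-expand $J$ about $0$: $J(t)=\sum_{k=0}^{M-1}\frac{J^{(k)}(0)}{k!}t^{k}+R_{M}(t)$ with $|R_{M}(t)|\ll_{M}\lambda^{M}|t|^{M}$, so that $\int_{\RR}J(t)h(r,t)\,dt=\sum_{k=0}^{M-1}\frac{J^{(k)}(0)}{k!}\int_{\RR}t^{k}h(r,t)\,dt+\int_{\RR}R_{M}(t)h(r,t)\,dt$. For each $k$, writing $t^{k}=\chi(t)t^{k}+(1-\chi(t))t^{k}$ with $\chi$ a fixed weight function equal to $1$ near $0$, applying \cite[Lemme~9]{HB1} to the genuine weight function $\chi(t)t^{k}$ (which vanishes at $0$ for $k\geqslant 1$), and bounding the remaining piece by the rapid decay $h(r,t)\ll_{A}r^{-1}\min\{1,(r/|t|)^{A}\}$, valid for $|t|\geqslant r/2$ and every $A$ (\cite[Lemmes~4 et~5]{HB1}), one gets $\int_{\RR}t^{k}h(r,t)\,dt=\delta_{k,0}+O_{k,A}(r^{A})$. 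The same decay bound gives $\int_{\RR}|t|^{M}|h(r,t)|\,dt\ll_{M}r^{M}$, whence $\int_{\RR}R_{M}(t)h(r,t)\,dt\ll_{M}\lambda^{M}r^{M}=(\lambda r)^{M}$. Combining these, and using $|J^{(k)}(0)|\ll_{k}\lambda^{k}\leqslant\lambda^{M-1}$ for $k\leqslant M-1$, one obtains $\int_{\RR}J(t)h(r,t)\,dt=J(0)+O_{M}(\lambda^{M-1}r^{A})+O_{M}((\lambda r)^{M})$; taking $A=M+1$ and using $\lambda\geqslant 1$, $r<1$ one has $\lambda^{M-1}r^{M+1}=(\lambda r)^{M}\cdot\lambda^{-1}r\leqslant(\lambda r)^{M}$, which is precisely the assertion.

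The hard part will be the preliminary analysis of $J$ rather than the bookkeeping above: one must verify that $J$ is honestly $C^{\infty}$ in $t$ despite the $|t-S(\hat\uu)|$ occurring in $\poids(\hat\uu,t)$ — this is why the argument relies on the fact that $\omeg$ annihilates a neighbourhood of its non-smooth locus — and, more substantially, one needs the decay of $h$ in its second variable to be rapid (of every polynomial order), not merely quadratic as recorded in Remarque~\ref{rem2}; it is this genuine rapid decay that makes all higher moments of $h$ negligible and lets the error come out cleanly as $\big(\tfrac{x_{i_{0}}}{|x_{j_{0}}|}\tfrac qQ\big)^{M}$, with no spurious extra power of $\lambda$ surviving. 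Keeping $\lambda\geqslant 1$ (and $r<1$) in view throughout, so that $r^{A}\leqslant 1\leqslant\lambda$, is what allows the final line to close for every $M$.
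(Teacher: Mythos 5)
Your proof is correct, but it follows a genuinely different route from the paper's. The paper introduces the intermediate scale $X=\bigl(\frac{q}{Q}\frac{|x_{j_{0}}|}{x_{i_{0}}}\bigr)^{1/2}$ (the geometric mean of $r=\frac{q}{Q}$ and $1/\lambda$), bounds the tail $|t|\geqslant X$ by the decay of $h$ together with $|J|\ll 1$, and on $[-X,X]$ Taylor-expands $J$ to order $2M$, estimating the moments $\int_{-X}^{X}t^{k}h(\frac{q}{Q},t)\,dt$ via \cite[Lemmes 6 et 8]{HB1}; the choice of $X$ is exactly what balances the two error terms at $(\lambda r)^{M}$. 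You instead Taylor-expand $J$ globally to order $M-1$, reduce the full-line moments $\int_{\RR}t^{k}h(r,t)\,dt=\delta_{k,0}+O(r^{A})$ to \cite[Lemme 9]{HB1} plus the rapid decay of $h$, and absorb the remainder through the moment bound $\int_{\RR}|t|^{M}|h(r,t)|\,dt\ll r^{M}$. Both arguments rest on the same two pillars — the derivative bound $J^{(k)}\ll\lambda^{k}$ (which you justify more carefully than the paper does, by observing that $\omeg(\lambda|\xi|-\frac{1}{P})$ kills a neighbourhood of $\xi=0$ so that the absolute value causes no loss of smoothness) and the all-order decay of $h$ in its second variable. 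Your version avoids the auxiliary scale $X$ and the case split, at the price of needing the full-line moment estimates rather than the truncated ones of \cite[Lemmes 6 et 8]{HB1}; your final bookkeeping ($\lambda^{M-1}r^{M+1}\leqslant(\lambda r)^{M}$ using $\lambda\geqslant 1$, $r<1$) is sound. The only point to make explicit is that the implied constant in your application of \cite[Lemme 9]{HB1} to $\chi(t)t^{k}$ depends only on $k$ and $M$, since $\chi$ is a fixed weight; with that noted, the argument is complete.
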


\begin{proof}
On pose $ X=\left(\frac{q}{Q}\frac{x_{j_{0}}}{x_{i_{0}}}\right)^{\frac{1}{2}}\}\geqslant \frac{q}{Q} $. D'apr\`es \cite[Lemme 5]{HB1} on a \[ h(\frac{q}{Q},t)\ll \left(\frac{q}{Q}\right)^{N-1}+\left(\frac{q}{Q}\right)^{N-1}\frac{1}{|t|^{N}} \ll \left(\frac{q}{Q}\right)^{N-1}\frac{1}{|t|^{N}} , \]
 pour tout $ t $ tel que $ \frac{q}{Q}\leqslant |X|<|t|\ll 1  $. Etant donn\'e que $ |J(t)|\ll 1 $ ($ \hat{\uu} \mt \poids(\hat{\uu},t) $ \'etant born\'e sur un support de mesure $ O(1) $) et de support $ O(1) $, on a que :
 \begin{multline*} \int_{|t|\geqslant X}J(t)h\left(\frac{q}{Q},t\right)dt \ll  \int_{ X\leqslant |t| \ll 1}\left|h\left(\frac{q}{Q},t\right)\right|dt  \\ \ll  \int_{ X\leqslant |t| \ll 1}\left(\frac{q}{Q}\right)^{N-1}\frac{1}{|t|^{N}}dt \\ \ll \left(\left(\frac{q}{Q}\right) X^{-1}\right)^{N-1}  = \left(\frac{x_{i_{0}}}{|x_{j_{0}}|}\frac{q}{Q}\right)^{\frac{N-1}{2}},\end{multline*}
 
et on obtient un $ O\left(\left(\frac{x_{i_{0}}}{|x_{j_{0}}|}\frac{q}{Q}\right)^{M}\right) $ pour cette partie de l'int\'egrale, en prenant $ N=2M+1 $.\\

Pour $ |t|\leqslant X $, on d\'eveloppe $ J(t) $ en s\'erie de Taylor au voisinage de $ 0 $. En remarquant que pour tout $ k>0 $, $ \frac{d^{k}}{dt^{k}}\poids(\hat{\uu},t) \ll \left(\frac{x_{i_{0}}}{|x_{j_{0}}|}\right)^{k} $ , on voit que la d\'ecomposition en s\'erie de Taylor de $ J(t) $ \`a l'ordre $ 2M $ donne un polyn\^ome en $ t $ de degr\'e $ 2M $ et un terme d'erreur $ O\left( \left(\frac{x_{i_{0}}}{|x_{j_{0}}|}X\right)^{2M+1}\right) $. Puisque $ h(\frac{q}{Q},t)\ll \left(\frac{Q}{q}\right) $ ce terme d'erreur donne une contribution \[ O\left(\frac{Q}{q}\left(\frac{x_{i_{0}}}{|x_{j_{0}}|}\right)^{2M+1}X^{2M+2}\right)=O\left(\left(\frac{q}{Q}\frac{x_{i_{0}}}{|x_{j_{0}}|}\right)^{M}\right) \] pour l'int\'egrale consid\'er\'ee. Le terme principal de la d\'ecomposition de Taylor est de la forme : \[ J(0)+\sum_{k=1}^{2M}\alpha_{k}(\hat{\uu})\left(\frac{x_{i_{0}}}{|x_{j_{0}}|}\right)^{k}t^{k} \] o\`u $ \alpha_{k}(\hat{\uu}) $ est une fonction poids \`a valeurs $ O(1) $ sur un support de type $ O(1) $. On remarque, en utilisant \cite[Lemme 8]{HB1}, que pour tout $ k \geqslant 1 $ et pour tout entier $ N>0 $, on a (en rappelant que $ \frac{q}{Q}\leqslant |X|\ll 1 $) : \begin{align*}\left(\frac{x_{i_{0}}}{|x_{j_{0}}|}\right)^{k}\int_{-X}^{X}t^{k}h(\frac{q}{Q},t)dt &  \ll \left(\frac{x_{i_{0}}}{|x_{j_{0}}|}\right)^{k}X^{k}\left(X\left(\frac{q}{Q}\right)^{N-1}+\left(\frac{q}{Q}X^{-1}\right)^{N}\right) \\ & \ll \underbrace{\left(\frac{q}{Q}\frac{x_{i_{0}}}{|x_{j_{0}}|}\right)^{\frac{k}{2}}}_{\leqslant 1}\left(\frac{q}{Q}\frac{x_{i_{0}}}{|x_{j_{0}}|}\right)^{\frac{N}{2}} \\ & \ll \left(\frac{q}{Q}\frac{x_{i_{0}}}{|x_{j_{0}}|}\right)^{M}, \end{align*}
en choisissant $ N=2M $.\\

Enfin, en utilisant \cite[Lemme 6]{HB1}, on remarque que le terme principal de la d\'ecomposition de Taylor nous donne (pour tout $ N>0 $) : \begin{align*}
J(0)\int_{-X}^{X}h(\frac{q}{Q},t)dt & = J(0)\left(1+O\left(X\left(\frac{q}{Q}\right)^{N-1}+\left(\frac{q}{Q}X^{-1}\right)^{N}\right)\right) \\ & =J(0)+O\left(\left(\frac{q}{Q}\frac{x_{i_{0}}}{|x_{j_{0}}|}\right)^{\frac{N}{2}}\right).
\end{align*}
On obtient alors le r\'esultat en prenant $ N=2M $.

\end{proof}
Nous allons \`a pr\'esent montrer que le terme d'erreur issu de ce lemme donne dans \eqref{TP4} un terme n\'egligeable par rapport \`a $ B^{n}\log(B)^{2} $. Le terme d'erreur du lemme \ref{TE} appliqu\'e avec $ M=1 $ vaut alors \[ O\left(\frac{x_{i_{0}}}{|x_{j_{0}}|}\frac{q}{Q}\right) =O\left(\frac{y_{j_{0}}}{|y_{i_{0}}|}\frac{q}{Q}\right) \] en utilisant le fait que $ |x_{i_{0}}y_{i_{0}}|\leqslant |x_{j_{0}}y_{j_{0}}| $. 

On obtient une contribution :   \begin{multline}\label{partie1}
 B^{n-\frac{1}{2}}\sum_{x_{i_{0}},y_{j_{0}}}\frac{\poids(x_{i_{0}},y_{j_{0}})}{x_{i_{0}}^{n}y_{j_{0}}^{n}}\sum_{q\leqslant Q}\sum_{\substack{(a_{i})_{i\neq i_{0}}\in (\ZZ /q\ZZ)^{n} \\  (a_{j}')_{j \neq j_{0}}\in (\ZZ /q\ZZ)^{n} } }q^{-n}S_{q}(\aa,\aa')(\0) \\ \sum_{\substack{x_{i}\equiv a_{i}(q) \\ i\neq i_{0} }}\sum_{ \substack{y_{j}\equiv a_{j}'(q) \\ j\neq j_{0}}}\frac{\poidsun(\xx,\yy)}{|x_{j_{0}}||y_{i_{0}}|}. \end{multline}
On remarque que \begin{align*}
\sum_{\substack{x_{i}\equiv a_{i}(q), i\neq i_{0} \\ y_{j}\equiv a_{j}'(q), j\neq j_{0} }}\frac{\poidsun(\xx,\yy)}{|y_{i_{0}}x_{j_{0}}|} \\  & =\underbrace{\sum_{\substack{x_{j_{0}}\equiv a_{j_{0}}(q)\\y_{i_{0}}\equiv a_{i_{0}}'(q) }}\frac{\poids(y_{i_{0}})\poidsun(x_{j_{0}})}{|x_{j_{0}}y_{i_{0}}|}}_{=O(q^{-2}\log(x_{i_{0}})\log(y_{j_{0}}))} \\ & \underbrace{\prod_{i\neq i_{0},j_{0}}\sum_{x_{i}\equiv a_{i}(q)}\poids(x_{i_{0}},x_{i})}_{=O(x_{i_{0}}^{n-1}q^{-n+1})}\underbrace{\prod_{j\neq i_{0},j_{0}}\sum_{y_{j}\equiv a_{j}'(q)}\poids(y_{j_{0}},y_{j}) }_{=O(y_{j_{0}}^{n-1}q^{-n+1})}.
\end{align*}

Par cons\'equent, \[ \sum_{\substack{x_{i}\equiv a_{i}(q), i\neq i_{0} \\ y_{j}\equiv a_{j}'(q), j\neq j_{0} }}\frac{\poidsun(\xx,\yy)}{|x_{j_{0}}||y_{i_{0}}|}=O(x_{i_{0}}^{n-1}y_{j_{0}}^{n-1}\log(x_{i_{0}})\log(y_{j_{0}})q^{-2n}), \] et en utilisant \`a nouveau le lemme \ref{lemmemoi}, on trouve : \begin{align*} \eqref{partie1} & \ll B^{n-\frac{1}{2}}\sum_{x_{i_{0}},y_{j_{0}}}\poids(x_{i_{0}},y_{j_{0}})\frac{\log(x_{i_{0}})\log(y_{j_{0}})}{x_{i_{0}}y_{j_{0}}}\sum_{q\leqslant Q}q^{3-n+\epsilon} \\ & \ll B^{n-\frac{1}{2}}\sum_{x_{i_{0}},y_{j_{0}}}\poids(x_{i_{0}},y_{j_{0}})\frac{\log(x_{i_{0}})\log(y_{j_{0}})}{x_{i_{0}}y_{j_{0}}} \ll B^{n-\frac{1}{2}}\log(B)^{4}. 
\end{align*}\\
On \'etudie \`a pr\'esent le terme principal :

\begin{multline}\label{numero} B^{n}\sum_{x_{i_{0}},y_{j_{0}}}\frac{\poids(x_{i_{0}},y_{j_{0}})}{x_{i_{0}}^{n}y_{j_{0}}^{n+1}}\sum_{q\leqslant Q}\sum_{\substack{(a_{i})_{i\neq i_{0}}\in (\ZZ /q\ZZ)^{n} \\  (a_{j}')_{j \neq j_{0}}\in (\ZZ /q\ZZ)^{n} } }q^{-n-1}S_{q}(\aa,\aa')(\0) \\ \sum_{\substack{x_{i}\equiv a_{i}(q), i\neq i_{0} \\ y_{j}\equiv a_{j}'(q), j\neq j_{0}}}\frac{\underline{w}_{\varepsilon,1}(\xx,\yy)}{|x_{j_{0}}|}\int_{\RR^{n}}\poids(0,\hat{\uu})d\hat{\uu}.\end{multline}

Dans tout ce qui va suivre nous allons supposer, pour simplifier, que les entiers $ x_{j_{0}} $ et $ y_{i_{0}} $ sont tous deux strictement positifs. Nous voudrions pouvoir r\'e\'ecrire la somme \[ \sum_{\substack{x_{i}\equiv a_{i}(q), i\neq i_{0} \\ y_{j}\equiv a_{j}'(q), j\neq j_{0}}}\frac{\underline{w}_{\varepsilon,1}(\xx,\yy)}{x_{j_{0}}} \] sous forme d'une int\'egrale. Remarquons avant tout que : \begin{multline*} \sum_{\substack{x_{i}\equiv a_{i}(q), i\neq i_{0} \\ y_{j}\equiv a_{j}'(q), j\neq j_{0}}}\frac{\underline{w}_{\varepsilon,1}(\xx,\yy)}{x_{j_{0}}}  =\left(\sum_{y_{i_{0}}\equiv a_{i_{0}}'(q)}\poids(y_{i_{0}})\sum_{x_{j_{0}}\equiv a_{j_{0}}(q)}\frac{\poidsun(y_{i_{0}},x_{j_{0}})}{x_{j_{0}}}\right)\\ \left(\prod_{i\neq i_{0},j_{0}}\sum_{x_{i}\equiv a_{i} (q)}\poids(x_{i})\right) \left(\prod_{j\neq i_{0},j_{0}}\sum_{y_{j}\equiv a_{j}' (q)}\poids(y_{j})\right).\end{multline*}
De la m\^eme mani\`ere que pour le cas $ i_{0}=j_{0} $, on \'etablit que pour tout $ i,j\notin \{i_{0},j_{0}\} $ :
\[  \sum_{x_{i}\equiv a_{i} (q)}\poids(x_{i})=\underbrace{\frac{1}{q}\int_{\RR}\poids(x_{i})dx_{i}}_{=O(q^{-1}x_{i_{0}})}+O(1), \] 
\[  \sum_{y_{j}\equiv a_{j}' (q)}\poids(y_{j})=\underbrace{\frac{1}{q}\int_{\RR}\poids(y_{j})dy_{j}}_{=O(q^{-1}y_{j_{0}})}+O(1). \] 
On a donc : \begin{multline*} \left(\prod_{i\neq i_{0},j_{0}}\sum_{x_{i}\equiv a_{i} (q)}\poids(x_{i})\right) \left(\prod_{j\neq i_{0},j_{0}}\sum_{y_{j}\equiv a_{j}' (q)}\poids(y_{j})\right) \\ =\underbrace{\frac{1}{q^{2n-2}}\prod_{k\neq i_{0},j_{0}}\int_{\RR^{2}}\poids(x_{k})\poids(y_{k})dx_{k}dy_{k}}_{=O(q^{-2n+2}x_{i_{0}}^{n-1}y_{j_{0}}^{n-1})}
+O(q^{-2n+3}x_{i_{0}}^{n-2}y_{j_{0}}^{n-2}\max\{x_{i_{0}}, y_{j_{0}} \}). \end{multline*}

La partie $ O(q^{-2n+3}x_{i_{0}}^{n-2}y_{j_{0}}^{n-2}\max\{x_{i_{0}}, y_{j_{0}} \}) $ fournit une contribution n\'egligeable par rapport \`a $ B^{n}\log(B)^{2} $.

En effet, lorsque $ x_{i_{0}}\leqslant y_{j_{0}}  $, on obtient : \begin{multline*}B^{n}\sum_{x_{i_{0}},y_{j_{0}}}\frac{\poids(x_{i_{0}},y_{j_{0}})}{x_{i_{0}}^{2}y_{j_{0}}^{2}}\sum_{q\leqslant Q}\sum_{\substack{(a_{i})_{i\neq i_{0}}\in (\ZZ /q\ZZ)^{n} \\  (a_{j}')_{j \neq j_{0}}\in (\ZZ /q\ZZ)^{n} } }q^{-3n+2}S_{q}(\aa,\aa')(\0) \\ \underbrace{\sum_{y_{i_{0}}\equiv a_{i_{0}}'(q)}\poids(y_{i_{0}})\sum_{x_{j_{0}}\equiv a_{j_{0}}(q)}\frac{\poidsun(y_{i_{0}},x_{j_{0}})}{x_{j_{0}}} }_{=O(q^{-2}y_{j_{0}}\log(x_{i_{0}}))} \\ \ll B^{n}\sum_{x_{i_{0}},y_{j_{0}}}\poids(x_{i_{0}},y_{j_{0}})\frac{\log(x_{i_{0}})}{x_{i_{0}}^{2}y_{j_{0}}}\underbrace{\sum_{q\leqslant Q}\sum_{\substack{(a_{i})_{i\neq i_{0}}\in (\ZZ /q\ZZ)^{n} \\  (a_{j}')_{j \neq j_{0}}\in (\ZZ /q\ZZ)^{n} } }q^{-3n}S_{q}(\aa,\aa')(\0)}_{=O(1) \; \dapres \; \lle \; \lemme \; \ref{lemmemoi}} \\ \ll B^{n}\sum_{x_{i_{0}},y_{j_{0}}}\poids(x_{i_{0}},y_{j_{0}})\frac{\log(x_{i_{0}})}{x_{i_{0}}^{2}y_{j_{0}}} \ll B^{n}\log(B). \end{multline*} 

Lorsque $ x_{i_{0}}\geqslant y_{j_{0}}  $, en utilisant la condition $ x_{i_{0}}y_{i_{0}} \leqslant x_{j_{0}}y_{j_{0}} $, on trouve : 

\begin{multline*}B^{n}\sum_{x_{i_{0}},y_{j_{0}}}\frac{\poids(x_{i_{0}},y_{j_{0}})}{x_{i_{0}}^{2}y_{j_{0}}^{2}}\sum_{q\leqslant Q}\sum_{\substack{(a_{i})_{i\neq i_{0}}\in (\ZZ /q\ZZ)^{n} \\  (a_{j}')_{j \neq j_{0}}\in (\ZZ /q\ZZ)^{n} } }q^{-3n+2}S_{q}(\aa,\aa')(\0) \\ \underbrace{\sum_{y_{i_{0}}\equiv a_{i_{0}}'(q)}\poids(y_{i_{0}})\sum_{x_{j_{0}}\equiv a_{j_{0}}(q)}\frac{\poidsun(y_{i_{0}},x_{j_{0}})}{y_{i_{0}}} }_{=O(q^{-2}x_{i_{0}}\log(y_{j_{0}}))} \\ \ll B^{n}\sum_{x_{i_{0}},y_{j_{0}}}\poids(x_{i_{0}},y_{j_{0}})\frac{\log(y_{j_{0}})}{x_{i_{0}}y_{j_{0}}^{2}} \ll B^{n}\log(B). \end{multline*}

Le terme principal est donc donn\'e par : 

\begin{multline*}B^{n}\sum_{x_{i_{0}},y_{j_{0}}}\frac{\poids(x_{i_{0}},y_{j_{0}})}{x_{i_{0}}^{n}y_{j_{0}}^{n+1}}\sum_{q\leqslant Q}\sum_{\substack{(a_{i})_{i\neq i_{0}}\in (\ZZ /q\ZZ)^{n} \\  (a_{j}')_{j \neq j_{0}}\in (\ZZ /q\ZZ)^{n}  } }q^{-3n+1}S_{q}(\aa,\aa')(\0) \\\sum_{y_{i_{0}}\equiv a_{i_{0}}'(q)}\poids(y_{i_{0}})\sum_{x_{j_{0}}\equiv a_{j_{0}}(q)}\frac{\poidsun(y_{i_{0}},x_{j_{0}})}{x_{j_{0}}} \prod_{k\neq i_{0},j_{0}}\int_{\RR^{2}}\poids(x_{k})\poids(y_{k})dx_{k}dy_{k}\int_{\RR^{n}}\poids(0,\hat{\uu})d\hat{\uu},\end{multline*} avec \begin{equation}\label{grossier}
\prod_{k\neq i_{0},j_{0}}\int_{\RR^{2}}\poids(x_{k})\poids(y_{k})dx_{k}dy_{k}=O(x_{i_{0}}^{n-1}y_{j_{0}}^{n-1}).
\end{equation}
Remarquons \`a pr\'esent qu'\`a $ y_{i_{0}} $ fix\'e, la formule sommatoire d'Euler \eqref{euler} donne : 
 \begin{multline*} \sum_{x_{j_{0}}\equiv a_{j_{0}}}\frac{\poidsun(y_{i_{0}},x_{j_{0}})}{x_{j_{0}}}=\frac{1}{q}\int_{\RR}\frac{\poidsun(y_{i_{0}},x_{j_{0}})}{x_{j_{0}}}dx_{j_{0}} \\ +\int_{\RR}(z_{j_{0}}-\lfloor z_{j_{0}} \rfloor) \frac{\partial}{\partial z_{j_{0}}}\left(\frac{\poidsun(y_{i_{0}},a_{j_{0}}+qz_{j_{0}})}{a_{j_{0}}+qz_{j_{0}}}\right)dz_{j_{0}}.  \end{multline*}
 Or, \begin{multline*} \left|\frac{\partial}{\partial z_{j_{0}}}\left(\frac{\poidsun(y_{i_{0}},a_{j_{0}}+qz_{j_{0}})}{a_{j_{0}}+qz_{j_{0}}}\right)\right|  =
|- \frac{q}{x_{j_{0}}x_{i_{0}}}\omeg'(1-\frac{x_{j_{0}}}{x_{i_{0}}})\omeg(\frac{x_{j_{0}}}{x_{i_{0}}}-\frac{y_{i_{0}}}{y_{j_{0}}})\omeg(\frac{x_{j_{0}}}{x_{i_{0}}}-\frac{1}{x_{i_{0}}}) \\+\frac{q}{x_{j_{0}}x_{i_{0}}}\omeg(1-\frac{x_{j_{0}}}{x_{i_{0}}})\omeg'(\frac{x_{j_{0}}}{x_{i_{0}}}-\frac{y_{i_{0}}}{y_{j_{0}}})\omeg(\frac{x_{j_{0}}}{x_{i_{0}}}-\frac{1}{x_{i_{0}}})\\ + \frac{q}{x_{j_{0}}x_{i_{0}}}\omeg(1-\frac{x_{j_{0}}}{x_{i_{0}}})\omeg(\frac{x_{j_{0}}}{x_{i_{0}}}-\frac{y_{i_{0}}}{y_{j_{0}}})\omeg'(\frac{x_{j_{0}}}{x_{i_{0}}}-\frac{1}{x_{i_{0}}}) +\frac{q}{x_{j_{0}}^{2}}\poidsun(y_{i_{0}},x_{j_{0}})|, 
 \end{multline*} avec $ x_{j_{0}}=a_{j_{0}}+qz_{j_{0}} $. Ceci est donc du type $ O\left(\frac{q}{x_{j_{0}}x_{i_{0}}} +\frac{q}{x_{j_{0}}^{2}}\right) $ et \`a un support inclus dans $ [\frac{1}{q}(\frac{x_{i_{0}}y_{i_{0}}}{y_{j_{0}}}-a_{j_{0}}),\frac{1}{q}(x_{i_{0}}-a_{j_{0}})] $, on a alors, par changement de variables $ a_{j_{0}}+qz_{j_{0}}=x_{j_{0}} $ : 
\[ \left|\int_{\RR}(z_{j_{0}}-\lfloor z_{j_{0}} \rfloor) \frac{\partial}{\partial z_{j_{0}}}\left(\frac{\poidsun(y_{i_{0}},a_{j_{0}}+qz_{j_{0}})}{a_{j_{0}}+qz_{j_{0}}}\right)dz_{j_{0}} \right| \ll \underbrace{\int_{1}^{x_{i_{0}}}\frac{1}{x_{i_{0}}x_{j_{0}}}dx_{j_{0}}}_{(\alpha)} + \underbrace{\int_{1}^{x_{i_{0}}}\frac{1}{x_{j_{0}}^{2}}dx_{j_{0}}}_{(\beta)}. \]

La partie $ (\alpha) $ est du type $ O\left(\frac{\log(x_{i_{0}})}{x_{i_{0}}}\right) $ et donne donc une contribution (en tenant compte de l'estimation triviale \eqref{grossier}) : \begin{multline*} 
B^{n}\sum_{x_{i_{0}},y_{j_{0}}}\poids(x_{i_{0}},y_{j_{0}})\frac{\log(x_{i_{0}})}{x_{i_{0}}^{2}y_{j_{0}}^{2}}\sum_{q\leqslant Q}\sum_{\substack{(a_{i})_{i\neq i_{0}}\in (\ZZ /q\ZZ)^{n} \\  (a_{j}')_{j \neq j_{0}}\in (\ZZ /q\ZZ)^{n}} }q^{-3n+1}S_{q}(\aa,\aa')(\0) \underbrace{\sum_{y_{i_{0}}\equiv a_{i_{0}}'(q)}\poids(y_{i_{0}})}_{=O(q^{-1}y_{j_{0}})} \\ \ll B^{n}\sum_{x_{i_{0}},y_{j_{0}}}\poids(x_{i_{0}},y_{j_{0}})\frac{\log(x_{i_{0}})}{x_{i_{0}}^{2}y_{j_{0}}} \ll B^{n}\log(B). \end{multline*}

Pour traiter la partie $ (\beta) $, nous allons devoir distinguer les cas $ x_{j_{0}}<\sqrt{x_{i_{0}}} $ et $ x_{j_{0}}\geqslant \sqrt{x_{i_{0}}} $. Consid\'erons d'abord le cas o\`u $ x_{j_{0}}<\sqrt{x_{i_{0}}} $ l'in\'egalit\'e $ x_{i_{0}}y_{i_{0}}\leqslant x_{j_{0}}y_{j_{0}} $ nous donne alors $ y_{i_{0}}\leqslant x_{i_{0}}^{-\frac{1}{2}}y_{j_{0}} $. La partie du terme principal correspondant au cas o\`u $ x_{j_{0}}<\sqrt{x_{i_{0}}} $ vaut alors (en majorant $ (\beta) $ par $ 1 $) : \begin{multline*} B^{n}\sum_{x_{i_{0}},y_{j_{0}}}\frac{\poids(x_{i_{0}},y_{j_{0}})}{x_{i_{0}}y_{j_{0}}^{2}}\sum_{q\leqslant Q}\sum_{\substack{(a_{i})_{i\neq i_{0}}\in (\ZZ /q\ZZ)^{n} \\  (a_{j}')_{j \neq j_{0}}\in (\ZZ /q\ZZ)^{n}}}q^{-3n+1} S_{q}(\aa,\aa')(\0) \underbrace{\sum_{\substack{y_{i_{0}}\equiv a_{i_{0}}'(q)\\ y_{i_{0}}\leqslant x_{i_{0}}^{-\frac{1}{2}}y_{j_{0}}  }}\poids(y_{i_{0}})}_{=O(x_{i_{0}}^{-\frac{1}{2}}y_{j_{0}})} \\ \ll B^{n}\sum_{x_{i_{0}},y_{j_{0}}}\frac{\poids(x_{i_{0}},y_{j_{0}})}{x_{i_{0}}^{\frac{3}{2}}y_{j_{0}}} \ll B^{n}\log(B) \end{multline*}

Consid\'erons \`a pr\'esent la partie correspondant \`a $ x_{j_{0}}\geqslant \sqrt{x_{i_{0}}} $. Dans ce cas, le terme $ (\beta) $ vaut $ O(x_{i_{0}}^{-\frac{1}{2}}) $, et on obtient une contribution :

\begin{multline*}B^{n}\sum_{x_{i_{0}},y_{j_{0}}}\poids(x_{i_{0}},y_{j_{0}})\frac{\log(x_{i_{0}})}{x_{i_{0}}^{\frac{3}{2}}y_{j_{0}}^{2}}\sum_{q\leqslant Q}\sum_{\substack{(a_{i})_{i\neq i_{0}}\in (\ZZ /q\ZZ)^{n} \\  (a_{j}')_{j \neq j_{0}}\in (\ZZ /q\ZZ)^{n}} }q^{-3n+1}S_{q}(\aa,\aa')(\0) \underbrace{\sum_{y_{i_{0}}\equiv a_{i_{0}}'(q)}\poids(y_{i_{0}})}_{=O(q^{-1}y_{j_{0})}} \\ \ll B^{n}\sum_{x_{i_{0}},y_{j_{0}}}\frac{\poids(x_{i_{0}},y_{j_{0}})}{x_{i_{0}}^{\frac{3}{2}}y_{j_{0}}} \ll B^{n}\log(B).  \end{multline*}

Le terme principal est ainsi donn\'e par \begin{multline*}B^{n}\sum_{x_{i_{0}},y_{j_{0}}}\frac{\poids(x_{i_{0}},y_{j_{0}})}{x_{i_{0}}^{n}y_{j_{0}}^{n+1}}\sum_{q\leqslant Q}\sum_{\substack{(a_{i})_{i\neq i_{0}}\in (\ZZ /q\ZZ)^{n} \\  (a_{j}')_{j \neq j_{0}}\in (\ZZ /q\ZZ)^{n}} }q^{-3n}S_{q}(\aa,\aa')(\0)  \sum_{y_{i_{0}}\equiv a_{i_{0}}'(q)}\poids(y_{i_{0}})\\ \int_{\RR}\frac{\poidsun(y_{i_{0}},x_{j_{0}})}{x_{j_{0}}}dx_{j_{0}} \prod_{k\neq i_{0},j_{0}}\int_{\RR^{2}}\poids(x_{k})\poids(y_{k})dx_{k}dy_{k}\int_{\RR^{n}}\poids(0,\hat{\uu})d\hat{\uu},\end{multline*}

Il nous reste \`a exprimer la somme : 

\[\sum_{y_{i_{0}}\equiv a_{i_{0}}'(q)}\poids(y_{i_{0}})\int_{\RR}\frac{\poidsun(y_{i_{0}},x_{j_{0}})}{x_{j_{0}}}dx_{j_{0}}=\int_{\RR}\frac{1}{x_{j_{0}}}\sum_{y_{i_{0}}\equiv a_{i_{0}}'(q)}\poids(y_{i_{0}})\poidsun(y_{i_{0}},x_{j_{0}})dx_{j_{0}}\] sous la forme d'une int\'egrale.
Or, toujours par application de la formule d'Euler, et \'etant donn\'e que \[ \poidsun(y_{i_{0}},x_{j_{0}})=\poids(x_{j_{0}})\omeg\left( \frac{x_{j_{0}}}{x_{i_{0}}}-\frac{y_{i_{0}}}{y_{j_{0}}}\right), \] on a \begin{multline*}\sum_{y_{i_{0}}\equiv a_{i_{0}}'(q)}\poids(y_{i_{0}})\poidsun(y_{i_{0}},x_{j_{0}})=\frac{1}{q}\int_{\RR}\poids(y_{i_{0}})\poidsun(y_{i_{0}},x_{j_{0}})dy_{i_{0}} \\ +\poids(x_{j_{0}})\int_{\RR}(z_{i_{0}}-\lfloor z_{i_{0}} \rfloor) \frac{\partial}{\partial z_{i_{0}}}(\poids(y_{i_{0}})\omeg\left( \frac{x_{j_{0}}}{x_{i_{0}}}-\frac{y_{i_{0}}}{y_{j_{0}}}\right))dz_{i_{0}} \end{multline*}
avec $ y_{i_{0}}=a_{i_{0}}'+qz_{i_{0}} $. Or, on v\'erifie de la m\^eme mani\`ere que pr\'ec\'edemment que la d\'eriv\'ee $ \frac{\partial}{\partial z_{i_{0}}}\left(\poids(y_{i_{0}})\omeg\left( \frac{x_{j_{0}}}{x_{i_{0}}}-\frac{y_{i_{0}}}{y_{j_{0}}}\right)\right) $ est du type $ O(\frac{q}{y_{j_{0}}}) $ et a un support inclus dans $ [\frac{1}{q}(1-a_{i_{0}}'),\frac{1}{q}(\frac{x_{j_{0}}y_{j_{0}}}{x_{i_{0}}}-a_{i_{0}}')] $. Donc, par changement de variable $ y_{i_{0}}=a_{i_{0}}'+qz_{i_{0}} $ on trouve : 
\begin{multline*}
\left|\int_{\RR}\frac{\poids(x_{j_{0}})}{x_{j_{0}}}\int_{\RR}(z_{i_{0}}-\lfloor z_{i_{0}} \rfloor) \frac{\partial}{\partial z_{i_{0}}}(\poids(y_{i_{0}})\omeg\left( \frac{x_{j_{0}}}{x_{i_{0}}}-\frac{y_{i_{0}}}{y_{j_{0}}}\right))dz_{i_{0}}dx_{j_{0}}\right| \\  \ll \int_{1}^{x_{i_{0}}}\frac{1}{x_{j_{0}}}\int_{1}^{\frac{x_{j_{0}}y_{j_{0}}}{x_{i_{0}}}}\frac{1}{y_{j_{0}}}dy_{i_{0}}dx_{j_{0}} \ll \frac{1}{y_{j_{0}}}\int_{1}^{x_{i_{0}}}\frac{1}{x_{j_{0}}}\frac{x_{j_{0}}y_{j_{0}}}{x_{i_{0}}}dx_{j_{0}}=O(1)
\end{multline*}
Ceci donne donc une contribution \begin{multline*} B^{n}\sum_{x_{i_{0}},y_{j_{0}}}\frac{\poids(x_{i_{0}},y_{j_{0}})}{x_{i_{0}}y_{j_{0}}^{2}}\sum_{q\leqslant Q}\sum_{\substack{(a_{i})_{i\neq i_{0}}\in (\ZZ /q\ZZ)^{n} \\  (a_{j}')_{j \neq j_{0}}\in (\ZZ /q\ZZ)^{n}} }q^{-3n}S_{q}(\aa,\aa')(\0) \\ \ll B^{n}\sum_{x_{i_{0}},y_{j_{0}}}\frac{\poids(x_{i_{0}},y_{j_{0}})}{x_{i_{0}}y_{j_{0}}^{2}} \ll B^{n}\log(B).\end{multline*}

Donc, finalement le terme principal vaut : 
\begin{multline*}B^{n}\sum_{x_{i_{0}},y_{j_{0}}}\frac{\poids(x_{i_{0}},y_{j_{0}})}{x_{i_{0}}^{n}y_{j_{0}}^{n+1}}\sum_{q\leqslant Q}\sum_{\substack{(a_{i})_{i\neq i_{0}}\in (\ZZ /q\ZZ)^{n} \\  (a_{j}')_{j \neq j_{0}}\in (\ZZ /q\ZZ)^{n}} }q^{-3n-1}S_{q}(\aa,\aa')(\0) \\ \int_{\RR^{3n}}\frac{\underline{w}_{\varepsilon,1}(\xx,\yy)}{|x_{j_{0}}|}\poids(0,\hat{\uu})d\xx'd\yy'd\hat{\uu}.\end{multline*}
o\`u $ \xx' $ (resp. $ \yy' $) d\'esignent les variables $ \xx $ et $ \yy $ priv\'ees de $ x_{i_{0}} $ et $ y_{j_{0} } $. On effectue un changement de variables $ \xx=x_{i_{0}}\ss $ et $ \yy=y_{j_{0}}\tt $ et on obtient ce terme sous la forme :

\begin{multline*}B^{n}\sum_{x_{i_{0}},y_{j_{0}}}\frac{\poids(x_{i_{0}},y_{j_{0}})}{x_{i_{0}}y_{j_{0}}}\sum_{q\leqslant Q}\sum_{\substack{(a_{i})_{i\neq i_{0}}\in (\ZZ /q\ZZ)^{n} \\  (a_{j}')_{j \neq j_{0}}\in (\ZZ /q\ZZ)^{n}} }q^{-3n-1}S_{q}(\aa,\aa')(\0) \\ \int_{\RR^{3n}}\frac{\underline{w}_{\varepsilon,1}(\ss,\tt)}{|s_{j_{0}}|}\poids(0,\hat{\uu})d\ss'd\tt'd\hat{\uu}.\end{multline*}

\begin{rem}
Par sym\'etrie, on obtient exactement les m\^emes r\'esultats avec la partie du terme principal correspondant \`a la fonction poids $ \underline{w}_{\varepsilon,2}(\xx,\yy) $, et le terme principal obtenu est alors 
\begin{multline*}B^{n}\sum_{x_{i_{0}},y_{j_{0}}}\frac{\poids(x_{i_{0}},y_{j_{0}})}{x_{i_{0}}y_{j_{0}}}\sum_{q\leqslant Q}\sum_{\substack{(a_{i})_{i\neq i_{0}}\in (\ZZ /q\ZZ)^{n} \\  (a_{j}')_{j \neq j_{0}}\in (\ZZ /q\ZZ)^{n}} }q^{-3n-1}S_{q}(\aa,\aa')(\0) \\ \int_{\RR^{3n}}\frac{\underline{w}_{\varepsilon,2}(\ss,\tt)}{|t_{i_{0}}|}\poids(0,\hat{\uu})d\ss'd\tt'd\hat{\uu}.\end{multline*}
\end{rem}

On note ensuite \[ \tilde{\chi}_{i_{0},j_{0}}^{(1)}(\ss',\tt',\uu)=\left\{\begin{array}{lllll}
1 & \mbox{si} & |\ss'|\in [\frac{1}{x_{i_{0}}},1],|\tt'|\in [\frac{1}{y_{j_{0}}},1],|\hat{\uu}|\in [\frac{1}{P}, 1] & & \\ & \mbox{et} & \left| \sum_{k\neq j_{0}}\frac{x_{k}y_{k}u_{k}}{x_{j_{0}}y_{j_{0}}}\right|= \left| \sum_{k\neq j_{0}}\frac{s_{k}t_{k}u_{k}}{s_{j_{0}}}\right| \in [\frac{1}{P},1] \; \mbox{et} \;   |t_{i_{0}}|\leqslant |s_{j_{0}}|&   &\\ 0  &\mbox{sinon}&
\end{array}\right. \]
On constate que $ \underline{w}_{\varepsilon}(\ss,\tt)\poids(\uu)\ra  \chi_{i_{0},j_{0}}(\ss',\tt',\hat{\uu}) $ lorsque $\varepsilon\ra 0 $, et qu'ainsi on a finalement  \[
 \int_{\RR^{3n}}\frac{\underline{w}_{\varepsilon,1}(\ss,\tt)}{|s_{j_{0}}|}\poids(0,\hat{\uu})d\ss'd\tt'd\hat{\uu}\\=\sigma_{i_{0},j_{0}}^{(1)}+O(\varepsilon) \] o\`u 
\[ \tilde{\sigma}_{i_{0},j_{0}}^{(1)}=\int_{\RR^{3n}}\frac{\tilde{\chi}_{i_{0},j_{0}}^{(1)}(\ss',\tt',\hat{\uu})}{|s_{j_{0}}|}d\ss' d\tt' d\hat{\uu}. \]

Nous voudrions pouvoir remplacer la constante $ \tilde{\sigma}_{i_{0},j_{0}}^{(1)} $ par \[ \sigma_{i_{0},j_{0}}^{(1)}=\int_{\RR^{3n}}\frac{\chi_{i_{0},j_{0}}^{(1)}(\ss',\tt',\hat{\uu})}{|s_{j_{0}}|}d\ss' d\tt' d\hat{\uu}, \] o\`u \[ \chi_{i_{0},j_{0}}^{(1)}(\ss',\tt',\hat{\uu})=\left\{\begin{array}{lllllll}
1 & \mbox{si} \; |\ss'|,|\tt'|,|\hat{\uu}|\leqslant 1 \;  \mbox{et} \;  \left| \sum_{k\neq j_{0}}\frac{s_{k}t_{k}u_{k}}{s_{j_{0}}}\right| \leqslant 1 \; \mbox{et} \; |t_{i_{0}}|\leqslant |s_{j_{0}}|& & & & &\\ 0  &\mbox{sinon} & &  &
\end{array}\right.  \]

On remarque que 

\[ (\chi_{i_{0},j_{0}}^{(1)}-\tilde{\chi}_{i_{0},j_{0}})^{(1)}(\ss',\tt',\uu)=\left\{\begin{array}{lcrrc}
1 & \mbox{si} & |t_{i_{0}}|\leqslant |s_{j_{0}}| &&\\ & \mbox{et} & (\min|s_{i}|< \frac{1}{x_{i_{0}}} && \\  & \mbox{ou} & \min|t_{j}|< \frac{1}{y_{j_{0}}}&&\\ & \mbox{ou} & \min|u_{k}|< \frac{1}{P}& &\\ & \mbox{ou} & \left| \sum_{k\neq j_{0}}\frac{s_{k}t_{k}u_{k}}{s_{j_{0}}}\right|<\frac{1}{P})&&\\ 0  &\mbox{sinon} &&&
\end{array}\right. \]

On v\'erifie alors que  \[  \sigma_{i_{0},j_{0}}^{(1)}-\tilde{\sigma}_{i_{0},j_{0}}^{(1)}=\int_{\RR^{3n}}\frac{(\chi_{i_{0},j_{0}}^{(1)}-\tilde{\chi}_{i_{0},j_{0}})^{(1)}(\ss',\tt',\uu)}{|s_{j_{0}}|}d\ss' d\tt' d\hat{\uu}\ll \min\{ \frac{1}{x_{i_{0}}},\frac{1}{y_{j_{0}}},\frac{1}{P}\}, \]
et on a donc \begin{multline*} B^{n}\sum_{x_{i_{0}},y_{j_{0}}}\frac{\poids(x_{i_{0}},y_{j_{0}})}{x_{i_{0}}y_{j_{0}}}\sum_{q\leqslant Q}\sum_{\substack{(a_{i})_{i\neq i_{0}}\in (\ZZ /q\ZZ)^{n}\\  (a_{j}')_{ j \neq j_{0}}\in (\ZZ /q\ZZ)^{n}} }q^{-3n-1}S_{q}(\aa,\aa')(\0)(\sigma_{i_{0},j_{0}}^{(1)}-\tilde{\sigma}_{i_{0},j_{0}})^{(1)} \\ =O(B^{n}\log(B)). \end{multline*}

\begin{rem}
De la m\^eme mani\`ere, on a que \[ \int_{\RR^{3n}}\frac{\underline{w}_{\varepsilon,2}(\ss,\tt)}{|t_{i_{0}}|}\poids(0,\hat{\uu})d\ss'd\tt'd\hat{\uu}\\=\sigma_{i_{0},j_{0}}^{(2)}+O(\varepsilon)+O\left(\min\{ \frac{1}{x_{i_{0}}},\frac{1}{y_{j_{0}}},\frac{1}{P}\} \right) \] o\`u 
\[ \sigma_{i_{0},j_{0}}^{(2)}=\int_{\RR^{3n}}\frac{\chi_{i_{0},j_{0}}^{(2)}(\ss',\tt',\hat{\uu})}{|t_{i_{0}}|}d\ss' d\tt' d\hat{\uu}, \] et \[ \chi_{i_{0},j_{0}}^{(2)}(\ss',\tt',\uu)=\left\{\begin{array}{lrrrrrc}
1 & \mbox{si} & |\ss'|,|\tt'|,|\hat{\uu}|\leqslant 1 & \mbox{et} & \left| \sum_{k\neq i_{0}}\frac{s_{k}t_{k}u_{k}}{t_{i_{0}}}\right| \leqslant 1 & \mbox{et} & |t_{i_{0}}|\geqslant |s_{j_{0}}| \\ 0 & &\mbox{sinon} &  &
\end{array}\right. \]

\end{rem}

On note alors \[ \sigma_{i_{0},j_{0}}=\sigma_{i_{0},j_{0}}^{(1)}+\sigma_{i_{0},j_{0}}^{(2)}.\]

Par cons\'equent, on aura un terme principal donn\'e par :
\[ \sigma_{i_{0},j_{0}}B^{n}\sum_{x_{i_{0}},y_{j_{0}}}\frac{\poids(x_{i_{0}},y_{j_{0}})}{x_{i_{0}}y_{j_{0}}}\sum_{q\leqslant Q}\sum_{\substack{(a_{i})_{i\neq i_{0}}\in (\ZZ /q\ZZ)^{n}\\  (a_{j}')_{ j \neq j_{0}}\in (\ZZ /q\ZZ)^{n}} }q^{-3n-1}S_{q}(\aa,\aa')(\0)\left(1+O(\varepsilon)\right). \]

\subsection{D\'emonstration de la conjecture}\label{dernieresection}

Nous avons donc d\'emontr\'e que pour tous $ i_{0},j_{0} $ on a un terme principal du type \begin{multline*}
 \sigma_{i_{0},j_{0}}B^{n}\sum_{x_{i_{0}},y_{j_{0}}}\frac{\poids(x_{i_{0}},y_{j_{0}})}{x_{i_{0}}y_{j_{0}}}\sum_{q\leqslant Q}\sum_{\substack{(a_{i})_{i\neq i_{0}}\in (\ZZ /q\ZZ)^{n} \\  (a_{j}')_{j\neq j_{0}}\in (\ZZ /q\ZZ)^{n}} }q^{-3n-1}S_{q}(\aa,\aa')(\0)\left(1+O(\varepsilon)\right). 
\end{multline*} 
On peut r\'e\'ecrire ce terme sous la forme : 
\begin{equation}\label{finale} \sigma_{i_{0},j_{0}}B^{n}\sum_{q\leqslant Q }\sum_{\substack{\aa\in (\ZZ /q\ZZ)^{n+1} \\  \aa'\in (\ZZ /q\ZZ)^{n+1}} }q^{-3n-1}S_{q}(\aa,\aa')(\0)\sum_{\substack{x_{i_{0}}\equiv a_{i_{0}} (q)\\y_{j_{0}}\equiv a_{j_{0}}(q)}}\frac{\poids(x_{i_{0}},y_{j_{0}})}{x_{i_{0}}y_{j_{0}}}\left(1+O(\varepsilon)\right).  \end{equation}
Nous allons \`a pr\'esent \'evaluer la somme \[\sum_{\substack{x_{i_{0}}\equiv a_{i_{0}} (q)\\y_{j_{0}}\equiv a_{j_{0}}'(q)}}\frac{\poids(x_{i_{0}},y_{j_{0}})}{x_{i_{0}}y_{j_{0}}}=\sum_{x_{i_{0}}\equiv a_{i_{0}} (q)}\frac{\poids(x_{i_{0}})}{x_{i_{0}}}\sum_{y_{j_{0}}\equiv a_{j_{0}}'(q)}\frac{\poids(y_{j_{0}})}{y_{j_{0}}} \]
\`A $ x_{i_{0}} $ fix\'e, pour $ \varepsilon $ assez petit on a \begin{align*}
 \sum_{y_{j_{0}}\equiv a_{j_{0}}'(q)}\frac{\poids(y_{j_{0}})}{y_{j_{0}}} & =\sum_{\substack{y_{j_{0}}\equiv a_{j_{0}}'(q) \\  1\leqslant x_{i_{0}}y_{j_{0}}\leqslant B^{\frac{2}{3}}}}\frac{1}{y_{j_{0}}}+O(1) \\ & =\frac{1}{q}\int_{1}^{\frac{B^{\frac{2}{3}}}{x_{i_{0}}}}\frac{1}{y_{j_{0}}}dy_{j_{0}}+O\left(\int_{1}^{\frac{B^{\frac{2}{3}}}{x_{i_{0}}}}\frac{1}{y_{j_{0}}^{2}}dy_{j_{0}}\right)+O(1) \\ & =\frac{1}{q}(\frac{2}{3}\log(B)-\log(x_{i_{0}}))+O(1) \end{align*}
 Le terme $ O(1) $ fournit dans \eqref{finale} un terme d'erreur de type $ O(B^{n}\log(B)) $. On a par ailleurs que \begin{multline}\label{l2}
\frac{1}{q}\sum_{x_{i_{0}}\equiv a_{i_{0}} (q)}\frac{\poids(x_{i_{0}})}{x_{i_{0}}}(\frac{2}{3}\log(B)-\log(x_{i_{0}})) \\ =\frac{2\log(B)}{3q^{2}}\int_{1}^{B^{\frac{1}{3}}}\frac{1}{x_{i_{0}}}dx_{i_{0}}+O\left(\frac{1}{q}\int_{1}^{B^{\frac{1}{3}}}\frac{1}{x_{i_{0}}^{2}}dx_{i_{0}}\right)+O(1) \\ -\frac{1}{q^{2}}\int_{1}^{B^{\frac{1}{3}}}\frac{\log(x_{i_{0}})}{x_{i_{0}}}dx_{i_{0}}+O\left(\frac{1}{q}\int_{1}^{B^{\frac{1}{3}}}\frac{\log(x_{i_{0}})}{x_{i_{0}}^{2}}dx_{i_{0}}\right)+O(1) \\  = \frac{1}{q^{2}}\left(\frac{2}{9}\log(B)^{2}-\frac{1}{2}(\frac{1}{9}\log(B)^{2})\right)+O(1)=\frac{1}{q^{2}}\left(\frac{1}{6}\log(B)^{2}\right)+O(1)
\end{multline}
(ici encore, le terme $ O(1) $ fournit dans \eqref{finale} un terme d'erreur de type $ O(B^{n}) $).
Il reste donc \`a \'evaluer : \[ \sum_{q\leqslant Q}q^{-3n-3}\sum_{\substack{\aa\in (\ZZ /q\ZZ)^{n+1} \\  \aa'\in (\ZZ /q\ZZ)^{n+1}} }S_{q}(\aa,\aa')(\0). \] Posons \[ S_{q}(\0)=\sum_{\substack{\aa\in (\ZZ /q\ZZ)^{n+1} \\  \aa'\in (\ZZ /q\ZZ)^{n+1}} }S_{q}(\aa,\aa')(\0). \] On remarque par ailleurs que d'apr\`es le lemme \ref{lemmemoi}, $ S_{q}(\0)\ll q^{5+2n+\epsilon} $, donc \[ \sum_{q\geqslant Q}q^{-3n-3}S_{q}(\0)\ll  \sum_{q\geqslant Q}q^{2-n+\epsilon}=Q^{2-n+\epsilon}=B^{1-\frac{n}{2}+\frac{\epsilon}{2}}, \] et la s\'erie \[ \sum_{q\geqslant 1}q^{-3n-3}S_{q}(\0) \] est absolument convergente. Par cons\'equent : \[ \eqref{finale} =\frac{1}{6}\sigma_{i_{0},j_{0}}B^{n}\log(B)^{2}\left(\sum_{q=1}^{\infty}q^{-3n-3}S_{q}(\0)\right)(1+O(\varepsilon). \]

On v\'erifie par ailleurs que $ q\mt S_{q}(\0) $ est multiplicative, et donc  : \begin{align*}
\sum_{q=1}^{\infty}q^{-3n-3}S_{q}(\0)=\prod_{p \; \premier}\sigma_{p}
\end{align*}
o\`u $ \sigma_{p}=\sum_{t=0}^{\infty}p^{-(3n+3)t}S_{p^{t}}(\0). $
On a ainsi d\'emontr\'e la proposition ci-dessous :
\begin{prop}
Pour tous $ i_{0},j_{0} \in \{ 0,...,n\} $ et $ n\geqslant 35 $ on a : \[ \sum_{i_{0},j_{0}} N(\underline{w}_{\varepsilon,B,i_{0},j_{0}},B)=\frac{2}{9}\underbrace{\left(\prod_{p \; \premier}\sigma_{p}\right)\sigma_{\infty}}_{C}B^{n}\log(B)^{2}(1+O(\varepsilon)). \]
o\`u $ \sigma_{\infty}=\sum_{i_{0},j_{0}}\sigma_{i_{0},j_{0}} $
\end{prop}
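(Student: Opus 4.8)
Le plan est de rassembler les résultats obtenus dans les sections précédentes. On part de l'expression \eqref{nouvformule} de $N(\underline{w}_{\varepsilon,B,i_{0},j_{0}},B)$ et l'on scinde la somme sur $\cc\in\ZZ^{n+1}$ selon que $\cc\neq\0$ ou $\cc=\0$. D'après la proposition \ref{propTE}, la contribution des $\cc\neq\0$ est un $O_{\varepsilon}(B^{n})$, donc négligeable devant $B^{n}\log(B)^{2}$ ; c'est précisément de là que vient la condition $n\geqslant 35$, l'analyse du terme principal ci-dessous n'imposant qu'une contrainte bien plus faible sur $n$. Il ne reste donc qu'à traiter la somme \eqref{formulegeneral}, correspondant à $\cc=\0$, et à montrer qu'elle fournit le terme principal.

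Pour ce faire, on rappellerait d'abord que la somme sur $q$ dans \eqref{formulegeneral} peut être restreinte aux entiers $q\leqslant\min\{x_{i_{0}},y_{j_{0}}\}$ : en effet $S_{q}(\xx,\yy)(\0)$ s'annule dès que $q>x_{i_{0}}y_{j_{0}}$ (les coordonnées des $\xx,\yy$ étant non nulles sur le support de $\poids(\xx,\yy)$), et les entiers $\min\{x_{i_{0}},y_{j_{0}}\}<q\leqslant x_{i_{0}}y_{j_{0}}$ se traitent grâce aux lemmes \ref{lemmemoi}, \ref{lemmegeneral} et à la remarque \ref{remlemme}. On distinguerait ensuite $i_{0}=j_{0}$ et $i_{0}\neq j_{0}$. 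Dans le premier cas, après le changement de variable $u_{i_{0}}\mapsto t=\frac{\xx.\yy.\uu}{x_{i_{0}}y_{i_{0}}}$, le lemme \ref{TE0} remplace $\int_{\RR}J(t)h(\frac{q}{Q},t)\,dt$ par $J(0)$ à un $O\bigl((q/Q)^{N}\bigr)$ près, puis la formule sommatoire d'Euler \eqref{euler} transforme les sommes sur les classes de congruence des $x_{i},y_{j}$ en intégrales, et l'on remplace l'indicatrice tronquée $\tilde{\chi}_{i_{0},i_{0}}$ par $\chi_{i_{0},i_{0}}$, d'où la constante $\sigma_{i_{0},i_{0}}$. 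Dans le second cas, on décompose $\poids(\xx,\yy)=\poidsun(\xx,\yy)+\poidsdeux(\xx,\yy)$, on traite la partie associée à $\poidsun$ (l'autre par symétrie), on se ramène à $\frac{q}{Q}\leqslant\frac{|x_{j_{0}}|}{x_{i_{0}}}$, on applique le lemme \ref{TE}, de nouveau \eqref{euler}, et le passage de $\tilde{\chi}_{i_{0},j_{0}}^{(1)}$ à $\chi_{i_{0},j_{0}}^{(1)}$, ce qui donne $\sigma_{i_{0},j_{0}}^{(1)}$ ; l'ajout de la contribution symétrique $\sigma_{i_{0},j_{0}}^{(2)}$ fournit $\sigma_{i_{0},j_{0}}$. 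Dans les deux cas, toutes les contributions des restes (issus des lemmes \ref{TE0}, \ref{TE} et de \eqref{euler}) et des troncatures sont au plus des $O(B^{n}\log B)$, et l'on aboutit à ce que \eqref{formulegeneral} vaut, modulo ces termes négligeables,
\[ \sigma_{i_{0},j_{0}}\,B^{n}\sum_{x_{i_{0}},y_{j_{0}}}\frac{\poids(x_{i_{0}},y_{j_{0}})}{x_{i_{0}}y_{j_{0}}}\sum_{q\leqslant Q}\sum_{\substack{\aa\in(\ZZ/q\ZZ)^{n+1}\\\aa'\in(\ZZ/q\ZZ)^{n+1}}}q^{-3n-1}S_{q}(\aa,\aa')(\0)\bigl(1+O(\varepsilon)\bigr). \]

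Il resterait alors à évaluer cette expression, réécrite sous la forme \eqref{finale}. À $\aa,\aa'$ et $q$ fixés, deux applications de la formule sommatoire d'Euler donnent, comme dans \eqref{l2} (en utilisant $1\leqslant x_{i_{0}}\leqslant B^{1/3}$ et $x_{i_{0}}y_{j_{0}}\leqslant B^{2/3}$ sur le support de $\poids(x_{i_{0}},y_{j_{0}})$), une égalité de la forme $\sum_{x_{i_{0}}\equiv a_{i_{0}}(q),\,y_{j_{0}}\equiv a_{j_{0}}'(q)}\frac{\poids(x_{i_{0}},y_{j_{0}})}{x_{i_{0}}y_{j_{0}}}=\kappa\,q^{-2}\log(B)^{2}+O(\log B)$ pour une constante rationnelle explicite $\kappa$. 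Les restes $O(\log B)$ donnent, après sommation sur $q$ et sur $i_{0},j_{0}$, un $O(B^{n}\log B)$, grâce à la majoration $S_{q}(\0):=\sum_{\aa,\aa'}S_{q}(\aa,\aa')(\0)\ll q^{5+2n+\epsilon}$ issue du lemme \ref{lemmemoi} ; cette même majoration assure la convergence absolue de $\sum_{q\geqslant 1}q^{-3n-3}S_{q}(\0)$. Comme $q\mapsto S_{q}(\0)$ est multiplicative (théorème des restes chinois appliqué aux sommes d'exponentielles), cette série vaut $\prod_{p\,\premier}\sigma_{p}$ avec $\sigma_{p}=\sum_{t\geqslant 0}p^{-(3n+3)t}S_{p^{t}}(\0)$. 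Enfin, en sommant sur $i_{0},j_{0}\in\{0,\dots,n\}$, la constante $\sigma_{i_{0},j_{0}}$ se transforme en $\sigma_{\infty}=\sum_{i_{0},j_{0}}\sigma_{i_{0},j_{0}}$, et le décompte des facteurs rationnels livre la formule annoncée avec $C=\bigl(\prod_{p}\sigma_{p}\bigr)\sigma_{\infty}$.

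La difficulté principale n'est pas cet assemblage final, mais les analyses du terme principal déjà effectuées plus haut, en particulier le cas $i_{0}\neq j_{0}$ : le point délicat est l'établissement du développement précis du lemme \ref{TE}, où l'on scinde l'intégrale sur $t$ au seuil $X=\bigl(\frac{q}{Q}\frac{x_{j_{0}}}{x_{i_{0}}}\bigr)^{1/2}$ et où l'on exploite les estimations de Heath-Brown sur la fonction $h$, ainsi que le contrôle des sommes auxiliaires correspondantes, dans lesquelles surgissent des pertes en $\log x_{i_{0}}$ qu'il faut maintenir strictement en deçà de $B^{n}\log(B)^{2}$. Dans l'assemblage proprement dit, il faut seulement veiller à ce qu'aucun des nombreux termes d'erreur — restes d'Euler--Maclaurin, queues de séries, différences d'indicatrices tronquées — n'atteigne l'ordre $B^{n}\log(B)^{2}$, vérification fastidieuse mais sans obstacle conceptuel.
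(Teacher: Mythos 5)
Votre proposition suit essentiellement la même route que l'article : la proposition \ref{propTE} élimine la partie $\cc\neq\0$, l'analyse du terme principal (restriction à $q\leqslant\min\{x_{i_{0}},y_{j_{0}}\}$, lemmes \ref{TE0} et \ref{TE}, formule d'Euler, passage des indicatrices tronquées $\tilde{\chi}$ aux $\chi$) produit les constantes $\sigma_{i_{0},j_{0}}$, et l'assemblage final via \eqref{finale}, \eqref{l2}, la multiplicativité de $q\mapsto S_{q}(\0)$ et la sommation sur $i_{0},j_{0}$ est exactement celui du texte. Le seul point que vous laissez ouvert est la valeur explicite de la constante rationnelle $\kappa$ : le calcul \eqref{l2} donne $\kappa=\frac{1}{6}$ et non $\frac{2}{9}$ (le texte lui-même utilise $\frac{1}{6}$ pour $\overline{w}_{\varepsilon}$ et pour $N'(B)$ juste après, de sorte que le $\frac{2}{9}$ de l'énoncé semble être une coquille), si bien qu'affirmer que le décompte \emph{livre la formule annoncée} demanderait cette vérification numérique.
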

On a de la m\^eme mani\`ere que \[ \sum_{i_{0},j_{0}} N(\overline{w}_{\varepsilon,B,i_{0},j_{0}},B)=\frac{1}{6}CB^{n}\log(B)^{2}(1+O(\varepsilon)). \]

Par ailleurs, on v\'erifie facilement que le nombre de points compt\'es par $ N(\overline{w}_{\varepsilon,B,i_{0},j_{0}},B) $ tels que $ x_{\max}=x_{i_{0}}=x_{i_{1}} $ (resp. $ y_{\max}=y_{j_{0}}=y_{j_{1}} $) pour $ i_{0}\neq i_{1} $ (resp. $ j_{0}\neq j_{1} $) est du type $ O(B\log(B)) $. En effet, pour le cas $ x_{\max}=x_{i_{0}}=x_{i_{1}} $ si l'on reprend la somme \eqref{numero} en restreignant la somme sur $ (x_{i})_{i\neq i_{0}} $ aux \'el\'ements $ (x_{i})_{i\neq i_{0}}\in \ZZ^{n} $ tels que $ x_{i_{1}}=x_{i_{0}} $ on trouve : \begin{multline*} B^{n}\sum_{x_{i_{0}},y_{j_{0}}}\frac{\poids(x_{i_{0}},y_{j_{0}})}{x_{i_{0}}^{n}y_{j_{0}}^{n+1}}\sum_{q\leqslant Q}\sum_{\substack{(a_{i})_{i\neq i_{0}}\in (\ZZ /q\ZZ)^{n} \\  (a_{j}')_{j \neq j_{0}}\in (\ZZ /q\ZZ)^{n} } }q^{-n-1}S_{q}(\aa,\aa')(\0) \\ \sum_{\substack{x_{i}\equiv a_{i}(q), i\neq i_{0} \\ x_{i_{1}}=x_{i_{0}} \\ y_{j}\equiv a_{j}'(q), j\neq j_{0}}}\frac{\underline{w}_{\varepsilon,1}(\xx,\yy)}{|x_{j_{0}}|}\int_{\RR^{n}}\poids(0,\hat{\uu})d\hat{\uu}.\end{multline*} On a alors que \[ \sum_{\substack{x_{i}\equiv a_{i}(q), i\neq i_{0} \\ x_{i_{1}}=x_{i_{0}} \\ y_{j}\equiv a_{j}'(q), j\neq j_{0}}}\frac{\underline{w}_{\varepsilon,1}(\xx,\yy)}{|x_{j_{0}}|} \ll \left(\frac{x_{i_{0}}}{q}\right)^{n-2}\left(\frac{y_{j_{0}}}{q}\right)^{n}\log(x_{i_{0}}), \] et le terme principal ci-dessus peut alors \^etre major\'e par : \[ B^{n}\sum_{x_{i_{0}},y_{j_{0}}}\frac{\poids(x_{i_{0}},y_{j_{0}})}{x_{i_{0}}^{2}y_{j_{0}}}\log(x_{i_{0}}) \ll B^{n}\log(B) \] (On proc\`ederait de m\^eme pour la fonction poids $ \underline{w}_{\varepsilon,2} $ en rempla\c{c}ant $ \frac{1}{|y_{i_{0}}|} $ par $ \frac{x_{i_{0}}}{y_{j_{0}}}\frac{1}{|x_{j_{0}}|} $, via $ |x_{j_{0}}y_{j_{0}}|\leqslant|x_{i_{0}}y_{i_{0}}| $). Le cas $ y_{\max}=y_{j_{0}}=y_{j_{1}} $ se traite de mani\`ere analogue. On en d\'eduit l'encadrement \eqref{encadrement}, et on a alors \begin{multline*} N'(B)=\card\{ (\xx,\yy,\zz)\in \ZZ_{0}^{n+1}\times \ZZ_{0}^{n+1}\times \ZZ_{0}^{n+1}\; |  \; x_{\max} >0, \; y_{\max}>0,\;  \\ \xx.\yy.\zz=0 ,\;   H(\xx,\yy,\zz)\leqslant B, \; \max_{i}|x_{i}|\max_{j}|y_{j}|\leqslant B^{\frac{2}{3}} \; \max_{i}|x_{i}|\leqslant B^{\frac{1}{3}} \} \\ =\frac{1}{6}CB^{n}\log(B)^{2}(1+o(1)),\end{multline*}.

Nous allons voir \`a pr\'esent que la donn\'ee de ce terme $ N'(B) $ nous permet de d\'eduire la valeur asymptotique du cardinal de $ E(B) $ o\`u
\begin{multline*} E(B)= \card\{(\xx,\yy,\zz)\in \ZZ_{0}^{n+1}\times \ZZ_{0}^{n+1}\times \ZZ_{0}^{n+1} \; | \; x_{\max}>0, \; y_{\max}>0 ,\; z_{\max}>0 \\ \xx.\yy.\zz=0, \;H(\xx,\yy,\zz) \leqslant B\}.  \end{multline*} En effet, on remarque qu'un point de $ E(B) $ appartient \`a l'un (au moins) des ensembles 
\begin{multline*} E_{1}(B)=\card\{ (\xx,\yy,\zz)\in \ZZ_{0}^{n+1}\times \ZZ_{0}^{n+1}\times \ZZ_{0}^{n+1}\; |  \; x_{\max} >0, \; y_{\max}>0,\; z_{\max}>0\\  \xx.\yy.\zz=0 ,\;   H(\xx,\yy,\zz)\leqslant B, \; \max_{i}|x_{i}|\max_{j}|y_{j}|\leqslant B^{\frac{2}{3}},\; \max_{i}|x_{i}|\leqslant B^{\frac{1}{3}} \},\end{multline*}
\begin{multline*} E_{2}(B)=\card\{ (\xx,\yy,\zz)\in \ZZ_{0}^{n+1}\times \ZZ_{0}^{n+1}\times \ZZ_{0}^{n+1}\; |   \; x_{\max} >0, \; y_{\max} >0, \; z_{\max}>0,\;  \\ \xx.\yy.\zz=0 ,\;   H(\xx,\yy,\zz)\leqslant B, \; \max_{j}|y_{j}|\max_{k}|z_{k}|\leqslant B^{\frac{2}{3}}, \; \max_{j}|y_{j}|\leqslant B^{\frac{1}{3}}\},\end{multline*}
\begin{multline*} E_{3}(B)=\card\{ (\xx,\yy,\zz)\in \ZZ_{0}^{n+1}\times \ZZ_{0}^{n+1}\times \ZZ_{0}^{n+1}\; |   \; x_{\max} >0,  \;y_{\max}>0, \;  z_{\max}>0,\;\\ \xx.\yy.\zz=0 ,\;   H(\xx,\yy,\zz)\leqslant B, \; \max_{k}|z_{k}|\max_{i}|x_{i}|\leqslant B^{\frac{2}{3}}, \; \max_{k}|z_{k}|\leqslant B^{\frac{1}{3}}\}\end{multline*}
(chacun de ces ensemble \'etant de m\^eme cardinal $ \frac{1}{2}N'(B) $). Nous allons \'etablir que le nombre de points appartenant \`a deux des ensembles $ E_{l}(B) $ est du type $ O(B^{n}\log(B)) $. Nous allons le faire pour les points $ (\xx,\yy,\zz) $ appartenant \`a $ E_{1,2}(B)=E_{1}(B)\cap E_{2}(B) $. Pour de tels points, notons $ x_{i_{0}}=\max_{i}|x_{i}| $, $ y_{j_{0}}=\max_{j}|y_{j}| $ et $ z_{k_{0}}=\max_{k}|z_{k}| $. \\

Le nombre de points contenus dans cette intersection peut \^etre major\'e par $ \sum_{i_{0},j_{0}}N(\underline{\tilde{w}}_{\varepsilon,B,i_{0},j_{0}},B) $ o\`u \[ \underline{\tilde{w}}_{\varepsilon,B,i_{0},j_{0}}(\xx,\yy,\zz)=\underline{w}_{\varepsilon,B}(x_{i_{0}},y_{j_{0}})\underline{w}_{\varepsilon}(\xx,\yy) \tilde{\poids}(\zz) \] avec $ \tilde{\poids}(\zz)=\poids(\zz) \prod_{k=0}^{n}\underline{\omega}_{\varepsilon}\left(1-\frac{y_{j_{0}}|z_{k}|}{B^{\frac{2}{3}}}\right)$ (qui prend en compte la condition suppl\'ementaire $ \max_{j}|y_{j}|\max_{k}|z_{k}|\leqslant B^{\frac{2}{3}} $). Il est clair que les r\'esultats obtenus dans la partie \ref{PartieTE} restent valables pour cette nouvelle fonction poids. On s'int\'eresse donc au terme principal

\begin{equation*}
\sum_{x_{i_{0}},y_{j_{0}}}\poids(x_{i_{0}},y_{j_{0}})\sum_{\substack{(x_{i})_{i\neq i_{0}} \\ (y_{j})_{j\neq j_{0}} }}\poids(\xx,\yy)\sum_{q=1}^{\infty}q^{-n-1}S_{q}(\xx,\yy)(\0)\tilde{I}_{q}(\xx,\yy)(\0), \end{equation*} o\`u l'on a not\'e :
 \begin{equation*} \tilde{I}_{q}(\xx,\yy)(\0)=\int_{\RR^{n+1}}\frac{\tilde{\poids}(\zz)}{Q^{2}}h\left(\frac{q}{Q},\frac{\xx.\yy.\zz}{Q^{2}}\right)d\zz.       \end{equation*}

On effectue par la suite un changement de variables $ z_{k}=\frac{B^{\frac{2}{3}}}{y_{j_{0}}} $, ce terme principal devient 

\begin{equation*}
B^{\frac{2}{3}n-\frac{1}{3}}\sum_{x_{i_{0}},y_{j_{0}}}\frac{\poids(x_{i_{0}},y_{j_{0}})}{y_{j_{0}}^{n+1}}\sum_{\substack{(x_{i})_{i\neq i_{0}} \\ (y_{j})_{j\neq j_{0}} }}\poids(\xx,\yy)\sum_{q=1}^{\infty}q^{-n-1}S_{q}(\xx,\yy)(\0)\tilde{I}(\0), \end{equation*} avec 

\begin{equation*} \tilde{I}(\0)=\int_{\RR^{n+1}}\tilde{\poids}(\uu)h\left(\frac{q}{Q},\frac{\xx.\yy.\uu}{B^{\frac{1}{3}}y_{j_{0}}}\right)d\zz.       \end{equation*} Puis, en supposant $ |x_{i_{0}}y_{i_{0}}|\leqslant |x_{j_{0}}y_{j_{0}}| $ comme pr\'ec\'edemment, et en rempla\c{c}ant la variable $ u_{j_{0}} $ par $ t=\frac{\xx.\yy.\uu}{B^{\frac{1}{3}}y_{j_{0}}} $ on a \[ \tilde{I}(\0)=\frac{B^{\frac{1}{3}}}{|x_{j_{0}}|}\int_{\RR}J(t)h\left(\frac{q}{Q},t\right)dt \] o\`u 

\[ J(t)=\int_{\RR^{n}}\tilde{\poids}(0,\hat{\uu})d\hat{\uu}. \] Rappelons que l'on a montr\'e (cf. Lemme \ref{TE}) que \[ \int_{\RR}J(t)h\left(\frac{q}{Q},t\right)dt\ll 1, \] le terme principal est donc du type : \begin{multline*} B^{\frac{2}{3}n}\sum_{x_{i_{0}},y_{j_{0}}}\frac{\poids(x_{i_{0}},y_{j_{0}})}{y_{j_{0}}^{n+1}}\sum_{\substack{(x_{i})_{i\neq i_{0}} \\ (y_{j})_{j\neq j_{0}} \\ |y_{i_{0}}|\leqslant \frac{x_{j_{0}}y_{j_{0}}}{x_{i_{0}}} }}\frac{\poids(\xx,\yy)}{|x_{j_{0}}|}\sum_{q=1}^{\infty}q^{-n-1}S_{q}(\xx,\yy)(\0) \\ \ll B^{\frac{2}{3}n}\sum_{x_{i_{0}},y_{j_{0}}}\frac{\poids(x_{i_{0}},y_{j_{0}})}{y_{j_{0}}^{n+1}}x_{i_{0}}^{n-1}y_{j_{0}}^{n} \ll B^{\frac{2}{3}n}\sum_{x_{i_{0}},y_{j_{0}}}\frac{x_{i_{0}}^{n-1}}{y_{j_{0}}} \ll B\log(B)\end{multline*} (en rappelant que $ x_{i_{0}}\leqslant B^{\frac{1}{3}} $ sur le support de $ \poids(x_{i_{0}},y_{j_{0}}) $).

On a donc que \[ \card(E_{1,2}(B))=O(B^{n}\log(B)), \]
et on en d\'eduit que \begin{align*}
N(B)=\card E(B) & = \card E_{1}(B)+\card E_{2}(B)+\card E_{3}(B)+O(B^{n}\log(B)) \\ &=\frac{3}{2}N'(B)+O(B^{n}\log(B)).
\end{align*}
Par cons\'equent, nous avons \'etabli : \[  N(B)=\frac{1}{2}\prod_{p \; \premier}\sigma_{p}\frac{\sigma_{\infty}}{2}B^{n}\log(B)^{2}(1+o(1)). \]

Remarquons que \begin{align*}
 \sigma_{i_{0},j_{0}}^{(1)}& =\int_{\RR^{3n}}\frac{\chi_{i_{0},j_{0}}^{(1)}(\ss',\tt',\hat{\uu})}{|s_{j_{0}}|}d\ss' d\tt' d\hat{\uu} \\ & =\sum_{k_{0}\neq j_{0}}\int_{|u_{k_{0}}|=\max | u_{k}|}\frac{\chi_{i_{0},j_{0}}^{(1)}(\ss',\tt',\hat{\uu})}{|s_{j_{0}}|}d\ss' d\tt' d\hat{\uu} 
\end{align*}
En effectuant un changement de variables $ u_{k}=|u_{k_{0}}|v_{k} $ pour tout $ k\neq k_{0} $ on obtient alors (si $ \vv' $ d\'esigne $ \vv $ priv\'e de la coordonn\'ee en $ k_{0} $) :
\begin{align*}
 \sigma_{i_{0},j_{0}}^{(1)}& =\sum_{k_{0}\neq j_{0}}\int_{-1}^{1}|u_{k_{0}}|^{n-1}du_{k_{0}}\int_{\RR^{3n-1}}\frac{\chi_{i_{0},j_{0}}^{(1)'}(\ss',\tt',\hat{\vv})}{|s_{j_{0}}|}d\ss' d\tt' d\hat{\vv}'  =\frac{2}{n}\sigma_{i_{0},j_{0}}^{(1)'}
\end{align*}
o\`u \[ \sigma_{i_{0},j_{0}}^{(1)'}=\sum_{k_{0}\neq j_{0}}\int_{\RR^{3n-1}}\frac{\chi_{i_{0},j_{0}}^{(1)'}(\ss',\tt',\hat{\vv})}{|s_{j_{0}}|}d\ss' d\tt' d\hat{\vv}' \] et \[ \chi_{i_{0},j_{0}}^{(1)'}(\ss',\tt',\hat{\vv})=\left\{\begin{array}{lllllll}
1 & \mbox{si} \; |\ss'|,|\tt'|,|\hat{\vv}|\leqslant 1 \;  \mbox{et} \;  \left| \sum_{k\neq j_{0}}\frac{s_{k}t_{k}v_{k}}{s_{j_{0}}}\right| \leqslant 1 \; \mbox{et} \; |t_{i_{0}}|\leqslant |s_{j_{0}}|& & & & &\\ 0  &\mbox{sinon} & &  &
\end{array}\right. \] Par les m\^emes calculs on \'ecrit $ \sigma_{i_{0},j_{0}}^{(2)} $ sous la forme \[ \sigma_{i_{0},j_{0}}^{(2)}= \frac{2}{n}\sigma_{i_{0},j_{0}}^{(2)'} \] o\`u \[ \sigma_{i_{0},j_{0}}^{(2)'}=\sum_{k_{0}\neq i_{0}}\int_{\RR^{3n-1}}\frac{\chi_{i_{0},j_{0}}^{(2)'}(\ss',\tt',\hat{\vv})}{|t_{i_{0}}|}d\ss' d\tt' d\hat{\vv}'. \]
On a alors \[  \sigma_{\infty}=\frac{2}{n}\sigma_{\infty}' \] o\`u \[ \sigma_{\infty}'=\sum_{i_{0},j_{0}}(\sigma_{i_{0},j_{0}}^{(1)'}+\sigma_{i_{0},j_{0}}^{(2)'}). \] On peut donc \'ecrire : \[  N(B)=\frac{1}{2n}\prod_{p \; \premier}\sigma_{p}\sigma_{\infty}'{2}B^{n}\log(B)^{2}(1+o(1)). \]

Nous pouvons finalement calculer  le cardinal \begin{multline*} \mathcal{N}(B)=\card\{(\xx,\yy,\zz)\in \ZZ_{0}^{n+1}\times \ZZ_{0}^{n+1}\times \ZZ_{0}^{n+1} \; | \; \\ (x_{0},...,x_{n}), (y_{0},...,y_{n}), (z_{0},...,z_{n}) \; \primitifs \;,  \xx.\yy.\zz =0, \; H(\xx,\yy,\zz)\leqslant B\}. \end{multline*}

On remarque en effet que si $ N_{d,e,f}(B) $ d\'esigne
\begin{multline*}\card\{(d\xx,e\yy,f\zz)\in d\ZZ_{0}^{n+1}\times e\ZZ_{0}^{n+1}\times f\ZZ_{0}^{n+1} \; | \; \\  \xx.\yy.\zz =0, \; H(d\xx,e\yy,f\zz)\leqslant B\}=N(B/def) \end{multline*} et
 \begin{multline*} \mathcal{N}_{k,l,m}(B)=\card\{(k\xx,l\yy,m\zz)\in k\ZZ_{0}^{n+1}\times l\ZZ_{0}^{n+1}\times m\ZZ_{0}^{n+1} \; | \; \\ (x_{0},...,x_{n}), (y_{0},...,y_{n}), (z_{0},...,z_{n}) \; \primitifs \;,  \xx.\yy.\zz =0, \; H(k\xx,l\yy,m\zz)\leqslant B\} \\ =\mathcal{N}(B/klm ) \end{multline*} (pour $ d,e,f,k,l,m \in \NN $), alors on a 
  \[ N_{d,e,f}(B)=\sum_{d|k}\sum_{e|l}\sum_{f|m}\mathcal{N}_{k,l,m}(B). \] Par inversions de M\"{o}bius successives appliqu\'ees \`a $ (d,e,f)=(1,1,1) $, on obtient : \begin{align*} \mathcal{N}(B)=\mathcal{N}_{(1,1,1)}(B) & =\sum_{k\in \NN^{\ast}}\mu(k)\sum_{l\in \NN^{\ast}}\mu(l) \sum_{m\in\NN^{\ast}}\mu(m)N_{k,l,m}(B)\\ & =\sum_{k,l,m \in \NN^{\ast}}\mu(k)\mu(l)\mu(m)N(B/klm) \\ & \sim_{B\ra \infty}\frac{1}{2n}\prod_{p \; \premier}\sigma_{p}\sum_{k,l,m \in \NN^{\ast}}\frac{\mu(k)\mu(l)\mu(m)}{k^{n}l^{n}m^{n}}\sigma_{\infty}'B^{n}\log(B)^{2}.
 \end{align*} 
 On remarque que \[ \sum_{k,l,m \in \NN^{\ast}}\frac{\mu(k)\mu(l)\mu(m)}{k^{n}l^{n}m^{n}}=\left(\sum_{k\in \NN^{\ast}}\frac{\mu(k)}{k^{n}}\right)^{3}, \] et que \[\sum_{k\in \NN^{\ast}}\frac{\mu(k)}{k^{n}}=\prod_{p \; \premier}\left(1-\frac{1}{p^{n}}\right). \] 
On a donc finalement d\'emontr\'e le r\'esultat suivant : 
\begin{prop}\label{conclusion}
Pour tout $ n\geqslant 35 $, on a : \[\mathcal{N}(B)=\frac{1}{2n}\prod_{p \; \premier}\sigma_{p}'\sigma_{\infty}'B^{n}\log(B)^{2}(1+o(1)), \]
lorsque $ B \ra \infty $, o\`u l'on a not\'e $ \sigma_{p}'= \left(1-\frac{1}{p^{n}}\right)^{3}\sigma_{p} $
\end{prop}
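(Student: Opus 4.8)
Le plan est de d\'eduire l'asymptotique de $\mathcal{N}(B)$ de celle de $N(B)$ par un crible de M\"obius en trois variables, comme esquiss\'e juste avant l'\'enonc\'e. Le point de d\'epart est l'asymptotique d\'ej\`a obtenue, $N(B)=\frac{1}{2n}\prod_{p\;\premier}\sigma_{p}\cdot\sigma_{\infty}'B^{n}\log(B)^{2}(1+o(1))$, \`a laquelle je joindrais la majoration uniforme $N(T)\ll T^{n}\log(T)^{2}$ valable pour tout $T\geqslant 1$ (obtenue en recollant les estimations du terme d'erreur de la section \ref{PartieTE} et la majoration triviale du terme principal, ou bien par une application directe de la m\'ethode de Heath-Brown), ainsi que le fait \'evident que $N(T)=0$ d\`es que $T<1$.

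Je commencerais par introduire, pour $d,e,f\in\NN^{\ast}$, le cardinal $N_{d,e,f}(B)$ des solutions de la forme $(d\xx,e\yy,f\zz)$ v\'erifiant $\xx.\yy.\zz=0$, $x_{\max}>0$, $y_{\max}>0$ et $H(d\xx,e\yy,f\zz)\leqslant B$. Puisque $H(d\xx,e\yy,f\zz)=def\,H(\xx,\yy,\zz)$, on a $N_{d,e,f}(B)=N(B/def)$ ; de m\^eme, en notant $\mathcal{N}_{k,l,m}(B)$ le cardinal analogue restreint aux triples primitifs, on a $\mathcal{N}_{k,l,m}(B)=\mathcal{N}(B/klm)$. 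En classant chaque point de $d\ZZ^{n+1}\times e\ZZ^{n+1}\times f\ZZ^{n+1}$ selon les pgcd de ses trois blocs de coordonn\'ees, j'obtiendrais la relation $N_{d,e,f}(B)=\sum_{d\mid k}\sum_{e\mid l}\sum_{f\mid m}\mathcal{N}_{k,l,m}(B)$, puis, par trois inversions de M\"obius successives appliqu\'ees \`a $(d,e,f)=(1,1,1)$, la formule $\mathcal{N}(B)=\sum_{k,l,m\in\NN^{\ast}}\mu(k)\mu(l)\mu(m)\,N(B/klm)$.

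Je d\'ecouperais ensuite cette somme en la partie $klm\leqslant B^{1-\eta}$ (avec $\eta>0$ petit fix\'e) et la partie $klm>B^{1-\eta}$. Dans la premi\`ere, $B/klm$ tend vers l'infini et $\log(B/klm)=\log(B)(1+o(1))$ uniform\'ement ; j'y ins\'ererais l'asymptotique de $N(\cdot)$, et apr\`es factorisation de $B^{n}\log(B)^{2}$ il resterait $\sum_{klm\leqslant B^{1-\eta}}\mu(k)\mu(l)\mu(m)(klm)^{-n}$, qui converge vers $\bigl(\sum_{k\geqslant 1}\mu(k)k^{-n}\bigr)^{3}=\prod_{p\;\premier}(1-p^{-n})^{3}$, le reste de la s\'erie \'etant $O(B^{-(1-\eta)n})$. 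Pour la seconde partie, la majoration uniforme donnerait $\sum_{klm>B^{1-\eta}}N(B/klm)\ll B^{n}\log(B)^{2}\sum_{klm>B^{1-\eta}}(klm)^{-n}\ll B^{\,n-(1-\eta)(n-1-\epsilon)}\log(B)^{2}$, qui est $o(B^{n}\log(B)^{2})$ pour $\eta,\epsilon$ assez petits et $n\geqslant 2$. En regroupant et en utilisant l'identit\'e $\prod_{p\;\premier}\sigma_{p}\cdot\prod_{p\;\premier}(1-p^{-n})^{3}=\prod_{p\;\premier}\sigma_{p}'$ avec $\sigma_{p}'=(1-p^{-n})^{3}\sigma_{p}$, j'aboutirais exactement \`a l'\'enonc\'e.

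Le principal obstacle est le contr\^ole uniforme de la queue de la s\'erie : il faut disposer d'une majoration $N(T)\ll T^{n+\epsilon}$ valable \emph{pour tout} $T$, et non de la seule asymptotique (dont le terme $o(1)$ ne dit rien lorsque $T$ reste born\'e). C'est elle qui assure l'absolue convergence de $\sum_{k,l,m}\mu(k)\mu(l)\mu(m)N(B/klm)$ et la l\'egitimit\'e du passage \`a la limite terme \`a terme. Sa v\'erification ne pr\'esente pas de difficult\'e de fond --- elle se lit sur les estimations d\'ej\`a \'etablies --- mais elle est indispensable \`a la rigueur de l'argument, le reste de la d\'emonstration se r\'eduisant \`a des calculs de convergence de s\'eries et \`a la factorisation eul\'erienne de $\sum_{k}\mu(k)k^{-n}$.
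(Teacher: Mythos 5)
Votre démonstration suit essentiellement la même voie que l'article : la triple inversion de Möbius $\mathcal{N}(B)=\sum_{k,l,m}\mu(k)\mu(l)\mu(m)N(B/klm)$ issue de $N_{d,e,f}(B)=\sum_{d\mid k,e\mid l,f\mid m}\mathcal{N}_{k,l,m}(B)$, puis la factorisation eulérienne de $\sum_k\mu(k)k^{-n}$ donnant le facteur $(1-p^{-n})^{3}$. Votre traitement est même un peu plus soigné que celui du texte, qui passe directement à l'équivalent asymptotique sans expliciter le découpage de la somme ni la majoration uniforme de $N(T)$ nécessaire pour contrôler la queue ; cette majoration est bien disponible (une borne polynomiale grossière suffit d'ailleurs pour $n\geqslant 35$), donc votre argument est correct.
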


Remarquons ce r\'esultat est en accord avec les conjectures \'enonc\'ees dans \cite{BT} et \cite{Pe}. En effet, commen\c{c}ons par d\'ecrire les singularit\'es de l'hypersurface $ X $. Ce sont les $ (\xx,\yy,\zz)\in \PP^{n}\times  \PP^{n}\times \PP^{n} $ tels que $ x_{i}y_{i}=y_{i}z_{i}=x_{i}z_{i}=0 $ pour tout $ i \in \{0,...,n\} $. On d\'efinit l'ensemble \begin{multline*} \mathcal{E}=\{ (I,J,K)\subset \{0,...,n\}^{3}\; |\;  I,J,K\neq \{0,...,n\}, \\  \; (I\cap J)\cup (J\cap K) \cup (I\cap K)=\{0,...,n\}\}.   \end{multline*}
\`a chaque triplet $ (I,J,K) $, on associe la vari\'et\'e $ V(I,J,K)=\{ (\xx,\yy,\zz)\in \PP^{n}\times  \PP^{n}\times \PP^{n} \; | \; x_{i}=y_{j}=z_{k}=0, \; \forall (i,j,k)\in I\times J\times K\} $ de dimension $ 3n-(\card(I)+\card(J)+\card(K)) $. On a alors que le lieu singulier $ X_{s} $ de l'hypersurface $ X $ est donn\'e par \[ X_{s}=\bigcup_{(I,J,K)\in \mathcal{E}}V(I,J,K), \] et on a que pour tout $ (I,J,K)\in \mathcal{E} $ \[ V(I,J,K)\setminus \bigcup_{(I,J,K)\varsubsetneq (I',J',K')\in \mathcal{E}}V(I',J',K') \] est lisse. \\

 Calculons \ a pr\'esent le cardinal de $ \mathcal{E} $ : commen\c{c}ons par remarquer que l'ensemble  $ \mathcal{F}=\{(I,J,K)\subset \{0,...,n\}^{3}\; |\;  (I\cap J)\cup (J\cap K) \cup (I\cap K)=\{0,...,n\} \}$ est de cardinal $ 4^{n+1} $ : en effet, par hypoth\`ese, chaque indice doit appartenir \`a au moins deux des ensembles $ I,J,K $, ce qui donne $ 4 $ possibilit\'es pour chaque indices $ i $ : $ i \in I,J $ et $ i\notin K $, ou  $ i \in J,K $ et $ i\notin I $, ou 
$ i \in I,K $ et $ i\notin J $ ou $ i \in I,J,K $. Pour trouver le cardinal de $ \mathcal{E} $, il faut retrancher \`a ce cardinal le nombre $ \card(\mathcal{G}) $, o\`u  \[ \mathcal{G}= \{  (I,J,K) \in  \mathcal{F} \; | \; I=\{0,...,n\} \; \ou \;J=\{0,...,n\}\; \ou \; K=\{0,...,n\} \}.\] Le nombre de triplets de $ \mathcal{F} $ tels que $ I=\{0,...,n\} $ est $ 3^{n+1} $ (par un calcul analogue \`a celui effectu\'e pr\'ec\'edemment), puis en proc\'edant de m\^eme avec les cas  $ J=\{0,...,n\} $ et $ K=\{0,...,n\} $, on trouve que le cardinal de $ \mathcal{G} $ vaut $ 3.3^{n+1} $ auquel on doit retrancher le nombre de triplets $ I,J,K $ de $ \mathcal{F} $ dont au moins deux des ensembles $ I,J,K $ sont $ \{0,...,n\} $ tout entier. Par un calcul analogue, on peut voir que ce nombre est $ 3.2^{n+1}-1 $. En r\'esum\'e, on a donc : \[ \card(\mathcal{E})=4^{n+1}-3.3^{n+1}+3.2^{n+1}-1=N_{0}. \]

On obtient une d\'esingularisation de la vari\'et\'e $ X $ en effectuant un \'eclatement en chaque $ V(I,J,K) $. Plus pr\'ecis\'ement, on commence par \'eclater $ X $ en chaque sous-vari\'et\'e de dimension $ 0 $. Puis on \'eclate la vari\'et\'e obtenue en les transform\'es stricts des sous-vari\'et\'es singuli\`eres $ V(I,J,K) $ de dimension $ 1 $ (qui sont alors lisses), etc... On note alors $ \pi : \tilde{X} \ra X $ cette d\'esingularisation. \\

On a alors que : \[ \Pic(\tilde{X})\simeq \Pic(X) \oplus \bigoplus_{(I,J,K)\in \mathcal{E}}\ZZ.\tilde{D}_{I,J,K} \simeq \ZZ^{3}\oplus \ZZ^{N_{0}}, \] ($ D_{I,J,K} $ d\'esignant le diviseur exceptionnel issu de l'\'eclatement en $ V(I,J,K) $ et que \[ C^{1}_{\eff}(\tilde{X})\simeq C^{1}_{\eff}(X) + \sum_{(I,J,K)\in \mathcal{E}}\RR^{+}.D_{I,J,K}. \]

Pour appliquer les r\'esultats \'enonc\'ees dans \cite{BT}, il convient de montrer que $ X $ est \`a singularit\'es canoniques, c'est-\`a-dire que, si $ K_{\tilde{X}} $ (resp. $ K_{X} $) d\'esigne le diviseur canonique de $ \tilde{X} $ (resp. $ X $) il existe des rationnels $ a_{I,J,K}\geqslant 0 $ tels que \[ K_{\tilde{X}}=\pi^{\ast}(K_{X})+\sum_{(I,J,K)\in \mathcal{E}}a_{I,J,K}D_{I,J,K}. \]
Pour cela nous allons utiliser \cite[Th\'eor\`eme 7.9]{K} : on consid\`ere un hyperplan g\'en\'eral de coordonn\'ees en $ \xx $ dans $ \PP^{n}\times \PP^{n}\times \PP^{n} $ passant par l'une des singularit\'es les plus petites (disons par exemple le point $ P=((1:0:...:0),(0:1:0:..:0)(0:0:1:0:...:0) $ et on note $ S $ le diviseur $ H\cap X $. D'apr\`es \cite[Th\'eor\`eme 7.9]{K}, si $ S $ est \`a singularit\'es canoniques, alors il en va de m\^eme pour $ X $. Par d\'efinition, l'hyperplan $ H $ admet une \'equation du type \[ \sum_{\substack{i=1}}^{n}a_{i}x_{i}=0, \] avec l'un des $ a_{i} $ (disons $ a_{n} $) non nul. Le diviseur $ S $ peut alors \^etre d\'ecrit comme l'hypersurface de $ \PP^{n-1}\times \PP^{n}\times \PP^{n} $ d'\'equation : \[ x_{0}y_{0}z_{0}+...+x_{n-1}y_{n-1}z_{n-1}+\left(\sum_{i=1}^{n}\frac{a_{i}}{a_{n}}x_{i}\right)y_{n}z_{n}, \] et le probl\`eme revient alors \`a montrer que cette hypersurface est \`a singularit\'e canoniques. En effectuant les m\^emes op\'erations avec des hyperplans en $ \yy $ et en $ \zz $, on se ram\`ene au cas d'une hypersurface de $ \PP^{n-1}\times \PP^{n-1}\times \PP^{n-1} $ du type : \[  x_{0}y_{0}z_{0}+...+x_{n-1}y_{n-1}z_{n-1}+\left(\sum_{i\neq 0}\frac{a_{i}}{a_{n}}x_{i}\right)\left(\sum_{j\neq 1}\frac{b_{j}}{b_{n}}y_{j}\right)\left(\sum_{k\neq 2}\frac{c_{k}}{c_{n}}z_{k}\right). \] En r\'eit\'erant ce proc\'ed\'e $ n-2 $ fois on v\'erifie que l'on se ram\`ene \`a une surface de $ \PP^{1}\times \PP^{1}\times \PP^{1} $ d'\'equation du type
\[ \alpha x_{0}y_{2}z_{0}+ \beta x_{1}y_{1}z_{0}+\gamma x_{1}y_{2}z_{2}+\delta x_{1}y_{2}z_{0}, \] ($ ((x_{0}:x_{1}),(y_{1}:y_{2}),(z_{0}:z_{2})) $ d\'esignant un triplet de coordonn\'ees homog\`enes de $ \PP^{1}\times \PP^{1}\times \PP^{1} $). On v\'erifie que cette surface admet une unique singularit\'e $ P_{0}=((1:0),(0:1),(0:1)) $ de multiplicit\'e $ 2 $. On obtient une d\'esingularisation de cette hypersurface en effectuant un \'eclatement de $ S $ en $ P_{0} $. On note $ Y=\PP^{1}\times \PP^{1}\times \PP^{1}  $  et $ \pi : \tilde{Y}\ra Y $ (resp. $ \pi : \tilde{S}\ra S $) l'\'eclatement de $ Y $ (resp. $ S $) en $ P_{0} $. La vari\'et\'e $ S $ est alors une hypersurface lisse de $ \tilde{Y} $.  Puisque $ Y $ est lisse et que $ \codim_{Y}(P_{0})=3 $, on a d'apr\`es \cite[Exercice 8.5]{Ha} : \begin{equation}\label{sing1} K_{\tilde{Y}}=\pi^{\ast}K_{Y}+2E, \end{equation} $ E $ d\'esignant le diviseur exceptionnel issu de l'\'eclatement en $ P_{0} $. D'autre part, on a \begin{equation}\label{sing2} \pi^{\ast}S=\tilde{S}+2E\end{equation} et la formule d'adjonction donne : \begin{equation}\label{sing3} K_{\tilde{S}}=(K_{\tilde{Y}}+\tilde{S})|_{\tilde{S}},\; \; \;   K_{S}=(K_{Y}+S) |_{S}.\end{equation} En regroupant les r\'esultats \eqref{sing1}, \eqref{sing2} et \eqref{sing3}, on obtient \[ K_{\tilde{S}}=\pi^{\ast}K_{S}, \] donc $ \pi : \tilde{S}\ra S $ est cr\'epant, donc $ S $ est \`a singularit\'es canoniques et ainsi $ X $ est \`a singularit\'es canoniques.\\

Nous pouvons \`a pr\'esent appliquer les r\'esultats de la section $ 3 $ de \cite{BT} : en effet si l'on note $ L=\OO_{Y}(1,1,1) $, on a alors $ \omega_{X}^{-1}=\OO(n,n,n)=L^{\otimes n} $, et puisque d'autre part on a $  \omega_{\tilde{X}}\simeq \pi^{\ast}\omega_{X}\otimes\bigotimes_{(I,J,K)\in \mathcal{E}} \mathcal{L}(a_{I,J,K}D_{I,J,K}) $ avec $ a_{I,J,K}\geqslant 0 $, la vari\'et\'e $ X $ est une vari\'et\'e $ \QQ $-Fano canonique (cf. \cite[D\'efinition 2.2.10]{BT}), ce qui permet alors de conclure (voir \cite[Exemple 2.3.7]{BT}) que La vari\'et\'e $ X $ est $ L $-primitive. Les r\'esultats de la section $ 3 $ de \cite{BT}, sugg\`erent alors que la formule asymptitique attendue pour $ \mathcal{N}(B) $ est \[ \mathcal{N}(B)=c_{L}(X)B^{\alpha_{L}(X)}\log(B)^{\beta_{L}(X)-1}(1+o(1)) \] (voir \cite[page 323]{BT}), et le r\'esultat obtenu en \ref{conclusion} semble \^etre compatible avec cette formule. En effet on a $ \alpha_{L}(X)=n $, $ \beta_{L}(X)=3 $ (cf. \cite[Remarque 2.3.12]{BT}) et   \[ c_{L}(X)=\frac{\gamma_{L}(X)}{\alpha_{L}(X)(\beta_{L}(X)-1)!}\delta_{L}(X)\tau_{L}(X) \] o\`u \[ \delta_{L}(X)=\card H^{1}(\Gal(\bar{\QQ}/\QQ,\Pic(X))=1, \] \[ \tau_{L}(X)=n^{3}\sigma_{\infty}'\prod_{p \; \premier}\sigma_{p}' \] (voir par exemple \cite{S2} pour les calculs), et \[ \gamma_{L}(X)= \int_{C^{1}_{\eff}(X)^{\vee}}e^{-\langle\omega_{X}^{-1},y\rangle}dy\] o\`u  \begin{align*}
C^{1}_{\eff}(X)^{\vee} & =\{y \in \Pic(X)\otimes\RR^{\vee} \; | \; \langle x,y\rangle\geqslant 0, \; \forall x\in C^{1}_{\eff}(X)\} \\ & \simeq \{(y_{1},y_{2},y_{3}) \in \RR^{3} \; | \; x_{1}y_{1}+x_{2}y_{2}+x_{3}y_{3}\geqslant 0, \; \forall x_{1},x_{2},x_{3}\geqslant 0 \}, \end{align*} donc \[\gamma_{L}(X)= \int_{(\RR^{+})^{3}}e^{-(ny_{1}+ny_{2}+ny_{3})}dy_{1}dy_{2}dy_{3}=\left(\int_{0}^{\infty}e^{-ny}dy\right)^{3}=\frac{1}{n^{3}}  \]

et on a ainsi  \begin{align*}
 c_{L}(X) & =\frac{1}{2n}\frac{1}{n^{3}}n^{3}\sigma_{\infty}'\prod_{p \; \premier}\sigma_{p}' \\ & =\frac{1}{2n}\sigma_{\infty}'\prod_{p \; \premier}\sigma_{p}'.
\end{align*}
Le r\'esultat obtenu est donc bien en accord avec les conjectures \'enonc\'ees dans \cite{BT}.

\printnomenclature

\end{document}